\tikzstyle{empty}=[circle,draw=black!80,thick]
\tikzstyle{emptyn}=[circle,draw=black!80,fill=white,scale=0.5] 
\tikzstyle{nero}=[circle,draw=black!80,fill=black!80,thick]
\newcommand{\6}{^}
\newcommand{\Syl}{\operatorname{Syl}}
\newcommand{\Lin}{\operatorname{Lin}}
\newcommand{\Irr}{{\operatorname{Irr}}}
\newcommand{\Char}{{\operatorname{Char}}}
\newcommand{\fS}{\mathfrak{S}}
\newcommand{\triv}{\mathbbm{1}}
\newcommand{\down}{\big\downarrow}
\newcommand{\up}{\big\uparrow}
\newcommand{\tworow}[2]{\mathsf{t}_{#1}[{#2}]}
\newcommand{\hook}[2]{\mathsf{h}_{#1}[{#2}]}
\newtheorem{thm}{Theorem}[section]
\newtheorem{lemma}[thm]{Lemma}
\newtheorem{cor}[thm]{Corollary}
\newtheorem{prop}[thm]{Proposition}
\newtheorem*{thmA}{Theorem A}
\newtheorem*{thmA1}{Main Result A}
\newtheorem*{thmB}{Theorem B}
\newtheorem*{thmB1}{Main Result B}
\newtheorem*{thmC}{Theorem C}
\theoremstyle{definition}
\newtheorem{eg}[thm]{Example}
\newtheorem{defn}[thm]{Definition}
\newtheorem{rem}[thm]{Remark}
\begin{document}

\title{Sylow Branching Coefficients for symmetric groups}


\author{Eugenio Giannelli}
\address[E. Giannelli]{Dipartimento di Matematica e Informatica	U.~Dini, Viale Morgagni 67/a, Firenze, Italy}
\email{eugenio.giannelli@unifi.it}

\author{Stacey Law}
\address[S. Law]{Mathematical Institute, University of Oxford, Radcliffe Observatory, Andrew Wiles Building, Woodstock Rd, Oxford OX2 6GG, UK}
\email{swcl2@cam.ac.uk}

\begin{abstract}
Let $p\geq 5$ be a prime and let $n$ be a natural number. In this article we describe the irreducible constituents of the induced characters $\phi\up^{\fS_n}$ for arbitrary linear characters $\phi$ of a Sylow $p$-subgroup $P_n$ of the symmetric group $\fS_n$, generalising results of \cite{GL1}. By doing this, we introduce \textit{Sylow branching coefficients} for symmetric groups. 
\end{abstract}

\keywords{}


\maketitle

\section{Introduction}\label{sec:intro}
The study of the relationship between the representation theory of a finite group and that of its Sylow subgroups has been a central topic of research in the last few decades \cite{NavSylSurvey}.
For instance, Problem 12 of Brauer's article \cite{Brauer} and the famous Brauer Height Zero Conjecture \cite[Problem 23]{Brauer} ask what amount of the algebraic structure of a Sylow $p$-subgroup can be read off the character table of a finite group. 
More recently, it has been noted that given a finite group $G$ with Sylow $p$-subgroup $P$, the permutation character $\triv_P\up\6G$ controls important structural properties of the entire group $G$. For example, in \cite{MN} it is shown that $P$ is normal in $G$ if and only if all irreducible constituents of $\triv_P\up\6G$ have degree coprime to $p$. At the opposite end of the spectrum, in \cite{NTV} it is shown that when $p$ is odd then the Sylow $p$-subgroup $P$ is self-normalising if and only if $\triv_G$ is the only constituent of $\triv_P\up\6G$ of degree coprime to $p$.
 
\smallskip 
 
The abundant and deep knowledge on the representation theory of symmetric groups often allows one to ask (and sometimes answer) questions about this family of finite groups which are out of reach for arbitrary groups. 
This is the case for the study of the interplay between characters of $\mathfrak{S}_n$ and those of its Sylow $p$-subgroup $P_n$. 
Our main object of investigation is restriction of irreducible characters of $\mathfrak{S}_n$ to $P_n$ and their decomposition into irreducible constituents. 
In particular, for $\chi\in\mathrm{Irr}(\fS_n)$ we let 
$$\chi\down_{P_n}=\sum_{\phi\in\mathrm{Irr}(P_n)}Z_{\phi}^\chi\phi,$$
where each \textit{Sylow branching coefficient} $Z_{\phi}^\chi\in\mathbb{N}_0$ is the
multiplicity of $\phi$ as an irreducible constituent of $\chi\down_{P_n}$. 
Letting $\triv_{P_n}$ denote the trivial character of $P_n$, the positivity of $Z^\chi_{\triv_{P_n}}$ was completely described in \cite{GL1}, for odd primes. 
In this article we largely extend the work of \cite{GL1} by considering the entire set $\mathrm{Lin}(P_n)$ of linear characters of $P_n$. In particular, for any linear character $\phi$ of $P_n$ we study  the set $\Omega(\phi)$ consisting of all those irreducible characters $\chi$ of $\mathfrak{S}_n$ such that $Z^\chi_\phi\neq 0.$

Fix a prime $p\geq 5$, let $n\in\mathbb{N}$ and $P_n\in\Syl_p(\fS_n)$. Let $\phi$ be any linear character of $P_n$. 
We recall that the set $\Irr(\fS_n)$ of ordinary irreducible characters of $\fS_n$ is naturally in bijection with the set $\mathcal{P}(n)$ of partitions of $n$, and for any $\lambda\in\mathcal{P}(n)$ we let $\chi^\lambda\in\mathrm{Irr}(\fS_n)$ be its corresponding irreducible character.
Thus we may view $\Omega(\phi)$ as a subset of $\mathcal{P}(n)$; in other words, we set
$$\Omega(\phi) = \{\lambda\in\mathcal{P}(n)\ |\ Z^{\lambda}_{\phi}\neq 0 \},$$
where, for simplicity, we used the symbol $Z^{\lambda}_{\phi}$ to denote $Z^{\chi^\lambda}_{\phi}$.

Our first result gives a complete description of $\Omega(\phi)$ for a certain family of linear characters of $P_n$ which we call \textit{quasi-trivial} linear characters (see Definition~\ref{def: quasi-trivial}). This is a broad extension of \cite[Theorem A]{GL1}. The precise statement depends on the identification of linear characters of $P_n$ with multisets of sequences, as explained in full detail in Section~\ref{sec: sylow prelims}.
For this reason we give just an informal description of our result here and postpone the full statement of Theorem A to Section~\ref{sec: sylow prelims}.

\begin{thmA1}
Let $p\geq 5$ be a prime and let $n\in\mathbb{N}$. Let $\phi\in\mathrm{Lin}(P_n)$ be quasi-trivial. Then we completely describe the set $\Omega(\phi)$ by finding explicitly all partitions contained in it.
\end{thmA1}

We repeat that the reader is referred to Theorem A in Section~\ref{sec: sylow prelims} below for the precise statement. What is important for now is that this result gives an exact characterisation of $\Omega(\phi)$ for all quasi-trivial $\phi\in\mathrm{Lin}(P_n)$. In order to describe the sets $\Omega(\phi)$ for all linear characters $\phi$ of $P_n$, we first define for any $n,t\in\mathbb{N}$ the set 
$$\mathcal{B}_n(t):=\{\lambda\in\mathcal{P}(n) : \lambda_1\le t,\ l(\lambda)\le t \}.$$ 
Here $\lambda_1$ and $l(\lambda)$ denote the length of the first row and first column of $\lambda$ respectively.
In particular, $\mathcal{B}_n(t)$ is the set of partitions of $n$ whose Young diagrams fit inside a $t\times t$ square grid.
Finally, we let $m(\phi)$ and $M(\phi)$ be the integers defined as follows:
$$m(\phi) := \max\{t\in\mathbb{N} \mid \mathcal{B}_{n}(t) \subseteq\Omega(\phi)\}\quad \text{and}\quad  M(\phi) := \min\{t\in\mathbb{N} \mid \Omega(\phi)\subseteq\mathcal{B}_{n}(t)\}.$$

The main result of this article is Theorem B, which is stated in Section~\ref{sec: sylow prelims}. Here we avoid the necessary technical notation and limit ourselves to describing it informally. 

\begin{thmB1}
Let $p\geq 5$ be a prime and $n$ a natural number. Let $P_n\in\Syl_p(\fS_n)$ and let $\phi$ be any linear character of $P_n$. Then the values of $m(\phi)$ and $M(\phi)$ are explicitly computed.
\end{thmB1}

Theorem B translates into a very precise description of $\Omega(\phi)$ for all $\phi\in\mathrm{Lin}(P_n)$. In fact $$\mathcal{B}_n(m(\phi))\subseteq\Omega(\phi)\subseteq\mathcal{B}_n(M(\phi)),$$ and we will show that that the values $M(\phi)$ and $m(\phi)$ are close to each other for all $\phi\in\mathrm{Lin}(P_n)$. 

We conclude with an asymptotic result, which is somewhat curious. Namely, \textit{almost all} $\chi\in\mathrm{Irr}(\mathfrak{S}_n)$ share the following property: $Z_\phi^{\chi}\neq 0$ for all $\phi\in\mathrm{Lin}(P_n)$. This is  expressed precisely in the following theorem.

\begin{thmC}
Let $p\geq 5$ be a prime and $n\in\mathbb{N}$. Let $\Omega_n$ be the intersection of all the sets $\Omega(\phi)$ where $\phi$ is free to run among the elements of $\Lin(P_n)$. Then 
$$\lim_{n\rightarrow\infty}\frac{|\Omega_n|}{|\mathcal{P}(n)|}=1.$$
\end{thmC}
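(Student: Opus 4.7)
The plan is to deduce Theorem~C from Theorem~B combined with classical asymptotic estimates for random partitions. By construction $\mathcal{B}_n(m(\phi))\subseteq\Omega(\phi)$ for every $\phi\in\mathrm{Lin}(P_n)$, so setting $\mu_n:=\min_{\phi\in\mathrm{Lin}(P_n)}m(\phi)$ we immediately obtain
$$\mathcal{B}_n(\mu_n)\;\subseteq\;\bigcap_{\phi\in\mathrm{Lin}(P_n)}\Omega(\phi)\;=\;\Omega_n.$$
It therefore suffices to show that $|\mathcal{B}_n(\mu_n)|/|\mathcal{P}(n)|\to 1$ as $n\to\infty$, and the argument then splits into two essentially independent parts.

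The first part is to extract from the explicit formula of Theorem~B a convenient lower bound on $\mu_n$. Recall that $P_n$ is a direct product of iterated wreath products indexed by the base-$p$ digits of $n$, and that linear characters of $P_n$ are encoded by multisets of sequences arising from this wreath-product decomposition. The obstructions to a partition $\lambda$ belonging to $\Omega(\phi)$ should be governed only by a small amount of data attached to $\phi$; consequently, I expect Theorem~B to imply that $\mu_n$ is enormously larger than $\sqrt{n}\log n$ (for instance of order at least $n-O(n^{1-\varepsilon})$, or perhaps even of the form $n-O(n/p)$). A case analysis on the combinatorial type of $\phi$ in the wreath-product description should make this precise.

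The second part is a standard application of the Erd\H{o}s--Lehner asymptotics: for a uniformly random $\lambda\in\mathcal{P}(n)$ the largest part $\lambda_1$ is concentrated around $\tfrac{\sqrt{6n}}{2\pi}\log n$, and by conjugation the same holds for $l(\lambda)$. Hence, for any sequence $t_n$ with $t_n/(\sqrt{n}\log n)\to\infty$ one has $|\mathcal{B}_n(t_n)|/|\mathcal{P}(n)|\to 1$, and the lower bound on $\mu_n$ from the first part will satisfy this by a very wide margin.

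The main obstacle is the first part: converting the intricate formula of Theorem~B into a single clean lower bound on $m(\phi)$ valid for every $\phi\in\mathrm{Lin}(P_n)$. A priori a pathological linear character could produce an unexpectedly small $m(\phi)$, so a careful enumeration of the combinatorial types of $\phi$ (organised by the wreath-product decomposition of $P_n$) is required. The second part, by comparison, reduces to a short quotation of well-known results.
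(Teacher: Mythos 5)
Your proposal follows essentially the same route as the paper: the paper's proof of Theorem C likewise combines the inclusion $\mathcal{B}_n(t)\subseteq\Omega_n$ for a uniform lower bound $t$ on $m(\phi)$ (obtained from Theorem B via $N(s)\ge m(s)>\tfrac{p^k}{2}$, giving simply $m(\phi)>\tfrac{n}{2}$ rather than the sharper $n-O(n/p)$ you anticipate) with the Erd\H{o}s--Lehner estimate that $\lambda_1$ and $l(\lambda)$ are of order $\sqrt{n}\log n$ for almost all $\lambda\vdash n$. So your outline is correct; the only difference is that the crude bound $\tfrac{n}{2}$ already suffices, making the case analysis you worry about unnecessary.
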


\medskip

\begin{rem}
This article studies Sylow branching coefficients for primes $p\geq 5$. 
We take the opportunity to discuss the main differences and the obstacles that arise when studying this problem for the primes $2$ and $3$. 

When $p=2$ the restriction of irreducible characters to Sylow $2$-subgroups was studied for its connections to the McKay Conjecture in \cite{G}, \cite{GKNT} and \cite{INOT}.
In this setting the situation is completely different from the uniform description given in this article for all primes $p\geq 5$. 
For instance, $\Omega(\phi)$ is no longer closed under conjugation in general, and at the time of writing we do not even have a conjecture for the structure of the sets $\Omega(\phi)$, where $\phi$ is a linear character of a Sylow $2$-subgroup of $\mathfrak{S}_n$. Indeed, a first open problem in this line of investigation for the prime $2$ is to determine $\Omega(\triv_{P_n})$. 
A second question is whether Theorem C would still hold for the prime $2$. 

For the prime $3$ the set $\Omega(\triv_{P_n})$ was described in \cite{GL1}.
However, Theorems A and B of the present article (see Section \ref{sec: sylow prelims}) do not hold for $p=3$. We refer the reader to Example \ref{counterexample} for concrete cases of linear characters $\phi$ of Sylow $3$-subgroups such that the structure of $\Omega(\phi)$ does not agree with the one described by Theorems A and B. 
Despite the many differences in the behaviour of Sylow branching coefficients for the prime $3$ compared to bigger primes, the ideas contained in this article, together with new and more sophisticated ad hoc combinatorial machinery, should allow us to address the problem and to obtain results similar to Main Result A and Main Result B for the prime $3$ as well. 
In particular, we conjecture that Theorem C holds for $p=3$.
This specific and very technical analysis will be the subject of future investigation. 
\hfill$\lozenge$
\end{rem}

\medskip

The article is structured as follows. In Section~\ref{sec:prelims} we recall basic facts in the representation theory of symmetric groups and their Sylow $p$-subgroups. This allows us to formally state our main results (Theorems A and B). 
In Section~\ref{sec:LR} we set up the combinatorial background necessary to tackle the main proofs. 
In Section~\ref{sec: new omegas prime power}, we consider the case where $n$ is a power of the prime $p$, and in Section~\ref{sec: new omegas arbitrary n} we extend the scope of our results to arbitrary natural numbers $n$. 

\medskip

\subsection*{Acknowledgements}
The second author was supported by an LMS Early Career Fellowship at the University of Oxford.
We are indebted to Jason Long for his help and for useful conversations on Section 3. We also thank Mark Wildon for helpful discussions on \cite{DPW} and  \cite{PW}.

\medskip

\section{Notation, Preliminaries, and statements of Theorems A and B}\label{sec:prelims}
Throughout this article, $p$ is a prime and $P_n$ denotes a Sylow $p$-subgroup of $\fS_n$. 
For a finite group $G$, let $\Char(G)$ denote the set of ordinary characters of $G$, and let $\Irr(G)$ (resp.~$\Lin(G)$) denote the subset of those which are irreducible (resp.~linear).
For $m$ a natural number, let $[m]$ denote the set $\{1,2,\dotsc,m\}$ and $[\overline{m}]$ the set $\{0,1,\dotsc,m-1\}$. 

\subsection{Wreath products}\label{sec:wreath}
We use this section to fix the notation for representations and characters of wreath products. This will be important for our study of Sylow $p$-subgroups of $\mathfrak{S}_n$.

Let $G$ be a finite group, $n\in\mathbb{N}$ and $H\le\fS_n$. We denote by $G^{\times n}$ the direct product of $n$ copies of $G$. The natural action of $\fS_n$ on the direct factors of $G^{\times n}$ induces an action of $\fS_n$ (and therefore of $H\le \fS_n$) via automorphisms of $G^{\times n}$, giving the wreath product $G\wr H:= G^{\times n}\rtimes H$. We sometimes refer to $G^{\times n}$ as the base group of the wreath product $G\wr H$. 

As in \cite[Chapter 4]{JK}, we denote the elements of $G\wr H$ by $(g_1,\dotsc,g_n;h)$ for $g_i\in G$ and $h\in H$. Let $V$ be a $\mathbb{C}G$--module and suppose it affords the character $\phi$. 
We let $V^{\otimes n}:=V\otimes\cdots\otimes V$ ($n$ copies) be the corresponding $\mathbb{C}G^{\times n}$--module. The left action of $G\wr H$ on $V^{\otimes n}$ defined by linearly extending
$$(g_1,\dotsc,g_n;h)\ :\quad v_1\otimes \cdots\otimes v_n \longmapsto g_1v_{h^{-1}(1)}\otimes\cdots\otimes g_nv_{h^{-1}(n)}$$
turns $V^{\otimes n}$ into a $\mathbb{C}(G\wr H)$--module, which we denote by $\widetilde{V^{\otimes n}}$ (see \cite[(4.3.7)]{JK}). We denote by $\tilde{\phi}$ the character afforded by the $\mathbb{C}(G\wr H)$--module $\widetilde{V^{\otimes n}}$. For any character $\psi$ of $H$, we let $\psi$ also denote its inflation to $G\wr H$ and let
$$\mathcal{X}(\phi;\psi):=\tilde{\phi}\cdot\psi$$
be the  character of $G\wr H$ obtained as the
product of $\tilde{\phi}$ and $\psi$.
Moreover, if $K\le G$ and $L\le H$ then we have by the definition of $\mathcal{X}(\phi;\psi)$ that
$$ \mathcal{X}(\phi;\psi)\down^{G\wr H}_{K\wr L} = \mathcal{X}(\phi\down^G_K; \psi\down^H_L).$$

\smallskip

Let $\phi\in\Irr(G)$ and let $\phi^{\times n}:=\phi\times\cdots\times\phi$ denote the corresponding irreducible character of $G^{\times n}$, and observe that $\tilde{\phi}\in\Irr(G\wr H)$ is an extension of $\phi^{\times n}$. For $\psi\in\Irr(H)$ we have that $\mathcal{X}(\phi;\psi)\in\Irr(G\wr H\mid \phi^{\times n})$, the set of irreducible characters $\chi$ of $G\wr H$ whose restriction $\chi\down_{G^{\times n}}$ contains $\phi^{\times n}$ as an irreducible constituent. Indeed, Gallagher's Theorem \cite[Corollary 6.17]{IBook} gives
$$\Irr(G\wr H\mid \phi^{\times n}) = \{\mathcal{X}(\phi;\psi)\mid \psi\in\Irr(H)\}.$$
More generally, if $K\le G$ and $\psi\in\Irr(K)$ then we denote by $\Irr(G\mid\psi)$ the set of characters $\chi\in\Irr(G)$ such that $\psi$ is an irreducible constituent of the restriction $\chi\down_K$.

\smallskip

We also record the form of irreducible characters of $G\wr C_p$ where $C_p$ is a cyclic group of prime order $p$ (see \cite[Chapter 4]{JK}): every $\psi\in\Irr(G\wr C_p)$ is either of the form
\begin{itemize}\setlength\itemsep{0.5em}
\item[(a)] $\psi=\phi_{i_1}\times\cdots\times\phi_{i_p}\up^{G\wr C_p}_{G^{\times p}}$, where $\phi_{i_1},\dotsc,\phi_{i_p}\in\Irr(G)$ are not all equal; or
\item[(b)] $\psi=\mathcal{X}(\phi;\theta)$ for some $\phi\in\Irr(G)$ and $\theta\in\Irr(C_p)$.
\end{itemize}

When (a) holds, $\psi\down_{G^{\times p}}$ is the sum of the $p$ irreducible characters of $G^{\times p}$ whose $p$ factors are a cyclic permutation of $\phi_{i_1},\dotsc,\phi_{i_p}$. When (b) holds, $\psi\down_{G^{\times p}} = \phi^{\times p}\cdot\theta(1) = \phi^{\times p}$.

\begin{lemma}[Associativity of wreath products]\label{lem:assoc}
	Let $l,m,n\in\mathbb{N}$ and let $G\le\fS_l$, $H\le\fS_m$ and $I\le\fS_n$. Then $(G\wr H)\wr I \cong G\wr (H\wr I)$. Moreover, given $\alpha\in\Char(G)$, $\beta\in\Char(H)$, $\gamma\in\Char(I)$ and $x\in (G\wr H)\wr I$, we have that 
	$\mathcal{X}\big(\mathcal{X}(\alpha;\beta);\gamma \big)(x) = \mathcal{X}\big(\alpha; \mathcal{X}(\beta;\gamma) \big)(\theta(x)).$ Here $\theta$ denotes the canonical isomorphism between $(G\wr H)\wr I$ and $G\wr (H\wr I)$.
\end{lemma}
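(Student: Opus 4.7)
The plan has two parts. First, I would make the canonical isomorphism $\theta$ explicit; second, I would deduce the character identity by identifying the underlying modules on each side.

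For the isomorphism, I would write an element of $(G\wr H)\wr I$ uniquely as $x=(((g_{i,1},\dotsc,g_{i,m});h_i)_{i\in[n]};k)$ with $g_{i,j}\in G$, $h_i\in H$, $k\in I$, and an element of $G\wr(H\wr I)$ uniquely as $((g'_{i,j})_{(i,j)\in[n]\times[m]};((h'_i)_{i\in[n]};k'))$. I would set $\theta(x)$ to be the element with the same data, i.e.\ $g'_{i,j}=g_{i,j}$, $h'_i=h_i$, $k'=k$. Checking that $\theta$ is a group isomorphism is then a direct computation: in both groups the product rule reduces to the fact that the top-level permutation of $[n]$ and the coordinates $h_i$ act on the $G$-entries through a common combined action on the index set $[n]\times[m]$.

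For the character identity, I would recall that $\mathcal{X}(\phi;\psi)$ is afforded by the module $\widetilde{V^{\otimes m}}\otimes W_\psi$, where $V$ affords $\phi$ and $W_\psi$ is the inflation of the module affording $\psi$; by linearity in $\alpha,\beta,\gamma$ it suffices to treat honest modules. Unfolding the iterated construction, $\mathcal{X}(\mathcal{X}(\alpha;\beta);\gamma)$ is afforded by the $\mathbb{C}((G\wr H)\wr I)$-module
\[ M_1 := \widetilde{\big(\widetilde{V^{\otimes m}}\otimes W_\beta\big)^{\otimes n}}\otimes U_\gamma, \]
while $\mathcal{X}(\alpha;\mathcal{X}(\beta;\gamma))$ is afforded by the $\mathbb{C}(G\wr(H\wr I))$-module
\[ M_2 := \widetilde{V^{\otimes mn}}\otimes\big(\widetilde{W_\beta^{\otimes n}}\otimes U_\gamma\big). \]
Both vector spaces come with a canonical identification, the tensor factors from $V$ being indexed by $[n]\times[m]$ in both. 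I would then verify that the action of $x\in(G\wr H)\wr I$ on $M_1$ coincides with the action of $\theta(x)$ on $M_2$, by unravelling the defining formula
\[(g_1,\dotsc,g_r;h) : v_1\otimes\cdots\otimes v_r\longmapsto g_1 v_{h^{-1}(1)}\otimes\cdots\otimes g_r v_{h^{-1}(r)}\]
in each layer.

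The main obstacle will be the bookkeeping: tracking how each element of $I$, of $H^{\times n}$ and of the $G$-entries permutes and acts on the $mn$ tensor factors, so that the two iterated descriptions are seen to match. The essential point is that the permutation of $[n]\times[m]$ induced by $((h_i)_i;k)\in H\wr I$ is exactly the permutation induced by the top layers of $x=((\mathbf{g}_i;h_i);k)$ acting blockwise on the $n$ groups of $m$ tensor factors of $M_1$. Once this match is verified, taking traces of the common action at $x$ yields the claimed equality of character values on corresponding elements.
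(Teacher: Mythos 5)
Your proposal is correct, but it proves the lemma by a more self-contained route than the paper does. The paper's own proof is essentially a citation: the group isomorphism is declared a routine check in the notational conventions of James--Kerber, and the identity of character values is deduced from the explicit formula for character values of wreath product characters in \cite[Lemma 4.3.9]{JK}, i.e.\ the two sides are compared numerically as class functions. You instead work at the level of the underlying modules: you make $\theta$ explicit as the identity on the data $(g_{i,j},h_i,k)$, realise $\mathcal{X}\big(\mathcal{X}(\alpha;\beta);\gamma\big)$ and $\mathcal{X}\big(\alpha;\mathcal{X}(\beta;\gamma)\big)$ on canonically identified tensor spaces indexed by $[n]\times[m]$, and check that the action of $x$ on the first matches the action of $\theta(x)$ on the second before taking traces. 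This buys a proof independent of the character-value formula (and in fact gives the stronger statement that the two modules are isomorphic via $\theta$-twisting, not merely that the characters agree), at the cost of the tensor-factor bookkeeping you acknowledge; the paper's route is shorter because the trace formula has already absorbed exactly that bookkeeping. Two small points of hygiene: the appeal to ``linearity in $\alpha,\beta,\gamma$'' is unnecessary (and $\mathcal{X}$ is not linear in its first argument since it involves $V^{\otimes n}$) --- elements of $\Char$ are already afforded by honest modules, which is all you use; and when identifying $\big(V^{\otimes m}\otimes W_\beta\big)^{\otimes n}$ with $V^{\otimes mn}\otimes W_\beta^{\otimes n}$ you should note that the reordering of tensor factors is a fixed permutation and hence compatible with the actions, which is exactly the equivariance your final trace comparison needs.
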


\begin{proof}
	The first statement is a routine check, following the notational convention in \cite[\textsection 4.1]{JK}. The second statement follows from the formula for character values of wreath product given in \cite[Lemma 4.3.9]{JK}.
\end{proof}

In particular, associativity for three terms as in Lemma~\ref{lem:assoc} then gives associativity for $k$-term wreath products for all $k\ge 3$, and so from now on we simply write $G_1\wr G_2\wr\cdots\wr G_k$ without internal parentheses when referring to such groups, and identify corresponding elements under such isomorphisms.

\smallskip

We record some useful results describing the irreducible constituents of restrictions and inductions of characters of wreath products.
The first is entirely elementary: we state it here for the reader's convenience as it will be used later in the article. 

\begin{lemma}\label{lem: easy observation}
Let $G$ be a finite group and $H\le\fS_n$ for some $n\in\mathbb{N}$. Let $\chi\in\Irr(G)$. Then $$\chi^{\times n}\up^{G\wr H}_{G^{\times n}} = \sum_{\theta\in\Irr(H)} \theta(1)\cdot\mathcal{X}(\chi;\theta).$$
\end{lemma}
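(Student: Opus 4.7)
The statement is essentially a direct application of Frobenius reciprocity combined with the description of $\Irr(G\wr H \mid \chi^{\times n})$ recalled earlier in the section, so the plan is short and clean.

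First, I would observe that the character $\chi^{\times n}$ of $G^{\times n}$ is $H$-invariant (indeed $G\wr H$-invariant, since $G^{\times n}$ is normal in $G\wr H$ and $H$ permutes the $n$ identical factors of $\chi^{\times n}$). As recalled in the excerpt, $\tilde{\chi}$ is an extension of $\chi^{\times n}$ to $G\wr H$, and by Gallagher's theorem the characters in $\Irr(G\wr H \mid \chi^{\times n})$ are precisely the $\mathcal{X}(\chi;\theta) = \tilde{\chi}\cdot\theta$ for $\theta\in\Irr(H)$.

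Next, every irreducible constituent $\psi$ of the induced character $\chi^{\times n}\up^{G\wr H}_{G^{\times n}}$ satisfies, by Frobenius reciprocity, $\langle\psi\down_{G^{\times n}},\chi^{\times n}\rangle \ge 1$, so $\psi$ lies in $\Irr(G\wr H\mid\chi^{\times n})$. Hence we may write
\[
\chi^{\times n}\up^{G\wr H}_{G^{\times n}} = \sum_{\theta\in\Irr(H)} c_\theta\,\mathcal{X}(\chi;\theta)
\]
for some non-negative integers $c_\theta$. To identify $c_\theta$, use Frobenius reciprocity once more:
\[
c_\theta = \bigl\langle \chi^{\times n}\up^{G\wr H}_{G^{\times n}},\, \mathcal{X}(\chi;\theta) \bigr\rangle_{G\wr H} = \bigl\langle \chi^{\times n},\, \mathcal{X}(\chi;\theta)\down_{G^{\times n}} \bigr\rangle_{G^{\times n}}.
\]
Since $G^{\times n}$ lies in the kernel of the inflation of $\theta$ from $H$ to $G\wr H$, the inflated $\theta$ restricts to the constant function $\theta(1)$ on $G^{\times n}$, so $\mathcal{X}(\chi;\theta)\down_{G^{\times n}} = \theta(1)\cdot\chi^{\times n}$. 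Therefore $c_\theta = \theta(1)$, yielding the desired identity.

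As a sanity check one can compare degrees: the left-hand side has degree $\chi(1)^n|H|$, while the right-hand side gives $\sum_\theta \theta(1)\cdot\chi(1)^n\theta(1) = \chi(1)^n\sum_\theta\theta(1)^2 = \chi(1)^n|H|$. There is no real obstacle here; the only thing to be slightly careful about is the convention that $\theta\in\Irr(H)$ is identified with its inflation to $G\wr H$ so that $\theta$ is trivial on $G^{\times n}$, which is exactly the convention fixed in Section~\ref{sec:wreath}.
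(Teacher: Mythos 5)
Your proof is correct. The paper states this lemma without proof, describing it as entirely elementary, and your argument---all constituents of $\chi^{\times n}\up^{G\wr H}_{G^{\times n}}$ lie in $\Irr(G\wr H\mid\chi^{\times n})=\{\mathcal{X}(\chi;\theta)\}$ by Gallagher, with multiplicities $c_\theta=\langle\chi^{\times n},\mathcal{X}(\chi;\theta)\down_{G^{\times n}}\rangle=\theta(1)$ by Frobenius reciprocity and the fact that the inflated $\theta$ is trivial on $G^{\times n}$---is precisely the standard argument the authors intend.
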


Next, we record a consequence of the basic properties of characters of wreath products. A detailed proof of the following lemma can be found in \cite[Lemma 2.18]{SLThesis}.

\begin{lemma}\label{lem: 9.5}
Let $p$ be an odd prime and $G$ be a finite group. 
Let $\eta\in\Char(G)$ and $\varphi\in\Irr(G)$. If $\langle \eta,\varphi\rangle\ge 2$, then
$ \langle \mathcal{X}(\eta;\tau), \mathcal{X}(\varphi;\theta)\rangle \ge 2$
for all $\tau,\theta\in\Irr(C_p)$.
\end{lemma}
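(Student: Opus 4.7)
The plan is to compute the inner product $\langle \mathcal{X}(\eta;\tau), \mathcal{X}(\varphi;\theta)\rangle_{G\wr C_p}$ in closed form as a function of $m := \langle \eta,\varphi\rangle_G$, $p$, and whether $\tau=\theta$; the desired bound will then follow by inspection. The computation relies only on the explicit character formula for wreath products recorded in \cite[Lemma 4.3.9]{JK}.

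First, I would apply that formula to each of the characters involved. For an element $(g_1,\ldots,g_p;h)\in G\wr C_p$ with $h=1$, $\mathcal{X}(\eta;\tau)$ evaluates to $\eta(g_1)\cdots\eta(g_p)$; for each of the $p-1$ nontrivial elements $h\in C_p$ (which are all $p$-cycles), it evaluates to $\tau(h)\,\eta(g_{i_1}g_{i_2}\cdots g_{i_p})$ for a cyclic ordering of $(1,\ldots,p)$ determined by $h$, and analogously for $\mathcal{X}(\varphi;\theta)$. I would then split the normalised sum defining the inner product according to the $C_p$-component of $x$. The identity component factorises as a $p$-fold product of $\langle\eta,\varphi\rangle_G$ and contributes $m^p/p$. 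For each nontrivial $h$, the change of variables $g = g_{i_1}\cdots g_{i_p}$ is surjective $G^{\times p}\to G$ with all fibres of size $|G|^{p-1}$, so the inner sum over $G^{\times p}$ reduces to $|G|^p\langle\eta,\varphi\rangle_G$, yielding a contribution $\tfrac{1}{p}\tau(h)\overline{\theta(h)}\,m$. Summing over $h\in C_p$ and applying the orthogonality relation $\sum_{h\in C_p}\tau(h)\overline{\theta(h)} = p\,\delta_{\tau,\theta}$ then gives the closed form
$$\langle \mathcal{X}(\eta;\tau), \mathcal{X}(\varphi;\theta)\rangle_{G\wr C_p} = \frac{m^p-m}{p} + m\,\delta_{\tau,\theta},$$
which is integral by Fermat's little theorem.

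The conclusion is then immediate: when $\tau=\theta$, the right-hand side is at least $m\ge 2$; when $\tau\ne\theta$, it equals $(m^p-m)/p$, and the inequality $m^p\ge 2p+m$ is easily checked to hold (with equality only at $(m,p)=(2,3)$) for all $m\ge 2$ and all odd primes $p$. I do not anticipate a genuine obstacle: the argument is essentially a bookkeeping exercise, and the only tactical point to get right is to perform the substitution $g=g_{i_1}\cdots g_{i_p}$ before summing over $h$, so that the factor $\tau(h)\overline{\theta(h)}$ is absorbed via column orthogonality on $C_p$. It is worth noting that the hypothesis that $p$ is odd enters only in the final inequality, which genuinely fails for $p=2$ and $m=2$.
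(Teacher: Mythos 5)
Your proposal is correct, and it is worth noting that the paper itself contains no proof of Lemma~\ref{lem: 9.5}: it records the statement as a consequence of basic properties of wreath-product characters and refers to \cite[Lemma 2.18]{SLThesis} for details, so there is no in-paper argument to match step by step. Your direct computation from the character formula of \cite[Lemma 4.3.9]{JK} is sound: the identity component contributes $m^p/p$, each nontrivial $h\in C_p$ contributes $\tau(h)\overline{\theta(h)}\,m/p$ (the cycle-product substitution is indeed surjective with fibres of size $|G|^{p-1}$, and only the $\varphi$-isotypic part of $\eta$ survives the pairing), and orthogonality of the characters of $C_p$ yields the exact value $\langle\mathcal{X}(\eta;\tau),\mathcal{X}(\varphi;\theta)\rangle=\tfrac{m^p-m}{p}+m\,\delta_{\tau,\theta}$, whence the bound follows from $m^p\ge m+2p$ for $m\ge 2$ and odd $p$ (tight at $(m,p)=(2,3)$, and genuinely failing at $p=2$, $m=2$, which explains the hypothesis). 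This is strictly more information than the lemma asserts, since it gives the exact multiplicity as a function of $m=\langle\eta,\varphi\rangle$ alone. A more structural alternative, closer in spirit to the paper's toolkit, is to note that $\langle\eta,\varphi\rangle\ge 2$ lets one realise $W\oplus W$ (with $W$ affording $\varphi$) inside a module affording $\eta$, so that $\mathcal{X}(2\varphi;\tau)$ is a summand of $\mathcal{X}(\eta;\tau)$; decomposing $(W\oplus W)^{\otimes p}$ into $C_p$-orbits of index strings and invoking Lemma~\ref{lem: easy observation} gives $\mathcal{X}(2\varphi;\tau)=2\,\mathcal{X}(\varphi;\tau)+\tfrac{2^p-2}{p}\sum_{\theta'\in\Irr(C_p)}\mathcal{X}(\varphi;\theta')$, and $\tfrac{2^p-2}{p}\ge 2$ for odd $p$; that route avoids any character-value computation, while yours buys the closed formula. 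One cosmetic remark: the identity $\sum_{h\in C_p}\tau(h)\overline{\theta(h)}=p\,\delta_{\tau,\theta}$ is the first orthogonality relation for the characters $\tau,\theta$ of $C_p$ rather than column orthogonality, though for a cyclic group the distinction is harmless.
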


We conclude with a result that will be used frequently later in the article. 

\begin{lemma}\label{lem: 9.6}
Let $G$, $H$ be finite groups with $H\le\fS_m$ for some $m\in\mathbb{N}$, and let $\theta\in\Irr(H)$. Let $\alpha\in\Irr(G)$ and $\Delta\in\Char(G)$ be such that $\langle\Delta,\alpha\rangle=1$. Then for any $\beta\in\Irr(H)$,
$$\langle\mathcal{X}(\Delta;\theta), \mathcal{X}(\alpha;\beta)\rangle = \langle\theta,\beta\rangle = \delta_{\theta,\beta}.$$
\end{lemma}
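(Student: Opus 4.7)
The plan is to decompose $\Delta = \alpha + \Delta'$, where $\Delta'$ is either zero or a character of $G$ with $\langle\Delta',\alpha\rangle = 0$, and then to show that $\tilde\Delta - \tilde\alpha$ is a genuine character of $G\wr H$ whose restriction to the base group $G^{\times m}$ contains no copy of $\alpha^{\times m}$. After multiplying by the inflation of $\theta$, the same will remain true of $\mathcal{X}(\Delta;\theta) - \mathcal{X}(\alpha;\theta)$, and combining this with the description of $\Irr(G\wr H\mid\alpha^{\times m})$ as $\{\mathcal{X}(\alpha;\psi)\mid\psi\in\Irr(H)\}$ (recalled in Section~\ref{sec:wreath} via Gallagher's Theorem) will force
\[
\langle\mathcal{X}(\Delta;\theta),\mathcal{X}(\alpha;\beta)\rangle = \langle\mathcal{X}(\alpha;\theta),\mathcal{X}(\alpha;\beta)\rangle = \delta_{\theta,\beta},
\]
which is exactly the lemma (the right-hand side coincides with $\langle\theta,\beta\rangle$ because $\theta,\beta\in\Irr(H)$).

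To realise the decomposition $\tilde\Delta = \tilde\alpha + \eta$ at the module level, I would let $A$ and $V'$ afford $\alpha$ and $\Delta'$, and set $V = A\oplus V'$, so that $V$ affords $\Delta$. Inside $V^{\otimes m}$ the submodule $A^{\otimes m}$ is preserved by $G^{\times m}$ (acting factorwise) and by $H$ (permuting identical tensor factors), hence is a $\mathbb{C}(G\wr H)$-submodule of $\widetilde{V^{\otimes m}}$; a $G\wr H$-invariant complement $U$ is obtained by grouping the remaining tensor summands $W_S = \bigotimes_{i\in S}A\otimes\bigotimes_{j\notin S}V'$ (for $S\subsetneq[m]$) according to the $H$-orbits on proper subsets of $[m]$. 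By the defining formula of the tilde construction, $A^{\otimes m}$ viewed as a $\mathbb{C}(G\wr H)$-module is precisely $\widetilde{A^{\otimes m}}$ and therefore affords $\tilde\alpha$, so that $\eta$, defined as the character of $U$, is an honest character of $G\wr H$. A short inner product calculation using $\eta\down_{G^{\times m}} = \Delta^{\times m} - \alpha^{\times m}$ then yields
\[
\langle\eta\down_{G^{\times m}},\alpha^{\times m}\rangle_{G^{\times m}} = \langle\Delta,\alpha\rangle^m - 1 = 0,
\]
and since $\theta$ is inflated from $H$ it acts on $G^{\times m}$ as the scalar $\theta(1)$, so $(\eta\cdot\theta)\down_{G^{\times m}} = \theta(1)\cdot\eta\down_{G^{\times m}}$ likewise contains no copy of $\alpha^{\times m}$.

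Finally, for the normal inclusion $G^{\times m}\trianglelefteq G\wr H$, every irreducible constituent of $\eta\cdot\theta$ restricts to a character of $G^{\times m}$ whose constituents all appear in $(\eta\cdot\theta)\down_{G^{\times m}}$; since $\alpha^{\times m}$ is not among them, no such constituent lies in $\Irr(G\wr H\mid\alpha^{\times m}) = \{\mathcal{X}(\alpha;\psi)\mid\psi\in\Irr(H)\}$. In particular $\mathcal{X}(\alpha;\beta)$ is not a constituent of $\eta\cdot\theta$, and the displayed equalities then follow from Gallagher's Theorem. The one step requiring real care is verifying that $\eta$ is an actual character rather than merely a virtual difference, which is precisely why I would exhibit the complement $U$ explicitly as a direct summand of $V^{\otimes m}$ rather than try to argue purely at the level of characters.
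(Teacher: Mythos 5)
Your proof is correct, but it takes a genuinely different route from the paper's. You establish the structural fact that $\mathcal{X}(\Delta;\theta)-\mathcal{X}(\alpha;\theta)$ is an honest character of $G\wr H$ none of whose irreducible constituents lies over $\alpha^{\times m}$: you realise $\tilde\Delta=\tilde\alpha+\eta$ at the module level via the $G\wr H$-stable decomposition $V^{\otimes m}=A^{\otimes m}\oplus\bigl(\bigoplus_{S\subsetneq[m]}W_S\bigr)$, check $\langle\eta\down_{G^{\times m}},\alpha^{\times m}\rangle=\langle\Delta,\alpha\rangle^m-1=0$ (this is where the hypothesis $\langle\Delta,\alpha\rangle=1$, rather than $\ge 1$, is used), and then invoke Gallagher's description of $\Irr(G\wr H\mid\alpha^{\times m})$ to get $\langle\mathcal{X}(\Delta;\theta),\mathcal{X}(\alpha;\beta)\rangle=\langle\mathcal{X}(\alpha;\theta),\mathcal{X}(\alpha;\beta)\rangle=\delta_{\theta,\beta}$. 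The paper instead argues purely at the level of characters by a counting/sandwich argument: setting $\zeta=\mathcal{X}(\Delta;\theta)$ and $c=\langle\zeta,\mathcal{X}(\alpha;\theta)\rangle\ge 1$, it computes $\langle\zeta\down_{G^{\times m}},\alpha^{\times m}\rangle=\theta(1)$, expands this over the irreducible constituents of $\zeta$, keeps only those of the form $\mathcal{X}(\alpha;\beta)$ (each contributing $\beta(1)$ times its multiplicity), and squeezes $\theta(1)\ge\sum_\beta\beta(1)\langle\zeta,\mathcal{X}(\alpha;\beta)\rangle\ge\theta(1)c\ge\theta(1)$, forcing equality throughout. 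Both arguments rest on the same two pillars — restriction to the base group and Gallagher — but yours trades the inequality chain for an explicit module construction, and in return proves the slightly stronger statement (essentially the paper's parenthetical remark, pushed further) that the difference $\mathcal{X}(\Delta;\theta)-\mathcal{X}(\alpha;\theta)$ is a character with no constituents over $\alpha^{\times m}$; the paper's version is shorter and needs no modules. One cosmetic remark: grouping the summands $W_S$ into $H$-orbits is unnecessary — the full sum over proper subsets $S$ is already $G\wr H$-invariant — and your construction should be read as including the degenerate case $\Delta'=0$, where $\eta=0$ and the lemma is exactly Gallagher.
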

\begin{proof}
Let $\zeta = \mathcal{X}(\Delta;\theta)$ and $c=\langle\zeta,\mathcal{X}(\alpha;\theta)\rangle$. Clearly $c\ge 1$, since $\alpha$ is a constituent of $\Delta$. (More generally, if $\Delta=\psi_1+\cdots+\psi_r$ is a decomposition into irreducible constituents, then $\sum_i \mathcal{X}(\psi_i;\theta)$ is a direct summand of $\mathcal{X}(\Delta;\theta)$.) Now $\zeta\down_{G^{\times m}}=\theta(1)\cdot\Delta^{\times m}$, so
\begin{align*}
\theta(1) &= \theta(1)\cdot(\langle\Delta,\alpha\rangle)^m = \langle\zeta\down_{G^{\times m}},\alpha^{\times m}\rangle = \sum_{\gamma\in\Irr(G\wr H)}\langle\zeta,\gamma\rangle\cdot\langle\gamma\down_{G^{\times m}},\alpha^{\times m}\rangle\\
&\ge \sum_{\beta\in\Irr(H)}\langle\zeta,\mathcal{X}(\alpha;\beta)\rangle\cdot\langle\mathcal{X}(\alpha;\beta)\down_{G^{\times m}},\alpha^{\times m}\rangle =\sum_{\beta\in\Irr(H)} \beta(1)\cdot\langle\zeta,\mathcal{X}(\alpha;\beta)\rangle\\
&\ge \theta(1)\cdot c \ge \theta(1).
\end{align*}
Thus the above inequalities in fact hold with equality and the claim follows.
\end{proof}

\subsection{The Sylow $p$-subgroups of $\fS_n$}\label{sec: sylow prelims}
We recall some facts about Sylow subgroups of symmetric groups, and refer the reader to \cite[Chapter 4]{JK} for a more detailed discussion. 
In doing so, we also fix the notation necessary to state Main Results A and B from the introduction. These are Theorems A and B below.  

Fix a prime $p$, an integer $n\in\mathbb{N}$ and $P_n\in\Syl_p(\fS_n)$.
Clearly $P_1$ is the trivial group while $P_p$ is cyclic of order $p$. More generally, $P_{p^k}= (P_{p^{k-1}})^{\times p}\rtimes P_p=P_{p^{k-1}}\wr P_p\cong P_p\wr \cdots \wr P_p$ ($k$-fold wreath product) for all $k\in\mathbb{N}$.
If $n\in\mathbb{N}$ has $p$-adic expansion $n=\sum_{i=1}^t a_ip^{n_i}$, that is, $t\in\mathbb{N}$, $0\le n_1<\cdots<n_t$ and $a_i\in[p-1]$ for all $i$, then $P_n\cong (P_{p^{n_1}})^{\times a_1}\times\cdots\times (P_{p^{n_t}})^{\times a_t}$.

Next, we fix a parametrisation of the linear characters of $P_n$, for all $n$.
We begin with the case when $n$ is a power of $p$.
Let $\Irr(P_p)=\{\phi_0, \phi_1,\dotsc, \phi_{p-1}\}=\Lin(P_p)$, where $\phi_0=\triv_{P_p}$ is the trivial character of the cyclic group $P_p$. 
When $k\geq 2$, \cite[Corollary 6.17]{IBook} shows that $$\mathrm{Lin}(P_{p^k})=\bigsqcup_{\phi\in\mathrm{Lin}(P_{p^{k-1}})}\Irr(P_{p^k}\ |\ \phi^{\times p}).$$
In particular,
$\Irr(P_{p^k}\ |\ \phi^{\times p})= \{\mathcal{X}(\phi;\psi)\ |\ \psi\in\mathrm{Lin}(P_p)\}.$

\smallskip

Using the above observations, we may naturally define a bijection $s\longleftrightarrow\phi(s)$ between the set $[\overline{p}]^k$ of sequences of length $k$ with elements from $[\overline{p}]$ and the set $\mathrm{Lin}(P_{p^k})$. More precisely, if $k=0$ we let the empty sequence of length 0 correspond to the trivial character of $P_1$, and if $k=1$ we let $s=(x)$ correspond to $\phi_x$, for each $x\in [\overline{p}]$. If $k\geq 2$ then for any $s=(s_1,\ldots, s_k)\in [\overline{p}]^k$, we recursively define 
$$\phi(s):=\mathcal{X}\big(\phi(s^-); \phi(s_k)\big),$$ 
where $s^-=(s_1,\ldots, s_{k-1})\in [\overline{p}]^{k-1}$. We observe that for any $i\in[k-1]$, Lemma~\ref{lem:assoc} guarantees that $\phi(s)=\mathcal{X}(\phi(s_1,\dotsc,s_i);\phi(s_{i+1},\dotsc,s_k))$. 
Moreover, under this labelling the trivial character $\triv_{P_{p^k}}$ corresponds to the sequence $(0,\dotsc,0)\in[\overline{p}]^k$. 

\smallskip

We make the following remark for the interested reader: once we fix a natural isomorphism $P_{p^k}/P_{p^k}' \cong(C_p)^k$, our indexing of $\Lin(P_{p^k})$ can in fact be obtained equivalently from the canonical bijection $\Lin(P_{p^k}) \longleftrightarrow \mathrm{Irr}(P_{p^k}/P_{p^k}')$. This correspondence is described in detail in \cite{GLL}.

\smallskip

We can now introduce the notation necessary to state our main results.
\begin{defn}\label{def:f and eta}
Let $k\in\mathbb{N}$ and $s\in[\overline{p}]^k$.
\begin{itemize}\setlength\itemsep{0.5em}
\item[$\circ$] For $z\in\{0,1,\dotsc,k\}$, let $U_k(z)=\{s\in[\overline{p}]^k : |\{i\in[k] : s_i\ne 0\}| = z \}$.
\item[$\circ$] If $s\in U_k(z)$ where $z\ge 1$, then define $f(s)=\min\{i\in[k] \mid s_i\ne 0\}$. 
\item[$\circ$] If $s\in U_k(z)$ where $z\ge 2$, then define $g(s)=\min\{i>f(s) \mid s_i\ne 0 \}$. 
\end{itemize}
Notice that $f(s)$ and $g(s)$ are just the positions of the leftmost and second leftmost non-zero entries in the sequence $s$.
\end{defn}

Next, we divide sequences of length $k$ into types.

\begin{defn}\label{def: m(s) types}
Let $k\in\mathbb{N}$ and $s\in[\overline{p}]^k$. The \emph{type} of $s$ is the number $\tau(s)\in\{1,2,3,4\}$ defined as follows:
	$$\tau(s) = 
	\begin{cases} 
	1 & \mathrm{if}\ s=(0,0,\ldots, 0), \\ 
	2 & \mathrm{if}\ s\in U_k(1)\ \mathrm{and}\ f(s)<k,\\
	3 & \mathrm{if}\ s\in U_k(1)\ \mathrm{and}\ f(s)=k,\\
	4 & \mathrm{if}\ s\in U_k(z)\ \mathrm{for\ some}\ z\ge 2. \end{cases}$$
\end{defn}

\smallskip

Now let $n\in\mathbb{N}$ be arbitrary. Suppose $n$ has $p$-adic expansion $n=\sum_{i=1}^t a_ip^{n_i}$ where $0\le n_1<\cdots<n_t$ and $a_i\in[p-1]$.
Since $P_n\cong (P_{p^{n_1}})^{\times a_1}\times\cdots\times (P_{p^{n_t}})^{\times a_t}$, then
\begin{equation}\label{eqn: index}
\Lin(P_n) = \{\phi(\underline{\mathbf{s}})\ |\ \underline{{\mathbf{s}}}=\big(\mathbf{s}(1,1),\ldots, \mathbf{s}(1,a_1), \mathbf{s}(2,1),\ldots, \mathbf{s}(2,a_2), \ldots, \mathbf{s}(t,a_t)\big) \},
\end{equation}
where for all $i\in[t]$ and $j\in[a_i]$ we have that ${\bf{s}}(i,j)\in[\overline{p}]^{n_i}$, and 
$$\phi(\underline{\mathbf{s}}) := \phi(\mathbf{s}(1,1))\times\cdots\times\phi(\mathbf{s}(1,a_1))\times \phi(\mathbf{s}(2,1))\times\cdots\times\phi(\mathbf{s}(2,a_2))\times\cdots\times\phi(\mathbf{s}(t,a_t)).$$ 
When we write that $\phi(\underline{\mathbf{s}})$ is a linear character of $P_n$, we mean that $\underline{\mathbf{s}}$ is a sequence of sequences of the form described in (\ref{eqn: index}) above. 
To simplify notation, we let $R=\sum_{i=1}^t a_i$ and let $\{s_1,\dotsc,s_R\}$ be the multiset defined by $\{s_1,\dotsc,s_R\} = \{\mathbf{s}(i,j) \mid i\in[t],\ j\in[a_i]\}.$ In this case we say that $\phi$ corresponds to $\{s_1,\dotsc,s_R\}$.

The following definitions are crucial for our main theorems.

\begin{defn}\label{def: typephi}
	Let $n\in\mathbb{N}$. Let $\phi\in\mathrm{Lin}(P_n)$ and suppose it corresponds to the multiset of sequences $\{s_1,\ldots, s_R\}$. The \textit{type} of $\phi$ is the $4$-tuple $T(\phi)=(x_1,x_2,x_3,x_4)$, where for each $i\in [4]$ we set 
	$$x_i:=|\{j\in [R]\ |\ \tau(s_j)=i\}|.$$
\end{defn}

\smallskip

\begin{defn}\label{def: quasi-trivial}
Let $n\in\mathbb{N}$ and $\phi\in\mathrm{Lin}(P_n)\setminus\{1_{P_n}\}$. Suppose $\phi$ corresponds to the multiset of sequences $\{s_1,\ldots, s_R\}$. We say that $\phi$ is \textit{quasi-trivial} if there is at most one non-zero entry in each component sequence $s_i$.
In other words, $\phi$ is quasi-trivial if $\tau(s_i)\ne 4$ for all $i\in[R]$. Equivalently, $\phi$ is quasi-trivial if $T(\phi)=(x_1,x_2,x_3,0)$ for some $x_1,x_2,x_3\in\mathbb{N}_0$.
\end{defn}

Recall the definitions of the quantities $m(\phi)$ and $M(\phi)$ from the introduction:
$$m(\phi) = \max\{t\in\mathbb{N} \mid \mathcal{B}_{n}(t) \subseteq\Omega(\phi)\}\quad \text{and}\quad  M(\phi) = \min\{t\in\mathbb{N} \mid \Omega(\phi)\subseteq\mathcal{B}_{n}(t)\},$$
where $\mathcal{B}_n(t)=\{\lambda\in\mathcal{P}(n) : \lambda_1\le t,\ l(\lambda)\le t \}$.
We are now ready to give the precise statement of our Main Result A from the introduction.

\begin{thmA}
Let $n$ be a natural number and $p\ge 5$ be a prime. Let $\phi\in\mathrm{Lin}(P_n)\setminus\{\triv_{P_n}\}$ be quasi-trivial. Then $\Omega(\phi)=\mathcal{B}_{n}(m(\phi)),$
unless $T(\phi)=(R-1,1,0,0)$, in which case
	$$\Omega(\phi) = \mathcal{B}_n(m(\phi)) \sqcup\{(m(\phi)+1,\mu)\ |\ \mu\in\Omega(\triv_{P_{n-(m(\phi)+1)}}) \}^\circ.$$
\end{thmA}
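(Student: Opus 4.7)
The plan is to analyse $\phi\up^{\fS_n}$, whose irreducible constituents (by Frobenius reciprocity) are exactly the partitions in $\Omega(\phi)$, and to proceed by induction on the number $R$ of component sequences describing $\phi$. The inclusion $\mathcal{B}_n(m(\phi)) \subseteq \Omega(\phi)$ is immediate from the definition of $m(\phi)$, so the substance of the theorem is the reverse inclusion, together with the identification of the exceptional extra family $\{(m(\phi)+1,\mu)\}^\circ$ when $T(\phi)=(R-1,1,0,0)$. Note that for $p$ odd, $\mathrm{sgn}\down_{P_n}$ is trivial, since every element of $P_n$ is an even permutation; hence $\Omega(\phi)$ is automatically closed under partition conjugation, which is compatible with both the box $\mathcal{B}_n(m(\phi))$ and the $^\circ$-closure in the exceptional case.

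The base case $R=1$ is the prime-power case $n=p^k$: here $\phi=\phi(s)$ for a single sequence $s$ of type $2$ or $3$ (as $\phi\neq\triv_{P_n}$ is quasi-trivial), and the statement follows from the results of Section~\ref{sec: new omegas prime power}. Observe that $T(\phi)=(R-1,1,0,0)=(0,1,0,0)$ in this regime corresponds exactly to type-$2$ sequences, which is precisely where the exceptional extra constituent first arises. For the inductive step, decompose $\phi=\phi(s_1)\times\psi$ where $\phi(s_1)\in\Lin(P_{p^{n_i}})$ lies in one distinguished wreath factor and $\psi\in\Lin(P_{n-p^{n_i}})$ is quasi-trivial with $R-1$ components; transitivity of induction through $\fS_{p^{n_i}}\times\fS_{n-p^{n_i}}$ gives
$$\phi\up^{\fS_n}=\bigl(\phi(s_1)\up^{\fS_{p^{n_i}}}\times\psi\up^{\fS_{n-p^{n_i}}}\bigr)\up^{\fS_n}_{\fS_{p^{n_i}}\times\fS_{n-p^{n_i}}},$$
and the Littlewood--Richardson rule then expresses $\Omega(\phi)$ in terms of the LR product of $\Omega(\phi(s_1))$ and $\Omega(\psi)$, both controlled by the base case and the inductive hypothesis. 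A judicious choice of which $s_1$ to peel off (a trivial component if one exists, otherwise a type-$3$ before a type-$2$) keeps the case analysis manageable.

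The main obstacle is the sharp upper bound: for each $\mu\in\Omega(\phi(s_1))$ and $\nu\in\Omega(\psi)$ one must show that every partition $\lambda$ with $c^\lambda_{\mu\nu}>0$ lies in $\mathcal{B}_n(m(\phi))$, except in the exceptional case where exactly one additional family of shapes appears. This reduces to a combinatorial statement about how the maximum row and column lengths accumulate under LR multiplication of partitions fitting in boxes, which is the core of the machinery in Section~\ref{sec:LR}. In the exceptional case $T(\phi)=(R-1,1,0,0)$, only the single type-$2$ component $\phi(s_1)$ is non-trivial and contributes its base-case exceptional partition; multiplying against the known $\Omega(\triv_{P_{n-p^{n_i}}})$ from \cite{GL1} via LR yields exactly the advertised family $\{(m(\phi)+1,\mu):\mu\in\Omega(\triv_{P_{n-(m(\phi)+1)}})\}^\circ$. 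The crucial remaining point is to verify that the exceptional contribution does not propagate when $\phi$ has two or more non-trivial components: the LR product of partitions from two ``box-plus-extra-row'' shapes must smooth out and stay inside the relevant box, which is the heart of the combinatorial analysis.
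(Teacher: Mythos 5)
Your overall route coincides with the paper's: decompose $\phi$ across the direct factors of $P_n$, observe that $\Omega(\phi)$ is the $\star$-product of the component sets $\Omega(s_i)$ (Lemma~\ref{lem: omega star omega}), take the prime-power case from Section~\ref{sec: new omegas prime power} (Corollary~\ref{cor: thm A for prime power}) as the base case, induct on the number $R$ of components using the box-product machinery of Section~\ref{sec:LR}, and treat the exceptional type $T(\phi)=(R-1,1,0,0)$ by multiplying the single type-$2$ component against $\Omega(\triv_{P_m})$ from \cite{GL1}, exactly as in Lemma~\ref{lem: 10.7} and Theorem~\ref{thm:10.6}.

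However, there is a genuine gap in where you locate the difficulty, and if your plan is followed literally it does not close. You present the ``sharp upper bound'' and the claim that the exceptional layers ``do not propagate'' and ``stay inside the relevant box'' as the crux. Staying inside a box is the trivial direction: by Lemma~\ref{lem: LRfirstpart} first parts and lengths add under Littlewood--Richardson products, so $\Omega(\phi)\subseteq\mathcal{B}_n\bigl(\sum_i M(s_i)\bigr)$ is immediate. What actually has to be proved --- and what your proposal never addresses --- is that the top one or two layers of that box are \emph{completely filled}, i.e.\ that every partition with first part $N(\phi)$ or $N(\phi)-1$ lies in $\Omega(\phi)$ whenever $T(\phi)\ne(R-1,1,0,0)$; only then is $m(\phi)=N(\phi)$ and $\Omega(\phi)$ a genuine box. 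Star-multiplying the component descriptions naively gives only the sandwich $\mathcal{B}_n(N(\phi)-2)\subseteq\Omega(\phi)\subseteq\mathcal{B}_n(N(\phi))$ (for instance with two type-$2$ components), which leaves the theorem undecided on the boundary layers. In the paper these layers are filled by \emph{using} the exceptional layers of the components, not by showing them harmless: one writes an arbitrary $\mu$ of the leftover size as an LR product of partitions in $\Omega(\triv)$-sets (exploiting that sizes such as $p^{k-f}+1$ are not $p$-powers, or Lemma~\ref{lem: 10.3} when they are) and lifts via Lemma~\ref{lem: iteratedLR} to constituents $(N(\phi),\mu)$; this is the case analysis $(1,3),(2,3),(3,3),(2,2)$ and the inductive step in the proof of Theorem~\ref{thm:10.6}, with the two-sided boundary identification of Lemma~\ref{lem: 10.7} in the exceptional case. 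So the architecture is right, but the step you call ``smoothing out'' is precisely the missing argument, and it runs in the opposite direction to the one you describe.
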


Theorem A provides a complete description of $\Omega(\phi)$ for all quasi-trivial characters $\phi$, because $m(\phi)$ is explicitly determined. We omitted this in the statement of Theorem A as we are going to compute $m(\phi)$ for all $\phi\in\mathrm{Lin}(P_n)$ in Theorem B below. 

\smallskip

Given $\phi\in\mathrm{Lin}(P_{p^k})$ corresponding to $s\in [\overline{p}]^k$, we sometimes denote $m(\phi)$ and $M(\phi)$ by $m(s)$ and $M(s)$ respectively. 

\begin{defn}\label{def: new omegas N}
	Let $k\in\mathbb{N}_0$ and $s\in[\overline{p}]^k$. The integer $N(s)$ is defined as follows:
	$$N(s) = \begin{cases}
	p^k & \mathrm{if}\ \tau(s)=1,\\
	m(s)+1 & \mathrm{if}\ \tau(s)=2,\\
	m(s) & \mathrm{if}\ \tau(s)\in\{3,4\}.
	\end{cases}$$
\end{defn}

We are now ready to give the precise statement of Main Result B. 

\begin{thmB}
	Let $p\ge 5$ be a prime. Let $k\in\mathbb{N}$, and suppose $\phi=\phi(s)\in\Lin(P_{p^k})\setminus\{\triv_{P_{p^k}}\}$. Then $M(\phi) = p^k-p^{k-f(s)}$, and
	$$m(\phi) = \begin{cases} 
	p^k-p^{k-f(s)}-1 +\delta_{f(s),k} & \mathrm{if}\ \tau(s)\neq 4,\\ 
	p^k-p^{k-f(s)}-p^{k-g(s)} & \mathrm{if}\ \tau(s)= 4.
	\end{cases}$$
	
	Let $n\in\mathbb{N}$, not a power of $p$, and suppose it has $p$-adic expansion $n=\sum_{i=1}^t a_ip^{n_i}$. 
	For $\phi\in\Lin(P_n)$ with corresponding multiset $\{s_1,\ldots, s_R\}$, we have
	$$M(\phi) = \sum_{i=1}^R M(s_i) \qquad\mathrm{and}\qquad m(\phi) = \sum_{i=1}^R N(s_i).$$
\end{thmB}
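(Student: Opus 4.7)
The plan is to prove the two statements in Theorem B separately, handling the prime-power case $n=p^k$ first by induction on $k$, then leveraging the direct product structure of $P_n$ together with the Littlewood--Richardson rule to handle general $n$.

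For the prime-power case, I would fix $\phi=\phi(s)$ with $s\neq (0,\ldots,0)$ and repeatedly exploit the decomposition $P_{p^k}=P_{p^{k-1}}\wr P_p$ along with $\phi(s)=\mathcal{X}(\phi(s^-);\phi(s_k))$, or more flexibly $\phi(s)=\mathcal{X}(\phi(s_1,\ldots,s_{f(s)-1});\phi(s_{f(s)},\ldots,s_k))$ via Lemma~\ref{lem:assoc}. First I would establish the upper bound $M(\phi)\le p^k-p^{k-f(s)}$ by restricting $\chi^\lambda\down_{P_{p^k}}$ through the Young subgroup $\fS_{p^{f(s)-1}}\wr \fS_{p^{k-f(s)+1}}$: since the induced constraint on $P_{p^{k-f(s)+1}}$ is indexed by $(s_{f(s)},\ldots,s_k)$, whose first entry is non-zero, the Littlewood--Richardson rule forces $\lambda_1$ (and by conjugation symmetry $l(\lambda)$) to be bounded by $p^k-p^{k-f(s)}$. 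For the lower bound on $m(\phi)$, the types $\tau(s)\in\{2,3\}$ are quasi-trivial and Theorem A gives the formula $p^k-p^{k-f(s)}-1+\delta_{f(s),k}$ directly. The more delicate case $\tau(s)=4$ requires an inductive step: writing $\phi=\mathcal{X}(\phi(s^-);\phi(s_k))$, I apply the inductive hypothesis at level $k-1$ to obtain a large family of partitions in $\Omega(\phi(s^-))$, and then use Lemma~\ref{lem: easy observation} and Lemma~\ref{lem: 9.6} together with the LR rule to lift tensor products of such partitions through $P_{p^{k-1}}\wr P_p$ into $\Omega(\phi)$.

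For general $n$ with $p$-adic expansion $n=\sum_{i=1}^t a_ip^{n_i}$, I use the direct product $P_n\cong\prod_i P_{p^{n_i}}^{\times a_i}$. Via Frobenius reciprocity applied to the Young subgroup $\fS_{p^{n_1}}^{\times a_1}\times\cdots\times \fS_{p^{n_t}}^{\times a_t}\le \fS_n$, a partition $\lambda\in\mathcal{P}(n)$ lies in $\Omega(\phi(\underline{\mathbf{s}}))$ if and only if there exist $\mu^{(j)}\in\Omega(\phi(s_j))$ for $j\in[R]$ such that the Littlewood--Richardson coefficient $c^\lambda_{\mu^{(1)},\ldots,\mu^{(R)}}$ is positive. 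The upper bound $M(\phi)\le\sum_j M(s_j)$ is then immediate from $\lambda_1\le\sum_j\mu^{(j)}_1$ and dually $l(\lambda)\le\sum_j l(\mu^{(j)})$. The lower bound $m(\phi)\ge\sum_j N(s_j)$ is the substantive content: for each $\lambda\in\mathcal{B}_n(\sum_j N(s_j))$, I must construct an explicit tuple $(\mu^{(1)},\ldots,\mu^{(R)})$ with each $\mu^{(j)}\in\Omega(\phi(s_j))$ and $c^\lambda_{\mu^{(\cdot)}}>0$. The appearance of $N(s_j)$ rather than $m(s_j)$ in the sum for type 1 and type 2 components is precisely what allows this: by Theorem A (or its trivial-character counterpart from \cite{GL1}) the corresponding $\Omega$-sets contain hook-type partitions reaching beyond the square $\mathcal{B}_{p^{n_i}}(m(s_j))$, and these extra rows or columns can be absorbed into the LR products without violating the overall row/column bound.

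The main obstacle will be establishing the lower bound in the type 4 prime-power case, since Theorem A does not cover non-quasi-trivial characters and so $\Omega(\phi)$ cannot be read off directly. For every $\lambda\in\mathcal{B}_{p^k}(p^k-p^{k-f(s)}-p^{k-g(s)})$ I must construct an explicit LR-witness that a constituent of $\phi$-type appears in $\chi^\lambda\down_{P_{p^k}}$, combining the inductive hypothesis for $\phi(s^-)$ with the hook and two-row shapes contributed by Theorem A at smaller levels, all glued together through the outer wreath product with $P_p$. The careful bookkeeping of what is gained and lost when passing between consecutive levels of the wreath tower, and specifically the emergence of the second term $p^{k-g(s)}$ reflecting the position of the second non-zero entry, is where the combinatorial machinery of Section~\ref{sec:LR} will need to be deployed in full.
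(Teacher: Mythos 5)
Your overall architecture (prime-power case by induction up the wreath tower, then general $n$ via $\Omega(\phi)=\Omega(s_1)\star\cdots\star\Omega(s_R)$ and Littlewood--Richardson) is the same as the paper's, but several load-bearing steps are missing or would fail as written. For $M(\phi)$ in the prime-power case, your upper bound restricts through $\fS_{p^{f-1}}\wr\fS_{p^{k-f+1}}$, which is not a Young subgroup, and the Littlewood--Richardson rule does not control constituents of restrictions to wreath products (that is plethysm territory); moreover in that decomposition the base-group character is $\triv_{P_{p^{f-1}}}^{\times p^{k-f+1}}$, which imposes no constraint at all. The usable constraint comes from cutting at level $f$, where the base-group character is $\phi(0,\dotsc,0,s_f)^{\times p^{k-f}}$ and $(p^f)\notin\Omega(0,\dotsc,0,s_f)$; this is how Proposition~\ref{prop: upper bound} argues (inductively, via $(\fS_{p^{k-1}})^{\times p}$). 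You also never prove the matching lower bound $M(\phi)\ge p^k-p^{k-f(s)}$, i.e.\ you never exhibit $\lambda\in\Omega(s)$ with $\lambda_1=p^k-p^{k-f(s)}$; in the paper this is Theorem~\ref{thm: prime power M} and requires the plethysm result \cite[Corollary 9.1]{PW}. For $m(\phi)$ with $\tau(s)\in\{2,3\}$ you propose to read the formula off Theorem A, but Theorem A only asserts $\Omega(\phi)=\mathcal{B}_n(m(\phi))$ (or its exceptional variant) without giving the value of $m(\phi)$ --- that value is precisely what Theorem B supplies, and in the paper the logic runs the other way: Lemmas~\ref{lem: 001} and~\ref{lem: 9.9} compute $\Omega(s)$ for these types first, and Theorem A in the prime-power case (Corollary~\ref{cor: thm A for prime power}) is a consequence. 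So this part of your plan is circular, and the substantial work of Lemma~\ref{lem: 9.9} (resting on Proposition~\ref{prop: BinD} and Lemma~\ref{lem: DinOm}) is left undone.

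In the type-4 induction you name the right ingredients but omit the two mechanisms that make the paper's inductive step (Proposition~\ref{prop:case1}) close. First, you need an upper bound on $m(s,x)$ as well as the lower bound: the paper carries the invariant that $\Omega(s,x)\setminus\mathcal{B}_{p^{k+1}}(p\,m(s))$ contains no thin partitions, which both shows $m(s,x)$ is not larger than $p\,m(s)$ and keeps the induction alive; your proposal only constructs witnesses inside $\mathcal{B}_{p^k}(p^k-p^{k-f}-p^{k-g})$ and never rules anything out. Second, to capture the boundary partitions $\tworow{p^{k+1}}{pm}$ and $\hook{p^{k+1}}{pm}$ for \emph{every} outer label $x$ one needs the multiplicity condition $\langle\chi^{\tworow{p^k}{m}}\down_{P_{p^k}},\phi(s)\rangle\ge 2$ (and likewise for the hook) at the previous level, propagated through the wreath step by Lemma~\ref{lem: 9.5}; mere positivity does not suffice, which is exactly why Lemma~\ref{lem: 9.9}(b) tracks these multiplicities. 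Finally, for general $n$ your claim that every $\lambda\in\mathcal{B}_n\big(\sum_j N(s_j)\big)$ admits an LR-witness breaks down when $T(\phi)=(R-1,1,0,0)$: by Lemma~\ref{lem: 10.7} one has $\Omega(\phi)=\mathcal{B}_n(N(\phi)-1)\sqcup\{(N(\phi),\mu)\mid\mu\in\Omega(\triv_{P_{p^{k_i-f(s_i)}}})\}^\circ$, so for instance $(N(\phi),p^{k_i-f(s_i)}-1,1)\notin\Omega(\phi)$; this exceptional type must be isolated (as in Theorem~\ref{thm:10.6}), and when a type-4 component is present the upper bound on $m(\phi)$ again needs the no-thin-partitions argument of Theorem~\ref{thm:10.5} and Lemma~\ref{lem: 10.8}, none of which appears in your proposal.
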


Here $\delta$ denotes the Kronecker delta.
Notice also that for all odd primes $p$ and $n\in\mathbb{N}$, $m(\triv_{P_n})$ and $M(\triv_{P_n})$ have already been determined in \cite{GL1}.

\begin{rem}\label{rem: N}
Let $n$ be a natural number, $p$ a prime and $N=N_{\frak{S}_n}(P_n)$. 
The normaliser $N$ naturally acts by conjugation on the set $\mathrm{Lin}(P_n)$. If $\phi,\psi\in\mathrm{Lin}(P_n)$ are $N$--conjugate, then clearly $\phi\up^{\fS_n}=\psi\up^{\fS_n}$.
Extending a result of Navarro \cite{N}, we have recently shown in \cite{GLL} that the converse holds in the case of symmetric groups. Namely, given $\phi,\psi\in\mathrm{Lin}(P_n)$, then $\phi\up^{\fS_n}=\psi\up^{\fS_n}$ if and only if $\phi$ and $\psi$ are $N$--conjugate.

This result transcends the purpose of the present paper, and more importantly, it does not simplify our study of the sets $\Omega(\phi)$. For this reason we only briefly mention the structure (and labelling) of the $N$--orbits on $\mathrm{Lin}(P_n)$ in Remark~\ref{rem: N2} below.\hfill$\lozenge$
\end{rem}

\begin{rem}\label{rem: N2}
Let $n=p^k$ and let $s,t\in [\overline{p}]^k$. It is not too difficult to see that 
the corresponding linear characters $\phi(s)$ and $\phi(t)$ lie in the same $N$--orbit if and only if 
$$\{i\in [k]\ |\ s_i\neq 0\}=\{i\in [k]\ |\ t_i\neq 0\}.$$
In this case we write $s\equiv t$.
In particular, the set $\{0,1\}^k\subseteq [\overline{p}]^k$ labels a set of representatives for the orbits of $N$ in $\mathrm{Lin}(P_n)$.
Notice that this observation is reflected in the statements of our main results: we always consider the position of certain non-zero entries in a given sequence (i.e. $f(s), g(s)$), but we never give importance to the actual value of such entries. 

In general, let $n\in\mathbb{N}$ have $p$-adic expansion $n=\sum_{i=1}^t a_ip^{n_i}$ and let $\phi,\psi\in\mathrm{Lin}(P_n)$ be such that $$\phi = \phi(\mathbf{s}(1,1))\times\cdots\times\phi(\mathbf{s}(1,a_1))\times \phi(\mathbf{s}(2,1))\times\cdots\times\phi(\mathbf{s}(2,a_2))\times\cdots\times\phi(\mathbf{s}(k,a_k)),$$
$$\psi = \phi(\mathbf{t}(1,1))\times\cdots\times\phi(\mathbf{t}(1,a_1))\times \phi(\mathbf{t}(2,1))\times\cdots\times\phi(\mathbf{t}(2,a_2))\times\cdots\times\phi(\mathbf{t}(k,a_k)),$$  
for appropriate sequences $\mathbf{s}(i,j),\mathbf{t}(i,j)$ as described in \eqref{eqn: index}. Then $\phi$ and $\psi$ are $N$--conjugate if and only if for all $i\in[k]$, there exists $\sigma\in\mathfrak{S}_{a_i}$ such that $\mathbf{s}(i,j)\equiv\mathbf{t}(i,\sigma(j))$ for all $j\in[a_i]$.
Proofs of the aforementioned observations can be found in \cite{GLL}.\hfill$\lozenge$
\end{rem}

\medskip

\subsection{The representation theory of $\fS_n$}\label{sec: sn prelims}
For each $n\in\mathbb{N}$, $\Irr(\fS_n)$ is naturally in bijection with the set $\mathcal{P}(n)$ of all partitions of $n$. For a partition $\lambda\in\mathcal{P}(n)$, also written $\lambda\vdash n$, we denote the corresponding irreducible character of $\fS_n$ by $\chi^\lambda$. Under this natural bijection, the trivial character of $\fS_n$ corresponds to $(n)$, and the sign or alternating character to $(1^n)$ \cite[2.1.7]{JK}. When clear from context, we also abbreviate wreath product characters $\mathcal{X}(\chi^\lambda;\chi^\mu)$ involving characters of symmetric groups to simply $\mathcal{X}(\lambda;\mu)$.

\smallskip

For a partition $\lambda$, its size, length (number of parts) and conjugate are denoted by $|\lambda|$, $l(\lambda)$ and $\lambda'$ respectively. 
Given two partitions $\lambda$ and $\mu$ we denote by $\lambda+\mu$ the partition whose $i$-th part is given by $\lambda_i+\mu_i$, for all $i\in\mathbb{N}$ (here we regard $\lambda_i=0$, whenever $i>l(\lambda)$). 
The Young diagram $[\lambda]$ corresponding to the partition $\lambda=(\lambda_1,\lambda_2,\dotsc,\lambda_k)$ is the subset of the Cartesian plane defined by: 
$$[\lambda]=\{(i,j)\in\mathbb{N}\times\mathbb{N}\ |\ 1\leq i\leq k,\ 1\leq j\leq\lambda_i\},$$
where we view the diagram in matrix orientation, with the node $(1,1)$ in the upper left corner.
To be precise, the node or box $(i,j)$ is that lying in row $i$ and column $j$.
Given two partitions $\lambda$ and $\mu$ we say that $\mu\subseteq \lambda$ if $[\mu]\subseteq [\lambda]$.
Given $\lambda\in\mathcal{P}(n)$ we sometimes denote by $\lambda^-$ the subset of $\mathcal{P}(n-1)$ consisting of those partitions $\mu$ such that $\mu\subseteq \lambda$. Similarly $\lambda^+$ is the set of partitions of $n+1$ whose Young diagrams contain $[\lambda]$. 

As usual, we say that $\lambda\in\mathcal{P}(n)$ is a \textit{hook partition} if $\lambda=(n-x, 1^x)$ for some $0\leq x\leq n-1$. 
We introduce the following definition which will play an important role in this article. 

\begin{defn}\label{def:thin}
	We say that a partition $\lambda$ is \emph{thin} if $\lambda$ is a hook, $l(\lambda)\le 2$, or $\lambda_1\le 2$.
	
	For $m\le n\in\mathbb{N}$, we denote the hook partition $(m,1^{n-m})$ by $\hook{n}{m}$. When $m\ge\tfrac{n}{2}$, we denote the partition $(m,n-m)$ by $\tworow{n}{m}$. 
\end{defn}

We record some easy and useful facts.

\begin{lemma}\label{lem: MN}
	Let $p$ be a prime, let $P\in\Syl_p(\fS_p)$ and let $\psi$ be the regular character of $P$.
	Then
	$$\chi^\lambda\down^{\fS_p}_P = \begin{cases}
	m\cdot\psi & \mathrm{if}\ \lambda\ \mathrm{is\ not\ a\ hook},\\
	m'\cdot\psi+(-1)^l\cdot\triv_P & \mathrm{if}\ \lambda\ \mathrm{is\ a\ hook\ of\ leg\ length}\ l,\end{cases}$$
	where $m$ and $m'$ are integers satisfying $mp=\chi^\lambda(1)$ and $m'p+(-1)^l = \chi^\lambda(1)$ in the respective cases.
\end{lemma}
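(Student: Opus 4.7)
The proof is essentially a direct character-value computation, so my plan is to read off both sides on each conjugacy class of $P$ and identify the coefficients.

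First I would set up the relevant character values. The Sylow $p$-subgroup $P$ of $\fS_p$ is cyclic of order $p$, generated by a $p$-cycle $\sigma$. Every non-identity element of $P$ is a power of $\sigma$, hence still a $p$-cycle, and all $p$-cycles in $\fS_p$ are conjugate in $\fS_p$. Therefore $\chi^\lambda$ takes the same value on every non-identity element of $P$. The regular character $\psi$ satisfies $\psi(1)=p$ and $\psi(g)=0$ for $g\ne 1$, while $\triv_P$ is identically $1$.

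Next I would invoke the Murnaghan--Nakayama rule applied to a $p$-cycle: since removing a $p$-rim-hook from $\lambda\vdash p$ is possible if and only if $\lambda$ is a hook, we get
$$\chi^\lambda(\sigma) = \begin{cases} 0 & \text{if } \lambda \text{ is not a hook},\\ (-1)^l & \text{if } \lambda \text{ is a hook of leg length } l.\end{cases}$$
(Here removing the unique $p$-rim-hook leaves the empty partition, whose irreducible character of $\fS_0$ evaluates to $1$.)

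Now the two cases fall out. If $\lambda$ is not a hook, then $\chi^\lambda\down_P$ vanishes outside the identity, so any decomposition into irreducibles must involve each linear character of $P$ with equal multiplicity; equivalently $\chi^\lambda\down_P = m\cdot\psi$, and evaluating at $1$ gives $mp=\chi^\lambda(1)$ (so automatically $p\mid\chi^\lambda(1)$, since $m\in\mathbb{N}_0$ as a multiplicity). If $\lambda$ is a hook of leg length $l$, then the class function $\chi^\lambda\down_P-(-1)^l\triv_P$ vanishes off the identity, and (being a difference of genuine characters that is non-negative on every element apart from possibly the identity, but in any case lying in the $\mathbb{Z}$-span of $\Irr(P)$) must equal $m'\psi$ for some $m'\in\mathbb{Z}$; evaluating at the identity gives $m'p+(-1)^l=\chi^\lambda(1)$. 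There is no real obstacle here, since the argument is just reading off values at the two conjugacy-class types and solving a $2\times 2$ system; the only thing to double-check is the sign convention $(-1)^l$ coming from Murnaghan--Nakayama with the chosen leg-length normalisation, which matches \cite[2.4.7]{JK}.
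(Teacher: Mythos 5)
Your proof is correct and follows essentially the same route as the paper, which simply observes that $P$ is cyclic generated by a $p$-cycle and invokes the Murnaghan--Nakayama rule \cite[2.4.7]{JK}; you have merely filled in the routine details (the vanishing of $\chi^\lambda$ on $p$-cycles unless $\lambda$ is a hook, and the fact that a virtual character of $P$ vanishing off the identity is an integer multiple of the regular character). The only superfluous remark is the aside about non-negativity off the identity: integrality of the coefficients in the $\mathbb{Z}$-span of $\Irr(P)$ is all that is needed.
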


\begin{proof}
Since $P$ is a cyclic group generated by a cycle of length $p$, the statement follows from the Murnaghan--Nakayama Rule \cite[2.4.7]{JK}.
\end{proof}

\medskip

\subsection{The Littlewood\textendash Richardson Rule}\label{sec: LR prelims}
Let $m,n\in\mathbb{N}$ with $m<n$. For $\mu\vdash m$ and $\nu\vdash n-m$, the Littlewood\textendash Richardson rule (see \cite[Chapter 16]{J}) describes the decomposition into irreducible constituents of induced character
$$(\chi^\mu\times\chi^\nu)\up^{\fS_n}_{\fS_m\times \fS_{n-m}}.$$

Before we recall the Littlewood\textendash Richardson rule, we introduce some notation and technical definitions. By a skew shape $\gamma$ we mean a set difference of Young diagrams $[\lambda\setminus\mu]:=[\lambda]\setminus[\mu]$ for some partitions $\lambda$ and $\mu$ with $[\mu]\subsetneq[\lambda]$, and $|\gamma|:=|\lambda|-|\mu|$. By convention, the highest row of $[\lambda]$ for a partition $\lambda$ is numbered 1, but the highest row of a skew shape $\gamma=[\lambda\setminus\mu]$ need not be the highest row of $[\lambda]$.

\begin{defn}\label{def: LR type}
	Let $\lambda=(\lambda_1,\dotsc,\lambda_k)\in\mathcal{P}(n)$ and let $\mathcal{C}=(c_1,\dotsc,c_n)$ be a sequence of positive integers. We say that $\mathcal{C}$ is of \emph{weight $\lambda$} if
	$$|\{i\in\{1,\dotsc,n\}\ :\ c_i=j\}| = \lambda_j\ , \ \text{for all}\ \ j\in\{1,\dotsc,k\}.$$
	
	We say that an element $c_j$ of $\mathcal{C}$ is \emph{good} if $c_j=1$ or if
	$$|\{i\in\{1,2,\dotsc,j-1\}\ :\ c_i=c_j-1\}|>|\{i\in\{1,2,\dots,j-1\}\ :\ c_i=c_j\}|.$$
	Finally, we say that the sequence $\mathcal{C}$ is \emph{good} if $c_j$ is good for every $j\in\{1,\dotsc,n\}$.
\end{defn}

\begin{thm}[Littlewood\textendash Richardson rule]\label{thm:LR}
	Let $m,n\in\mathbb{N}$ with $m<n$. Let $\mu\vdash m$ and $\nu\vdash n-m$. Then
	$$(\chi^\mu\times\chi^\nu)\up^{\fS_n}_{\fS_m\times \fS_{n-m}} = \sum_{\lambda\vdash n} c^\lambda_{\mu\nu}\ \chi^\lambda$$
	where $c^\lambda_{\mu\nu}$ equals the number of ways to replace the nodes of $[\lambda\setminus\mu]$ by natural numbers such that
	\begin{itemize}\setlength\itemsep{0.5em}
		\item[(i)] the sequence obtained by reading the numbers from right to left, top to bottom is a good sequence of weight $\nu$;
		\item[(ii)] the numbers are weakly increasing along rows; and
		\item[(iii)] the numbers are strictly increasing down columns.
	\end{itemize}
\end{thm}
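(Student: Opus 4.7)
The plan is to translate the statement into a multiplication identity for Schur symmetric functions, and then prove the latter by reducing to the Pieri rule via the Jacobi\textendash Trudi determinantal identity. Concretely, let $\mathrm{ch}\colon \bigoplus_n R(\fS_n)\to\Lambda$ be the Frobenius characteristic map from the graded ring of virtual characters of symmetric groups to the ring of symmetric functions, sending $\chi^\lambda$ to the Schur function $s_\lambda$. This map is a graded isometry, and a direct computation with the permutation character of $\fS_n/(\fS_m\times\fS_{n-m})$ (or equivalently with the power-sum basis) shows that induction from $\fS_m\times\fS_{n-m}$ to $\fS_n$ corresponds to multiplication in $\Lambda$. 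Hence the theorem reduces to the Schur function identity
$$s_\mu s_\nu = \sum_{\lambda\vdash m+n} c^\lambda_{\mu\nu}\, s_\lambda,$$
with $c^\lambda_{\mu\nu}$ the combinatorial count in the statement.

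The first main step is to establish the Pieri rule: $s_\mu\, h_r = \sum_\lambda s_\lambda$, where the sum is over partitions $\lambda\supseteq\mu$ with $[\lambda\setminus\mu]$ a horizontal strip of size $r$. This can be checked either from the bialternant definition of $s_\lambda$ or, more combinatorially, by matching coefficients of monomials on both sides through a weight-preserving bijection on semistandard tableaux. The dual Pieri rule $s_\mu\, e_r = \sum_\lambda s_\lambda$ (vertical strips) follows symmetrically.

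The second main step is to invoke the Jacobi\textendash Trudi identity $s_\nu = \det(h_{\nu_i - i + j})_{1\le i,j\le l(\nu)}$ to rewrite $s_\mu s_\nu$ as an alternating sum over the symmetric group $\mathfrak{S}_{l(\nu)}$ of products $s_\mu\, h_{\nu_{\sigma(1)}-\sigma(1)+1}\cdots h_{\nu_{\sigma(l)}-\sigma(l)+l}$. Applying the Pieri rule iteratively to each such product expresses $s_\mu s_\nu$ as a signed sum indexed by sequences of nested skew horizontal strips building $[\lambda\setminus\mu]$. Packaging a sequence of horizontal strips as a filling of $[\lambda\setminus\mu]$ by recording in box $x$ the index of the strip containing $x$, this is precisely a signed sum over fillings satisfying conditions (ii) and (iii) of the theorem but without the good-sequence condition (i).

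The hard part will be the cancellation argument: one must produce a sign-reversing involution on the non-Yamanouchi fillings whose fixed points are exactly the LR tableaux, each carrying sign $+1$. The standard realisation uses the Bender\textendash Knuth involutions together with a modification of jeu de taquin slides that detect the first violation of the good-sequence condition (reading right to left, top to bottom) and swap a pair of entries to produce another non-Yamanouchi filling of opposite sign. Verifying that this procedure is well-defined, is genuinely an involution, and has the claimed fixed point set is the most delicate step; the combinatorial bookkeeping of where the involution acts (and that it preserves the shape $[\lambda\setminus\mu]$ and conditions (ii), (iii)) is where all the real work lies. An alternative route, closer to the treatment in \cite[Chapter 16]{J}, is to construct explicit semistandard homomorphisms between Specht modules indexed by LR tableaux and to show they form a basis of the relevant $\mathrm{Hom}$-space; the dimension count then yields the LR rule without an explicit involution, at the cost of heavier module-theoretic machinery.
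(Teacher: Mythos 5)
The paper does not prove this statement at all: it is the classical Littlewood--Richardson rule, stated with a citation to \cite[Chapter 16]{J}, and everything in the paper that uses it treats it as a black box. So there is no internal proof to compare yours against; the question is whether your outline stands on its own as a proof.

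As written it does not. The reduction to a Schur-function identity via the Frobenius characteristic, the Pieri rule, and the Jacobi--Trudi expansion are all correct and standard (one slip: with your normalisation $\mu\vdash m$, $\nu\vdash n-m$, the identity should be a sum over $\lambda\vdash n$, not $\lambda\vdash m+n$). But the entire content of the rule beyond the Pieri rule is concentrated in the cancellation step, and there you only assert that ``the standard realisation uses the Bender--Knuth involutions together with a modification of jeu de taquin slides'' with the required properties. Naming an involution is not constructing one: you must define it precisely on the fillings violating the good-sequence condition, check that it reverses the Jacobi--Trudi sign (which means it must alter the indexing permutation, i.e.\ the sequence of strip sizes, not merely relabel boxes), check that it preserves the skew shape and conditions (ii)--(iii), that it is genuinely an involution, and that its fixed points are exactly the LR fillings, each attached to the identity permutation with sign $+1$. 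This is precisely where the known arguments of this type (Gasharov, Stembridge, Remmel--Shimozono) do all of their work, and none of it appears in your proposal; the alternative route through semistandard homomorphisms, closer to \cite[Chapter 16]{J}, is likewise only gestured at. Until one of these is carried out in detail, the proposal is a plan rather than a proof.
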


Let $\nu$ be a partition. We call a way of replacing the nodes of a skew shape $\gamma$ with $|\nu|$ boxes by numbers satisfying conditions (i)--(iii) of Theorem~\ref{thm:LR} a \emph{Littlewood\textendash Richardson filling of $\gamma$ of weight $\nu$}. It is easy to see that every skew shape has at least one Littlewood\textendash Richardson filling. Moreover, the coefficients described in Theorem~\ref{thm:LR} are symmetric: $c^\lambda_{\mu\nu}=c^\lambda_{\nu\mu}$ for all partitions $\mu$, $\nu$ and all partitions $\lambda\vdash |\mu|+|\nu|$.
Let $\mathcal{LR}(\gamma)$ denote the set of all possible weights of Littlewood\textendash Richardson fillings of a skew shape $\gamma$. 

\smallskip

The following necessary condition for positivity of Littlewood--Richardson coefficients is an immediate consequence of Theorem~\ref{thm:LR}.

\begin{lemma}\label{lem: LRfirstpart}
	Let $\mu$ and $\nu$ be partitions. Let $\lambda$ be a partition of $|\mu|+|\nu|$ and suppose that $c^\lambda_{\mu\nu}>0$. Then $\lambda_1\le \mu_1+\nu_1$ and $l(\lambda)\le l(\mu)+l(\nu)$.
\end{lemma}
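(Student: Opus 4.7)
The plan is to exploit the existence of at least one Littlewood--Richardson filling of the skew shape $\gamma=[\lambda\setminus\mu]$ of weight $\nu$, guaranteed by $c^\lambda_{\mu\nu}>0$, and to read off the two desired inequalities from the defining constraints (i)--(iii) of such a filling. I would argue the first inequality by inspecting the topmost row of $\gamma$, and the second by inspecting the leftmost column of $\gamma$.

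For the bound $\lambda_1\le\mu_1+\nu_1$, I would first dispose of the trivial case $\lambda_1\le\mu_1$, and then assume $\lambda_1>\mu_1$, so that the first row of $\gamma$ contains the $\lambda_1-\mu_1>0$ cells $(1,\mu_1+1),\ldots,(1,\lambda_1)$. The rightmost of these is read first in the reverse-reading word of the filling, and so by the good-sequence condition (i) its entry must be $1$. Combined with the weak-increase-along-rows condition (ii), this forces every entry in the topmost row of $\gamma$ to equal $1$. Since the filling has weight $\nu$, at most $\nu_1$ of its entries can equal $1$, giving $\lambda_1-\mu_1\le\nu_1$.

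For the bound $l(\lambda)\le l(\mu)+l(\nu)$, I would again dispose of the trivial case $l(\lambda)\le l(\mu)$, and then assume $l(\lambda)>l(\mu)$, so that the leftmost column of $\gamma$ contains exactly the $l(\lambda)-l(\mu)$ cells $(l(\mu)+1,1),\ldots,(l(\lambda),1)$. By condition (iii) the entries occupying these cells form a strictly increasing sequence of positive integers read from top to bottom, so the bottommost entry is at least $l(\lambda)-l(\mu)$. Since a filling of weight $\nu$ uses only entries from $\{1,\ldots,l(\nu)\}$, it follows that $l(\lambda)-l(\mu)\le l(\nu)$.

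Neither step poses any serious difficulty; the only point that requires a moment's thought is the initial observation that the goodness condition (i), together with the row condition (ii), forces the whole top row of $\gamma$ to be filled with $1$'s (rather than merely bounding this row by some count of available $1$'s in a specific filling). Once that is in hand, both inequalities follow in a single line each from the weight of the filling.
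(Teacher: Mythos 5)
Your proof is correct, and it is exactly the argument the paper has in mind: the lemma is stated there as an immediate consequence of the Littlewood--Richardson rule, and your two observations (the goodness condition forces the top row of $[\lambda\setminus\mu]$ to consist of $1$'s, so $\lambda_1-\mu_1\le\nu_1$; strict increase down the first column together with entries bounded by $l(\nu)$ gives $l(\lambda)-l(\mu)\le l(\nu)$) are precisely the details being left to the reader. As a minor remark, the second inequality could also be deduced from the first via the conjugation symmetry $c^\lambda_{\mu\nu}=c^{\lambda'}_{\mu'\nu'}$, but your direct column argument is equally good.
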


We can also define \emph{iterated Littlewood--Richardson coefficients} $c^{\lambda}_{\mu^1,\dotsc,\mu^r}$ as follows. Let $r\in\mathbb{N}$ and $\mu^1,\dotsc,\mu^r$ be partitions, and let $\lambda\vdash n:= |\mu^1|+\cdots+|\mu^r|$. Then $c^{\lambda}_{\mu^1,\dotsc,\mu^r}$ is the multiplicity of $\chi^\lambda$ as a constituent of $(\chi^{\mu^1}\times\cdots\times\chi^{\mu^r}) \up^{\fS_n}_{\fS_{|\mu^1|}\times\cdots\times\fS_{|\mu^r|}}$. When $r=2$, these are the usual Littlewood--Richardson coefficients as defined above. Letting $m=|\mu^1|+\cdots+|\mu^{r-1}|$ when $r\ge 2$, it is easy to see that

$$c^{\lambda}_{\mu^1,\dotsc,\mu^r} = \sum_{\gamma\vdash m} c^{\gamma}_{\mu^1,\dotsc,\mu^{r-1}} \cdot c^\lambda_{\gamma,\mu^r}.$$
The iterated Littlewood--Richardson coefficients are also symmetric under any permutation of the partitions $\mu^1,\dotsc,\mu^r$.
An iterated Littlewood--Richardson filling of $[\lambda]$ by $\mu^1,\dotsc,\mu^r$ is a way of replacing the nodes of $[\lambda]$ by numbers defined recursively as follows: if $r=1$ then $[\lambda]=[\mu^1]$ has a unique Littlewood--Richardson filling, of weight $\mu^1$; if $r\ge 2$ then we mean an iterated Littlewood--Richardson filling of $[\gamma]$ by $\mu^1,\dotsc,\mu^{r-1}$ together with an Littlewood--Richardson filling of $[\lambda\setminus\gamma]$ of weight $\mu^r$ (for some $\gamma\subseteq\lambda$ such that this is possible).

\begin{lemma}\label{lem: iteratedLR}
	Let $a,b_1,\dotsc,b_a\in\mathbb{N}$. Let $\nu^1,\dotsc,\nu^a$ be partitions such that $b_i\ge |\nu^i|$ for all $i$ and let $c=|\nu^1|+\cdots+|\nu^a|$. 
	Let $\mu\vdash c$ and let $\lambda=(b_1+b_2+\cdots+b_a,\mu)$. Then the iterated Littlewood--Richardson coefficients $c^\lambda_{(b_1,\nu^1),\dotsc,(b_1,\nu^a)}$ and $c^\mu_{\nu^1,\dotsc,\nu^a}$ are equal.
\end{lemma}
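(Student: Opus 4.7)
The plan is to induct on $a$. For the base case $a=1$, the one-term iterated Littlewood--Richardson coefficient $c^\lambda_{(b_1,\nu^1)}$ equals $\delta_{\lambda,(b_1,\nu^1)}$, while $c^\mu_{\nu^1}=\delta_{\mu,\nu^1}$. Since $\lambda=(b_1,\mu)$, both indicators coincide. For $a\ge 2$, I would apply the standard recursion
$$c^\lambda_{(b_1,\nu^1),\dotsc,(b_a,\nu^a)} = \sum_{\gamma} c^\gamma_{(b_1,\nu^1),\dotsc,(b_{a-1},\nu^{a-1})}\cdot c^\lambda_{\gamma,(b_a,\nu^a)},$$
where $\gamma$ ranges over partitions of $B+|\nu^1|+\cdots+|\nu^{a-1}|$ with $B:=b_1+\cdots+b_{a-1}$. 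The strategy is then to argue that only one very restricted family of $\gamma$'s contributes, and for each such $\gamma$ the two LR factors match up with iterated and two-term LR coefficients on $\mu$.

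My first task is to show that every $\gamma$ giving a non-zero summand has the form $\gamma=(B,\mu')$ for some partition $\mu'$ of $|\nu^1|+\cdots+|\nu^{a-1}|$. An iterated application of Lemma~\ref{lem: LRfirstpart} to the first factor forces $\gamma_1 \le B$, since each $(b_i,\nu^i)$ has first part $b_i$ (using $b_i\ge|\nu^i|\ge \nu^i_1$). In the other direction, if $c^\lambda_{\gamma,(b_a,\nu^a)}>0$ then $[\lambda/\gamma]$ admits a Littlewood--Richardson filling of weight $(b_a,\nu^a)$; the topmost row of any such skew shape can only contain the entry $1$, because the reading word begins with the rightmost cell of that row and must be good. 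Since the weight supplies only $b_a$ ones, the topmost row of $[\lambda/\gamma]$ has at most $b_a$ cells, giving $\gamma_1 \ge \lambda_1-b_a = B$. Hence $\gamma_1=B$ and $\gamma=(B,\mu')$ for the desired $\mu'$.

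The crux of the argument is then the identity $c^\lambda_{(B,\mu'),(b_a,\nu^a)} = c^\mu_{\mu',\nu^a}$. I would prove this via a bijection between Littlewood--Richardson fillings of $[\lambda/(B,\mu')]$ of weight $(b_a,\nu^a)$ and fillings of $[\mu/\mu']$ of weight $\nu^a$. The top row of $[\lambda/(B,\mu')]$ has exactly $b_a$ cells, all forced to carry the entry $1$ by the previous paragraph; delete this row and decrement every remaining entry by $1$. The image lives on $[\mu/\mu']$ and has weight $\nu^a$. Row-weak and column-strict monotonicity are preserved trivially. For the good-sequence condition, the new reading word is obtained from the old one by stripping its initial run of $b_a$ consecutive $1$'s and shifting every remaining entry down by $1$; since all stripped entries equal $1$ and all surviving entries are at least $2$, the inequalities defining a good sequence translate verbatim. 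The inverse map prepends a top row of $b_a$ ones and increments every entry by $1$, producing a valid LR filling of $[\lambda/(B,\mu')]$.

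Combining the above with the inductive hypothesis $c^\gamma_{(b_1,\nu^1),\dotsc,(b_{a-1},\nu^{a-1})} = c^{\mu'}_{\nu^1,\dotsc,\nu^{a-1}}$ collapses the recursion to the standard iterated LR recursion $c^\mu_{\nu^1,\dotsc,\nu^a} = \sum_{\mu'} c^{\mu'}_{\nu^1,\dotsc,\nu^{a-1}}\cdot c^\mu_{\mu',\nu^a}$, closing the induction. I expect the main obstacle to be the bookkeeping in the bijection, specifically the verification of the good-sequence condition; the subtlety is that entries from the deleted top row could in principle affect the counts of $2$'s in the original reading word, but because the deleted row contributes only $1$'s (which are removed wholesale at the front of the word), the shift by $1$ is clean and the good-sequence condition is preserved in both directions.
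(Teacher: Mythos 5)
Your proposal is correct: the key step---observing that in a filling of weight $(b,\nu)$ with $b\ge|\nu|$ all the 1s are forced into the first row, then peeling off that row and decrementing the remaining entries---is exactly the bijection the paper uses. The only difference is organizational: the paper performs this bijection in one shot on iterated fillings of $[\lambda]$, whereas you run it one factor at a time inside an induction on $a$ via the recursion for iterated Littlewood--Richardson coefficients, spelling out the classification of the intermediate $\gamma$ and the good-sequence check in more detail; both routes are valid.
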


\begin{proof}
	Clearly $c^\mu_{\nu^1,\dotsc,\nu^a}\le c^\lambda_{(b_1,\nu^1),\dotsc,(b_1,\nu^a)}$, since we may take any Littlewood--Richardson filling of $[\mu]$ by $\nu^1,\dotsc,\nu^a$ and replace each number $i$ by $i+1$, then combine with the first row of $[\lambda]$ filled with all 1s to produce a Littlewood--Richardson filling of $[\lambda]$ by $(b_1,\nu^1),\dotsc,(b_a,\nu^a)$. Conversely, any such filling of $[\lambda]$ contains 1s in exactly the first row of $[\lambda]$ since $\lambda_1=b_1+\cdots+b_a$, so this process is bijective. Thus $c^\mu_{\nu^1,\dotsc,\nu^a}= c^\lambda_{(b_1,\nu^1),\dotsc,(b_1,\nu^a)}$.
\end{proof}

We conclude this section by introducing an operator that will be useful later.

\begin{defn}\label{def:star operator}
	For $n,m\in\mathbb{N}$ and $A\subseteq\mathcal{P}(n)$, $B\subseteq\mathcal{P}(m)$, let 
	$$A\star B:=\{\lambda\vdash n+m \mid \exists\ \mu\in A,\ \nu\in B\ \mathrm{such\ that}\ c^{\lambda}_{\mu\nu}>0 \}.$$
\end{defn}
It is routine to check that $\star$ is both commutative and associative.

\medskip

\section{Preliminary combinatorial results}\label{sec:LR}

In this section we prove some results concerning Littlewood--Richardson coefficients useful for later sections, which may also be of independent interest.

\begin{defn}\label{def:bnm}
	Suppose $A\subseteq\bigcup_{n\in\mathbb{N}} \mathcal{P}(n)$. We define $A':=\{\lambda'\ |\ \lambda\in A\}$ and $A^\circ:=A\cup A'$. 
\end{defn}

Recall for $n,t\in\mathbb{N}$ that 
$$\mathcal{B}_n(t) = \{\lambda\vdash n\ |\ \lambda_1\le t\ \mathrm{and}\ l(\lambda)\le t \}.$$
Thus $\mathcal{B}_n(t)$ is the set of those partitions of $n$ whose Young diagrams fit inside an $t\times t$ square grid. Notice that $\mathcal{B}_n(t)$ is closed under taking conjugates of partitions, i.e.~$\mathcal{B}_n(t)^\circ = \mathcal{B}_n(t)$. 

\smallskip

Our first aim is to show that under appropriate hypotheses on the parameters, we have $\mathcal{B}_n(t) \star\mathcal{B}_{n'}(t') = \mathcal{B}_{n+n'}(t+t')$. This is proved in Proposition~\ref{prop: B star B} below using an inductive argument.
The following lemma deals with the base step of our induction. 

\begin{lemma}\label{lem: LR base case}
	Let $t,t'\in\mathbb{N}$. Then $\mathcal{B}_{2t-1}(t)\star \mathcal{B}_{2t'-1}(t') = \mathcal{B}_{2t+2t'-2}(t+t')$.
\end{lemma}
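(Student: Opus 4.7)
The plan is to prove the two inclusions separately. The inclusion $\subseteq$ is immediate from Lemma~\ref{lem: LRfirstpart}: for any $\mu\in\mathcal{B}_{2t-1}(t)$ and $\nu\in\mathcal{B}_{2t'-1}(t')$, a partition $\lambda$ with $c^\lambda_{\mu\nu}>0$ satisfies $\lambda_1\le \mu_1+\nu_1\le t+t'$ and $l(\lambda)\le l(\mu)+l(\nu)\le t+t'$, placing $\lambda$ in $\mathcal{B}_{2(t+t')-2}(t+t')$.

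For $\supseteq$, I would proceed by induction on $L:=t+t'$, the base case $t=t'=1$ being the trivial identity $\{(1)\}\star\{(1)\}=\mathcal{B}_2(2)$. For the inductive step, given $\lambda\in\mathcal{B}_{2L-2}(L)$, I would use the conjugation symmetry $c^\lambda_{\mu\nu}=c^{\lambda'}_{\mu'\nu'}$ of Littlewood--Richardson coefficients, together with the conjugation-closure of each $\mathcal{B}_n(t)$, to reduce to the case $\lambda_1\ge l(\lambda)$. I would then take $\mu_i:=\min(\lambda_i,t)$ for $i\le t$ (the intersection of $[\lambda]$ with the top-left $t\times t$ box), and if necessary remove corners of this preliminary $\mu$ until $|\mu|=2t-1$; a short counting argument under the hypothesis $\lambda_1\ge l(\lambda)$ shows $|[\lambda]\cap[t\times t]|\ge 2t-1$, so the trimming always succeeds and produces $\mu\in\mathcal{B}_{2t-1}(t)$ with $\mu\subseteq\lambda$. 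It then remains to exhibit a Littlewood--Richardson filling of $\lambda/\mu$ whose weight $\nu$ lies in $\mathcal{B}_{2t'-1}(t')$; such a filling I would attempt to construct greedily, placing $1$s as far to the upper-right of the skew shape as possible, then $2$s, and so on, subject to the column-strict and row-weakly-increasing constraints.

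The main obstacle is this last verification. While Lemma~\ref{lem: LRfirstpart} ensures that the numerical lower bounds $\nu_1\ge \lambda_1-\mu_1$ and $l(\nu)\ge l(\lambda)-l(\mu)$ are compatible with the desired upper bounds $\nu_1,l(\nu)\le t'$ (given $\lambda\in\mathcal{B}_{2L-2}(L)$), confirming that a \emph{good} LR sequence in the sense of Definition~\ref{def: LR type} actually realises such a $\nu$ requires a combinatorial argument that is sensitive to the fine geometry of $\lambda/\mu$. A case split based on whether $\lambda$ extends beyond $[t\times t]$ mostly to the right, mostly downward, or in both directions seems unavoidable. A cleaner alternative would be first to prove the auxiliary identity $\mathcal{B}_{2t-1}(t)\star\mathcal{B}_1(1)=\mathcal{B}_{2t}(t+1)$ via a direct removable-corner analysis---using that $\lambda_1=l(\lambda)=t+1$ is impossible when $|\lambda|=2t$---and then apply it iteratively as a ``single-box peeling'', though matching up sizes with the precise statement of the lemma still requires some care.
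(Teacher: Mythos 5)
Your first inclusion and your preliminary reductions are fine: the bound via Lemma~\ref{lem: LRfirstpart} is exactly what the paper does, the reduction to $\lambda_1\ge l(\lambda)$ by conjugacy is legitimate, and your counting claim that $|[\lambda]\cap[t\times t]|\ge 2t-1$ does hold (if $l(\lambda)>t$ the first row and the first $t$ rows of the first column already give $t+(t-1)$ boxes; if $l(\lambda)\le t$ one argues row by row using $\lambda_1\le t+t'$). But the heart of the lemma is precisely the step you leave open: having fixed $\mu\subseteq\lambda$ with $\mu\in\mathcal{B}_{2t-1}(t)$, you must produce a Littlewood--Richardson filling of $[\lambda\setminus\mu]$ whose weight $\nu$ satisfies \emph{both} $\nu_1\le t'$ and $l(\nu)\le t'$. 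Compatibility of the numerical bounds from Lemma~\ref{lem: LRfirstpart} is only a necessary condition, and a greedy filling is not obviously good in the sense of Definition~\ref{def: LR type}, nor does it obviously control $l(\nu)$; moreover your $\mu$ is obtained by an \emph{arbitrary} trimming of $[\lambda]\cap[t\times t]$, so you would at least have to specify the trim and prove the filling exists for it. This is exactly the content the paper supplies, by a different organisation of the induction: it first uses the inductive hypothesis (together with $-\star\mathcal{B}_1(1)$ on both sides) to obtain $\mu\in\mathcal{B}_{2t-2}(t)$, $\nu\in\mathcal{B}_{2t'}(t'+1)$ with $c^\lambda_{\mu\nu}>0$, and then performs explicit surgery on a chosen filling --- passing one carefully selected box (the last $1$, or the last $2$, or the second-to-last $1$, depending on whether that box sits at $(1,t+1)$, $(t+1,1)$ or elsewhere) from the skew shape into $\mu$ and relabelling trailing entries --- to land in $\mathcal{B}_{2t-1}(t)\star\mathcal{B}_{2t'-1}(t')$. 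Some case analysis of this kind is unavoidable, and your proposal does not carry it out, so as it stands there is a genuine gap.

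Your ``cleaner alternative'' does not close it either. The identity $\mathcal{B}_{2t-1}(t)\star\mathcal{B}_1(1)=\mathcal{B}_{2t}(t+1)$ is true (it is essentially the paper's base-case observation that $\mathcal{B}_N(M)\star\mathcal{B}_1(1)\supseteq\mathcal{B}_{N+1}(M+1)$ when $N<2M$), but iterating single-box peeling only yields
$$\mathcal{B}_{2t+2t'-2}(t+t')\subseteq\mathcal{B}_{2t-1}(t)\star\underbrace{\mathcal{B}_1(1)\star\cdots\star\mathcal{B}_1(1)}_{2t'-1}=\mathcal{B}_{2t-1}(t)\star\mathcal{P}(2t'-1),$$
since $\mathcal{B}_1(1)^{\star(2t'-1)}$ is all of $\mathcal{P}(2t'-1)$; the $\star$ operator forgets how the peeled boxes assemble, so the constraint $\nu\in\mathcal{B}_{2t'-1}(t')$ --- which is the whole point of the statement --- is lost, and the conclusion obtained is strictly weaker than the lemma. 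Any correct argument has to keep both factors inside their respective squares simultaneously while boxes are exchanged, which is what the paper's filling surgery is designed to do.
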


\begin{proof}
	That $\mathcal{B}_{2t-1}(t)\star \mathcal{B}_{2t'-1}(t') \subseteq \mathcal{B}_{2t+2t'-2}(t+t')$ follows from Definition~\ref{def:star operator} and Lemma~\ref{lem: LRfirstpart}. 
	To prove the converse, we proceed by induction on $t+t'$. The base case follows from the observation that for any natural numbers $N$ and $M$ such that $N<2M$, we have $\mathcal{B}_N(M)\star\mathcal{B}_1(1)\supseteq \mathcal{B}_{N+1}(M+1)$: given any partition $\lambda\in\mathcal{B}_{N+1}(M+1)$, either $\lambda\in\mathcal{B}_{N+1}(M)$ and we clearly have that $\lambda\in\mathcal{B}_N(M)\star\mathcal{B}_1(1)$; or $\lambda_1=M+1$, in which case $\lambda_2<M+1$ since $N<2M$, 
	and so considering $\mu=(\lambda_1-1,\lambda_2,\dotsc)\in\lambda^-$ we obtain that $\lambda\in\mathcal{B}_N(M)\star\mathcal{B}_1(1)$ (the case where $l(\lambda)=M+1$ is dealt with similarly). 
	
	We may now assume that $t,t'\ge 2$. For the inductive step, we take as inductive hypothesis $\mathcal{B}_{2t-3}(t-1)\star\mathcal{B}_{2t'-1}(t')=\mathcal{B}_{2t+2t'-4}(t+t'-1)$. By applying $-\star\mathcal{B}_1(1)$ to both sides, 
	we find
	$$\mathcal{B}_{2t-3}(t-1)\star\mathcal{B}_{2t'}(t'+1)=\mathcal{B}_{2t+2t'-3}(t+t'),$$ 
	and then applying $\mathcal{B}_1(1)\star-$ to both sides, 
	we find
	$$\mathcal{B}_{2t-2}(t)\star\mathcal{B}_{2t'}(t'+1)=\mathcal{B}_{2t+2t'-2}(t+t'+1).$$
	Hence
	$$\mathcal{B}_{2t+2t'-2}(t+t')\subseteq\mathcal{B}_{2t+2t'-2}(t+t'+1) = \mathcal{B}_{2t-2}(t)\star\mathcal{B}_{2t'}(t'+1).$$
	Thus, letting $\lambda\in\mathcal{B}_{2t+2t'-2}(t+t')$, there exist partitions $\mu\in\mathcal{B}_{2t-2}(t)$ and $\nu\in\mathcal{B}_{2t'}(t'+1)$ such that $c^\lambda_{\mu\nu}>0$. Fix a Littlewood--Richardson filling $\mathsf{F}$ of weight $\nu$ of the skew shape $[\lambda\setminus\mu]$. 
	
	\smallskip
	
	To complete the inductive step, we construct $\hat{\mu}\in\mathcal{B}_{2t-1}(t)$ and $\hat{\nu}\in\mathcal{B}_{2t'-1}(t')$ such that $c^\lambda_{\hat{\mu}\hat{\nu}}>0$, from which we conclude therefore that $\mathcal{B}_{2t+2t'-2}(t+t')\subseteq\mathcal{B}_{2t-1}(t)\star \mathcal{B}_{2t'-1}(t')$. The main idea is to remove an appropriate box $\mathsf{b}$ from the skew shape $[\lambda\setminus\mu]$, set $[\hat{\mu}]=[\mu]\cup\mathsf{b}$, and exhibit an appropriate filling $\mathsf{F'}$ of $[\lambda\setminus\hat{\mu}]$ of weight $\hat{\nu}$. 
	
	\smallskip
	
	Since all sets considered are closed under conjugation of partitions, we may without loss of generality assume $\nu_1\ge l(\nu)$ (by taking $\lambda'$, $\mu'$ and $\nu'$ instead of $\lambda$, $\mu$ and $\nu$ if necessary). Let $k\ge 1$ be such that $\nu_1=\nu_2=\dotsc=\nu_k>\nu_{k+1}$, and let $\mathsf{x}$ denote the box containing the last 1 in the Littlewood--Richardson reading order of the filling $\mathsf{F}$ (namely right to left, top to bottom). Clearly this must lie at the top of its column and leftmost in its row in $[\lambda\setminus\mu]$, and so must be an addable box for $\mu$. 
	We split into three cases according to the position of $\mathsf{x}$.
	
	\smallskip
	
	\noindent\emph{Case (i): if the position of $\mathsf{x}$ is neither $(1,t+1)$ nor $(t+1,1)$.} 
	Since $\mathsf{x}$ is an addable box for $\mu\in\mathcal{B}_{2t-2}(t)$, setting $\hat{\mu}$ to be the partition whose Young diagram is $[\mu]\cup\mathsf{x}$ we find that $\hat{\mu}\in\mathcal{B}_{2t-1}(t)$. 
	
	If $k=1$ then the filling $\mathsf{F'}$ defined as $\mathsf{F}$ restricted to the boxes of $[\lambda\setminus\hat{\mu}]$ is a Littlewood--Richardson filling 
	of weight $\hat{\nu}:=(\nu_1-1,\nu_2,\dotsc,\nu_{l(\nu)})\in\mathcal{B}_{2t'-1}(t'+1)$. 
	Moreover, $\hat{\nu}_1=\nu_1-1\le t'+1-1=t'$, and $l(\hat{\nu})=l(\nu)\le t'$ since $l(\nu)\le \nu_1$ and $|\nu|=2t'$. Thus $\hat{\nu}\in\mathcal{B}_{2t'-1}(t')$. 
	
	If $k>1$, then $(\nu_1-1,\nu_2,\dotsc,\nu_{l(\nu)})$ is not a partition: in this case we define $\mathsf{F'}$ and $\hat{\nu}$ as follows. Let $i\in\{2,\dotsc,k\}$ and consider the position of the last $i$ in the reading order of the filling $\mathsf{F}$. By the definition of Littlewood--Richardson fillings, the last $i$ must appear later in the reading order than the last $i-1$ since $\nu_{i-1}=\nu_i$. Since this holds for all $i\in\{2,\dotsc,k\}$, the box containing the last $i$ must be the leftmost $i$ in its row in $[\lambda\setminus\mu]$ (and hence leftmost in its row), and either at the top of its column or immediately below the box containing the last $i-1$ in the reading order of $\mathsf{F}$. 
	Thus we may define a Littlewood--Richardson filling $\mathsf{F'}$ of $[\lambda\setminus\hat{\mu}]$ to be obtained from $\mathsf{F}$ by removing the 1 corresponding to the box $\mathsf{x}$, then relabelling the last $i$ in $\mathsf{F}$ by the number $i-1$, for each $2\le i\le k$. In particular, the weight of $\mathsf{F'}$ is the partition $\hat{\nu}:=(\nu_1,\dotsc,\nu_{k-1},\nu_k-1,\nu_{k+1},\dotsc,\nu_{l(\nu)})$. Moreover, $k>1$ and $l(\nu)\le \nu_1$ imply that $\nu\in\mathcal{B}_{2t'}(t')$, 
	and hence $\hat{\nu}\in\mathcal{B}_{2t'-1}(t')$. An example is shown in Figure~\ref{fig: case (i)}.
	
	\begin{figure}
		\centering
		\begin{tikzpicture}[scale=0.85, every node/.style={scale=0.8}]
		\draw [step=0.5] (0,0) grid (4.5,-0.5);
		\draw [step=0.5] (0,-0.5) grid (4,-1);
		\draw [step=0.5] (0,-1) grid (3.5,-1.5);
		\draw [step=0.5] (0,-1.5) grid (2.5,-2);
		\draw [step=0.5] (0,-2) grid (1.5,-2.5);
		
		\draw [line width=0.5mm] (0,0) -- (3,0);
		\draw [line width=0.5mm] (3,0) -- (3,-0.5);
		\draw [line width=0.5mm] (1.5,-0.5) -- (3,-0.5);
		\draw [line width=0.5mm] (1.5,-0.5) -- (1.5,-1.5);
		\draw [line width=0.5mm] (1.5,-1.5) -- (1,-1.5);
		\draw [line width=0.5mm] (1,-1.5) -- (1,-2);
		\draw [line width=0.5mm] (1,-2) -- (0,-2);
		\draw [line width=0.5mm] (0,0) -- (0,-2);
		
		\draw [pattern=north east lines, pattern color=darkgray] (1.5,-0.5) rectangle (2,-1);
		\draw (1.75,-1.25) circle [radius=0.17] node{};
		\draw (0.25,-2.25) circle [radius=0.17] node{};
		
		\draw (3.25,-0.25) node[] {$1$};
		\draw (3.75,-0.25) node[] {$1$};
		\draw (4.25,-0.25) node[] {$1$};
		\draw (1.75,-0.75) node[] {$1$};
		\draw (2.25,-0.75) node[] {$1$};
		\draw (2.75,-0.75) node[] {$2$};
		\draw (3.25,-0.75) node[] {$2$};
		\draw (3.75,-0.75) node[] {$2$};
		\draw (1.75,-1.25) node[] {$2$};
		\draw (2.25,-1.25) node[] {$2$};
		\draw (2.75,-1.25) node[] {$3$};
		\draw (3.25,-1.25) node[] {$3$};
		\draw (1.25,-1.75) node[] {$3$};
		\draw (1.75,-1.75) node[] {$3$};
		\draw (2.25,-1.75) node[] {$4$};
		\draw (0.25,-2.25) node[] {$3$};
		\draw (0.75,-2.25) node[] {$4$};
		\draw (1.25,-2.25) node[] {$5$};
		\end{tikzpicture}
		\hspace{10pt}
		\begin{tikzpicture}
		\draw (0,0.9) node[] {\LARGE$\leadsto$};
		\draw (0,0) node[] {};
		\end{tikzpicture}
		\hspace{10pt}
		\begin{tikzpicture}[scale=0.8, every node/.style={scale=0.8}]
		\draw [step=0.5] (0,0) grid (4.5,-0.5);
		\draw [step=0.5] (0,-0.5) grid (4,-1);
		\draw [step=0.5] (0,-1) grid (3.5,-1.5);
		\draw [step=0.5] (0,-1.5) grid (2.5,-2);
		\draw [step=0.5] (0,-2) grid (1.5,-2.5);
		
		\draw [line width=0.5mm] (0,0) -- (3,0);
		\draw [line width=0.5mm] (3,0) -- (3,-0.5);
		\draw [line width=0.5mm] (2,-0.5) -- (3,-0.5);
		\draw [line width=0.5mm] (2,-0.5) -- (2,-1);
		\draw [line width=0.5mm] (2,-1) -- (1.5,-1);
		\draw [line width=0.5mm] (1.5,-1) -- (1.5,-1.5);
		\draw [line width=0.5mm] (1.5,-1.5) -- (1,-1.5);
		\draw [line width=0.5mm] (1,-1.5) -- (1,-2);
		\draw [line width=0.5mm] (1,-2) -- (0,-2);
		\draw [line width=0.5mm] (0,0) -- (0,-2);
		
		\draw [pattern=north east lines, pattern color=darkgray] (1.5,-0.5) rectangle (2,-1);
		\draw [densely dotted, darkgray] (1.75,-1.25) circle [radius=0.17] node{};
		\draw [densely dotted, darkgray] (0.25,-2.25) circle [radius=0.17] node{};
		
		\draw (3.25,-0.25) node[] {$1$};
		\draw (3.75,-0.25) node[] {$1$};
		\draw (4.25,-0.25) node[] {$1$};
		\draw (2.25,-0.75) node[] {$1$};
		\draw (2.75,-0.75) node[] {$2$};
		\draw (3.25,-0.75) node[] {$2$};
		\draw (3.75,-0.75) node[] {$2$};
		\draw (1.75,-1.25) node[] {$1$};
		\draw (2.25,-1.25) node[] {$2$};
		\draw (2.75,-1.25) node[] {$3$};
		\draw (3.25,-1.25) node[] {$3$};
		\draw (1.25,-1.75) node[] {$3$};
		\draw (1.75,-1.75) node[] {$3$};
		\draw (2.25,-1.75) node[] {$4$};
		\draw (0.25,-2.25) node[] {$2$};
		\draw (0.75,-2.25) node[] {$4$};
		\draw (1.25,-2.25) node[] {$5$};
		\end{tikzpicture}
		\caption{\small{Example of case (i): $t=8$, $t'=9$, $\lambda=(9,8,7,5,3)\vdash 32$, $\mu=(6,3,3,2)$, $\nu=(5^3,2,1)$, $k=3$ and $\mathsf{F}$ as shown. On the left, the box $\mathsf{x}$ is shaded, and the last $i$ of $\mathsf{F}$ is circled for $2\le i\le k$. On the right, $\mathsf{F'}$ is shown with $\hat{\mu}=(6,4,3,2)$ and $\hat{\nu}=(5^2,4,2,1)$. The boxes containing the circled numbers have been relabelled to produce $\mathsf{F}'$.}} \label{fig: case (i)}
	\end{figure}
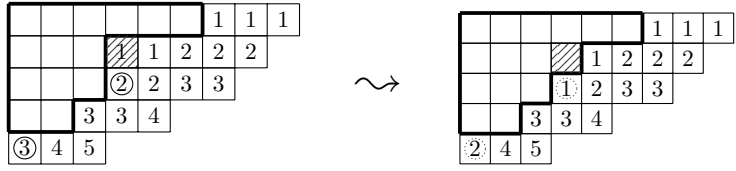
	
	Thus for all values of $k$, setting $\mathsf{b}=\mathsf{x}$ and taking $\hat{\mu}$, $\hat{\nu}$ as described above we find that $\lambda\in \mathcal{B}_{2t-1}(t)\star \mathcal{B}_{2t'-1}(t')$ as claimed.
	
	\smallskip
	
	\noindent\emph{Case (ii): if $\mathsf{x}$ lies in position $(1,t+1)$.} Then $\mu_1=t$ and $\lambda_1=\mu_1+\nu_1$ since $\mathsf{x}$ contains the last 1 of $\mathsf{F}$. Let $\mathsf{y}$ denote the box containing the last 2 in the reading order of $\mathsf{F}$; this exists as $\nu\ne(2t')$. 
	The box $\mathsf{y}$ must be leftmost in its row, as all of the 1s in $\mathsf{F}$ lie precisely in the first row of $[\lambda\setminus\mu]$. If $\mathsf{y}$ does not lie at the top of its column, it must lie immediately under a 1 in $\mathsf{F}$, from which we deduce that $\mathsf{y}$ occupies position $(2,j)$ for some $j\ge t+1$. 
	But then $\mu_2\ge t$, contradicting $|\mu|=2t-2$. Thus $\mathsf{y}$ lies at the top of its column and is an addable box for $\mu$. Moreover, $\mathsf{y}$ cannot lie in position $(t+1,1)$ or else $|\mu|\ge \mu_1+l(\mu)-1=t+t-1$. Thus, if $\mathsf{y}$ occupies position $(r,\mu_r+1)$ then $\hat{\mu}:=(\mu_1,\dotsc,\mu_{r-1},\mu_r+1,\mu_{r+1},\dotsc,\mu_{l(\mu)})\in\mathcal{B}_{2t-1}(t)$ (note $\hat{\mu}$ is well-defined since $\mu_r<\mu_{r-1}$). 
	
	Let $j\ge 2$ be such that $\nu_2=\nu_3=\dotsc=\nu_j>\nu_{j+1}$. Similarly to case (i), we define a Littlewood--Richardson filling $\mathsf{F'}$ of $[\lambda\setminus\hat{\mu}]$ to be obtained from $\mathsf{F}$ by removing the 2 corresponding to the box $\mathsf{y}$, then relabelling the last $i$ in $\mathsf{F}$ by the number $i-1$ for each $3\le i\le j$ (or no relabelling required if $j=2$). The resulting weight is $\hat{\nu}:=(\nu_1,\nu_2,\dotsc,\nu_{j-1},\nu_j-1,\nu_{j+1},\dotsc,\nu_{l(\nu)})\in\mathcal{B}_{2t'-1}(t'+1)$. Since $\lambda_1=\mu_1+\nu_1\le t+t'$, we must have $l(\nu)\le\nu_1\le t'$, 
	and so in fact $\hat{\nu}\in\mathcal{B}_{2t'-1}(t')$. 
	
	Thus setting $\mathsf{b}=\mathsf{y}$ and taking $\hat{\mu}$, $\hat{\nu}$ as described we find that $\lambda\in \mathcal{B}_{2t-1}(t)\star \mathcal{B}_{2t'-1}(t')$ as claimed.
	
	\smallskip
	
	\noindent\emph{Case (iii): if $\mathsf{x}$ lies in position $(t+1,1)$.} Let $\mathsf{z}$ denote the box containing the second-to-last 1 in the reading order of $\mathsf{F}$; this exists as $\nu\ne(1^{2t'})$. It cannot be in position $(t+1,2)$, or else $|\mu|\ge \mu'_1+\mu'_2=2t$. Thus $\mathsf{z}$ must be leftmost in its row (in some row $r<t$) and lie at the top of its column, 
	so it must be an addable box for $\mu$. Moreover, $\mathsf{z}$ cannot be in position $(1,t+1)$ as $|\mu|=2t-2$ 
	and so $\hat{\mu}:=(\mu_1,\dotsc,\mu_{r-1},\mu_r+1,\mu_{r+1},\dotsc,\mu_{l(\mu)})\in\mathcal{B}_{2t-1}(t)$. 
	
	Recall $\nu_1=\dotsc=\nu_k>\nu_{k+1}$. If $k=1$, then the filling $\mathsf{F'}$ defined as $\mathsf{F}$ restricted to the boxes of $[\lambda\setminus\hat{\mu}]$ is a Littlewood--Richardson filling of weight $\hat{\nu}:=(\nu_1-1,\nu_2,\dotsc,\nu_{l(\nu)})\in\mathcal{B}_{2t'-1}(t')$. 
	If $k>1$, then since the last 1 lies in the box $\mathsf{x}$ at position $(t+1,1)$, the last $i$ lies in position $(t+i,1)$ for each $2\le i\le k$, and notice that $\mu_2'\le t-2$ since $l(\mu)=t$. 
	Similarly to case (i), we define a Littlewood--Richardson filling $\mathsf{F'}$ of $[\lambda\setminus\hat{\mu}]$ to be obtained from $\mathsf{F}$ by removing the 1 corresponding to the box $\mathsf{z}$, then relabelling the second-to-last $i$ in $\mathsf{F}$ by the number $i-1$, for each $2\le i\le k$. The resulting weight is $\hat{\nu}:=(\nu_1,\dotsc,\nu_{k-1},\nu_k-1,\nu_{k+1},\dotsc,\nu_{l(\nu)})\in\mathcal{B}_{2t'-1}(t')$. 
	
	Thus setting $\mathsf{b}=\mathsf{z}$ and taking $\hat{\mu}$, $\hat{\nu}$ as described we find that $\lambda\in \mathcal{B}_{2t-1}(t)\star \mathcal{B}_{2t'-1}(t')$ as claimed.
\end{proof}

\begin{prop}\label{prop: B star B} 
	Let $n,n',t,t'\in\mathbb{N}$ be such that $\tfrac{n}{2}<t\le n$ and $\tfrac{n'}{2}<t'\le n'$. Then 
	$$\mathcal{B}_n(t) \star\mathcal{B}_{n'}(t') = \mathcal{B}_{n+n'}(t+t').$$
\end{prop}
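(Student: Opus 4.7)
The inclusion $\mathcal{B}_n(t) \star \mathcal{B}_{n'}(t') \subseteq \mathcal{B}_{n+n'}(t+t')$ is immediate from Lemma~\ref{lem: LRfirstpart}. For the reverse inclusion, the plan is to proceed by induction on the non-negative integer $s := (2t-1-n) + (2t'-1-n')$, taking as base case $s = 0$ (that is, $n = 2t-1$ and $n' = 2t'-1$) the statement of Lemma~\ref{lem: LR base case}.

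For the inductive step, assume without loss of generality that $n < 2t-1$ (the case $n' < 2t'-1$ being symmetric, since $\star$ is commutative). Fix $\lambda\in\mathcal{B}_{n+n'}(t+t')$. Since $|\lambda| = n+n' \le 2(t+t')-2 < (t+t')^2$, the diagram $[\lambda]$ does not fill the entire $(t+t')\times(t+t')$ square, and so there exists an addable box $\mathsf{b}$ for $\lambda$ with $\lambda^+ := [\lambda]\cup\{\mathsf{b}\} \in \mathcal{B}_{n+n'+1}(t+t')$. Applying the inductive hypothesis with the pair $(n+1, n')$ (for which $s$ decreases by $1$), one obtains $\mu^+\in\mathcal{B}_{n+1}(t)$, $\nu\in\mathcal{B}_{n'}(t')$ and a Littlewood--Richardson filling $\mathsf{F}^+$ of $[\lambda^+\setminus\mu^+]$ of weight $\nu$.

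If $\mathsf{b}\in[\mu^+]$, the construction is immediate: since $\mathsf{b}$ is an addable corner of $\lambda$, its right and lower neighbours are not in $[\lambda]$ and hence not in $[\mu^+]$, so $\mathsf{b}$ is a removable box of $\mu^+$. Setting $\mu := [\mu^+]\setminus\{\mathsf{b}\} \in \mathcal{B}_n(t)$ yields $[\lambda\setminus\mu] = [\lambda^+\setminus\mu^+]$, so the same filling $\mathsf{F}^+$ witnesses $c^\lambda_{\mu,\nu} > 0$.

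The main obstacle is the remaining case $\mathsf{b}\notin[\mu^+]$. Here the strategy is to select a removable box $\mathsf{c}$ of $\mu^+$, set $\mu := [\mu^+]\setminus\{\mathsf{c}\} \in \mathcal{B}_n(t)$, and modify $\mathsf{F}^+$ into a valid Littlewood--Richardson filling of the new skew shape $[\lambda\setminus\mu] = ([\lambda^+\setminus\mu^+]\setminus\{\mathsf{b}\})\cup\{\mathsf{c}\}$ of weight still in $\mathcal{B}_{n'}(t')$. The cleanest subcase is when $\mathsf{c}$ can be chosen adjacent to $\mathsf{b}$ in a common row, say $\mathsf{c} = (r,\lambda_r)$ and $\mathsf{b} = (r,\lambda_r+1)$ with $\mu^+_r = \lambda_r$: then transplanting the label at $\mathsf{b}$ onto $\mathsf{c}$ preserves the column-strict and row-weakly-increasing conditions, leaves the reading sequence unchanged, and does not alter the weight $\nu$. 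Handling arbitrary configurations requires a case analysis reminiscent of the three-case argument in the proof of Lemma~\ref{lem: LR base case}, together with exploitation of the freedom to reselect the addable box $\mathsf{b}$ among the corners of $\lambda$ lying in the correct range; this combinatorial bookkeeping is expected to be the main technical challenge of the proof.
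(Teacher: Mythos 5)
Your framework coincides with the paper's up to the point where the real work begins: the easy inclusion via Lemma~\ref{lem: LRfirstpart}, the induction on $2t-n+2t'-n'$, and the use of Lemma~\ref{lem: LR base case} as base case are all the same. The gap is in your inductive step. After adding a box $\mathsf{b}$ to $\lambda$ and applying the inductive hypothesis to the pair $(n+1,n')$, you must turn the resulting Littlewood--Richardson filling of $[\lambda^+\setminus\mu^+]$ of weight $\nu$ into a witness that $\lambda\in\mathcal{B}_n(t)\star\mathcal{B}_{n'}(t')$. You only settle the trivial case $\mathsf{b}\in[\mu^+]$ and the very special configuration in which a removable box $\mathsf{c}$ of $\mu^+$ sits immediately to the left of $\mathsf{b}$ in the same row. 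In general $\mathsf{b}$ carries an arbitrary entry $j$ of the filling, and deleting it can destroy both the lattice property of the reading word and the fact that the weight is a partition; repairing this requires exactly the kind of surgery carried out in Lemma~\ref{lem: LR base case} (choosing the box containing the last occurrence of a suitable entry, relabelling later occurrences to keep the weight a partition, and treating the corner positions $(1,t+1)$ and $(t+1,1)$ separately). You acknowledge this yourself (``the main technical challenge''), and the appeal to ``reselecting'' $\mathsf{b}$ is not available as stated, since $\mu^+$, $\nu$ and the filling are produced by the inductive hypothesis only after $\mathsf{b}$ has been fixed. So the main case of your induction is asserted rather than proved.

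The gap is also unnecessary, because the point of the paper's argument is that all filling surgery can be confined to the base case. For the inductive step it suffices to use that $\star$ is commutative and associative, that $\mathcal{B}_N(M)\star\mathcal{B}_1(1)=\mathcal{B}_{N+1}(M+1)$ whenever $N<2M$ (the containment $\supseteq$ is the elementary observation recorded in the proof of Lemma~\ref{lem: LR base case}, and $\subseteq$ is Lemma~\ref{lem: LRfirstpart}), and to apply the inductive hypothesis to $(n,t)$ and $(n'-1,t'-1)$: assuming without loss of generality $t'-1>\tfrac{n'-1}{2}$, one gets $\mathcal{B}_n(t)\star\mathcal{B}_{n'}(t') = \mathcal{B}_n(t)\star\bigl(\mathcal{B}_{n'-1}(t'-1)\star\mathcal{B}_1(1)\bigr) = \mathcal{B}_{n+n'-1}(t+t'-1)\star\mathcal{B}_1(1) = \mathcal{B}_{n+n'}(t+t')$, with no manipulation of fillings at all. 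If you wish to keep your route, you must prove the box-removal statement in full generality, essentially redoing Lemma~\ref{lem: LR base case} inside the inductive step.
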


\begin{proof}
	That $\mathcal{B}_n(t) \star\mathcal{B}_{n'}(t') \subseteq \mathcal{B}_{n+n'}(t+t')$ follows from Definition~\ref{def:star operator} and Lemma~\ref{lem: LRfirstpart}. 
	For the reverse inclusion, and hence equality of sets, we proceed by induction on the quantity $2t-n+2t'-n'\ge 2$, with the base case given by Lemma~\ref{lem: LR base case}. 
	Now suppose $2t-n+2t'-n'>2$, so without loss of generality assume $t'-1>\tfrac{n'-1}{2}$. 
	Then $\mathcal{B}_{n'-1}(t'-1)\star\mathcal{B}_1(1)=\mathcal{B}_{n'}(t')$ 
	and $\mathcal{B}_n(t)\star\mathcal{B}_{n'-1}(t'-1)=\mathcal{B}_{n+n'-1}(t+t'-1)$ by the inductive hypothesis. Thus 
	\begin{align*}
	\mathcal{B}_n(t)\star\mathcal{B}_{n'}(t') &= \mathcal{B}_n(t) \star\big( \mathcal{B}_{n'-1}(t'-1)\star\mathcal{B}_1(1) \big)\\ 
	&=\big( \mathcal{B}_n(t) \star\mathcal{B}_{n'-1}(t'-1) \big) \star\mathcal{B}_1(1)\\ 
	&=\mathcal{B}_{n+n'-1}(t+t'-1)\star\mathcal{B}_1(1)\\ 
	&=\mathcal{B}_{n+n'}(t+t') 
	\end{align*}
	as claimed.
\end{proof}

We remark that the hypotheses on $n,t,n'$ and $t'$ in Proposition~\ref{prop: B star B} are necessary. For instance, considering $(4,4)\in\mathcal{P}(8)$ we observe that $\mathcal{B}_7(3)\star\mathcal{B}_{1}(1)\neq\mathcal{B}_{8}(4)$.

\begin{lemma}\label{lem: 10.3}
	Let $n,m,t\in\mathbb{N}$. Suppose that $\tfrac{m}{2}<t\le m$ and that $n\ge 5$. Then
	$$\mathcal{B}_m(t) \star (\mathcal{B}_n(n-2)\cup \{(n)\}^\circ) = \mathcal{B}_{m+n}(t+n).$$
	In particular, $\mathcal{P}(m+n) = \mathcal{P}(m)\star(\mathcal{P}(n)\setminus\{(n-1,1)\}^\circ)$.
\end{lemma}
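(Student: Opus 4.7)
The plan is to prove the main identity by double inclusion, then deduce the ``in particular'' statement by specialising to $t = m$. The forward inclusion $\mathcal{B}_m(t) \star (\mathcal{B}_n(n-2)\cup\{(n)\}^\circ) \subseteq \mathcal{B}_{m+n}(t+n)$ follows immediately from Lemma~\ref{lem: LRfirstpart}: every $\mu \in \mathcal{B}_m(t)$ satisfies $\mu_1, l(\mu) \le t$ and every $\nu \in \mathcal{B}_n(n-2) \cup \{(n),(1^n)\}$ satisfies $\nu_1, l(\nu) \le n$.

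For the reverse inclusion, I fix $\lambda \in \mathcal{B}_{m+n}(t+n)$ and use that all three sets involved are closed under conjugation of partitions to assume without loss of generality that $\lambda_1 \ge l(\lambda)$. If $\lambda_1 \le t+n-2$ then $\lambda \in \mathcal{B}_{m+n}(t+n-2) = \mathcal{B}_m(t) \star \mathcal{B}_n(n-2)$ by Proposition~\ref{prop: B star B} (the hypothesis $n \ge 5$ is used here to ensure $n-2 > n/2$), and we are done. Otherwise $\lambda_1 \in \{t+n-1, t+n\}$, and in these two subcases I take $\nu = (n)$ and construct $\mu$ so that $\lambda/\mu$ is a horizontal strip of size $n$; Pieri's rule then gives $c^\lambda_{\mu,(n)} = 1 > 0$. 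Concretely, set $\mu = (t, \lambda_2, \lambda_3, \dotsc)$ when $\lambda_1 = t+n$, and take the same shape with the last cell of the bottom non-empty row of $\lambda$ further deleted when $\lambda_1 = t+n-1$.

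The main technical point is to verify that $\mu \in \mathcal{B}_m(t)$, which is where the hypothesis $t > m/2$ enters. The identity $\sum_{i \ge 2}\lambda_i = |\lambda| - \lambda_1 \in \{m-t, m-t+1\}$ combined with $m < 2t$ forces $\lambda_2 \le t$ (so that $\mu$ is a valid partition) and $l(\lambda) \le t+1$. The only potentially problematic subcase is $\lambda_1 = t+n-1$ with $l(\lambda) = t+1$; a short counting argument shows that this can only occur when $m = 2t-1$ and $\lambda = (t+n-1, 1^t)$, and then $\lambda_{l(\lambda)} = 1$, so deleting that cell reduces the length to exactly $t$. The removed cells lie in row~$1$ in columns $t+1, \dotsc, \lambda_1$ together with at most one further cell in column $\lambda_{l(\lambda)} \le \lambda_2 \le t$, so they indeed form a horizontal strip of size $n$ as required.

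Finally, the ``in particular'' statement follows by applying the main identity with $t = m$: then $\mathcal{B}_m(m) = \mathcal{P}(m)$, $\mathcal{B}_{m+n}(m+n) = \mathcal{P}(m+n)$, and $\mathcal{B}_n(n-2)\cup\{(n)\}^\circ = \mathcal{P}(n)\setminus\{(n-1,1)\}^\circ$, because the only partitions of $n$ with first row or column of length exactly $n-1$ are $(n-1,1)$ and its conjugate $(2, 1^{n-2})$. I expect the main obstacle in carrying out this plan to be the case analysis and length bookkeeping in the $\lambda_1 \in \{t+n-1, t+n\}$ boundary cases; everything else reduces to Proposition~\ref{prop: B star B} or to elementary manipulations.
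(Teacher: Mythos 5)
Your proposal is correct and follows essentially the same route as the paper: both arguments use Proposition~\ref{prop: B star B} (with $n-2>\tfrac{n}{2}$ from $n\ge 5$) together with closure under conjugation to reduce to partitions $\lambda$ with $\lambda_1\in\{t+n-1,t+n\}$, and then exhibit an explicit $\mu\in\mathcal{B}_m(t)$ with $c^\lambda_{\mu,(n)}>0$, the bound $\sum_{i\ge 2}\lambda_i\le m-t+1\le t$ doing the bookkeeping. The only (harmless) difference is your choice of $\mu$ when $\lambda_1=t+n-1$ — you keep first part $t$ and delete a box from the bottom row, where the paper generically takes $(t-1,\lambda_2,\lambda_3,\dotsc)$ and treats $(t+n-1,t)$ and $(t+n-1,1^t)$ separately — and your handling of the boundary subcase $l(\lambda)=t+1$ is accurate.
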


\begin{proof}
	If $t=1$ then $m=1$ and the result follows from the branching rule for symmetric groups \cite[9.2]{J}, so from now on we may assume $t\ge 2$.

	Let $X:=\mathcal{B}_m(t)\star(\mathcal{B}_n(n-2)\cup \{(n)\}^\circ)$. Since $n\ge 5$, we have that $n-2>\tfrac{n}{2}$, and so $\mathcal{B}_{m+n}(t+n-2)\subseteq X$ by Proposition~\ref{prop: B star B}. Moreover, $X\subseteq\mathcal{B}_m(t)\star\mathcal{P}(n)=\mathcal{B}_{m+n}(t+n)$, again by Proposition~\ref{prop: B star B}. Since $X^\circ=X$, it remains to show that if $\lambda\vdash m+n$ with $\lambda_1\in\{t+n-1,t+n\}$, then $\lambda\in X$.
	
	First suppose $\lambda_1=t+n$, so $\lambda=(t+n,\mu)$ for some $\mu\vdash m-t<t$. Observe that $c\6\lambda_{(t,\mu),(n)}\neq 0$ and that $(t,\mu)\in\mathcal{B}_m(t)$. It follows that $\lambda\in X$.
	
	Otherwise we have $\lambda_1=t+n-1$, so $\lambda=(t+n-1,\mu)$ for some $\mu\vdash m-t+1$. If $\mu_1\ge t$, then $m=2t-1$ and thus $\lambda=(t+n-1,t)$. Since $c\6\lambda_{(t,t-1),(n)}\neq 0$ and $(t,t-1)\in\mathcal{B}_m(t)$, then $\lambda\in X$. 
	If $l(\mu)\ge t$ then $m=2t-1$, $\lambda=(t+n-1,1^t)$, and we similarly conclude that $\lambda\in X$ since $(t,1^{t-1})\in\mathcal{B}_m(t)$.
	Otherwise, $\mu\in\mathcal{B}_{m-t+1}(t-1)$, so $(t-1,\mu)\in\mathcal{B}_m(t)$. Since $c\6\lambda_{(t-1,\mu),(n)}\neq 0$ we deduce that $\lambda\in X$.
\end{proof}

Recall from Section~\ref{sec: LR prelims} that $\mathcal{LR}(\gamma)$ denotes the set of weights of Littlewood--Richardson fillings of a skew shape $\gamma$, and from Section~\ref{sec: sn prelims} that $\nu^+$ denotes the set of partitions indexing the irreducible constituents of the induced character $\chi^\nu\up^{\fS_{|\nu|+1}}$, for any partition $\nu$.

\begin{lemma}\label{lem: skew filling}
	Let $X=[\lambda\setminus\mu]$ be a skew shape, and suppose $\nu\in\mathcal{LR}(X)$. 
	Let $Y$ be a skew shape obtained from $X$ by adding a single box. Then $\mathcal{LR}(Y)\cap \nu^+\ne\emptyset$.
\end{lemma}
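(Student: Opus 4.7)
The plan is to translate the lemma into character-theoretic language using the correspondence $\nu\in\mathcal{LR}([\lambda\setminus\mu])\iff c^\lambda_{\mu\nu}>0$, and then apply transitivity of induction in two different orders. Fix a representation $X=[\lambda\setminus\mu]$ and set $n=|\lambda|$. Since $Y=X\cup\{b\}$ is again a skew shape, the new box $b$ lies either in an addable position of $\lambda$ (case (a), giving $Y=[\tilde\lambda\setminus\mu]$ with $\tilde\lambda\in\lambda^+$) or in a removable position of $\mu$ (case (b), giving $Y=[\lambda\setminus\tilde\mu]$ with $\tilde\mu\in\mu^-$).

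For case (a), I will compute the induced character $(\chi^\mu\times\chi^\nu)\up_{\fS_{|\mu|}\times\fS_{|\nu|}}^{\fS_{n+1}}$ in two ways. Going first through $\fS_n$ and then applying the branching rule yields
\[\sum_{\rho\vdash n}c^\rho_{\mu\nu}\sum_{\sigma\in\rho^+}\chi^\sigma,\]
while going through $\fS_{|\mu|}\times\fS_{|\nu|+1}$ using $\chi^\nu\up^{\fS_{|\nu|+1}}=\sum_{\tilde\nu\in\nu^+}\chi^{\tilde\nu}$ yields
\[\sum_{\tilde\nu\in\nu^+}(\chi^\mu\times\chi^{\tilde\nu})\up^{\fS_{n+1}}=\sum_{\tilde\nu\in\nu^+}\sum_{\sigma\vdash n+1}c^\sigma_{\mu\tilde\nu}\chi^\sigma.\]
Comparing the coefficients of $\chi^{\tilde\lambda}$ on the two sides produces the identity
\[\sum_{\rho\in\tilde\lambda^-}c^\rho_{\mu\nu}=\sum_{\tilde\nu\in\nu^+}c^{\tilde\lambda}_{\mu\tilde\nu}.\]
Since $\lambda\in\tilde\lambda^-$ and $c^\lambda_{\mu\nu}>0$ by hypothesis, the left-hand side is strictly positive, so some $\tilde\nu\in\nu^+$ satisfies $c^{\tilde\lambda}_{\mu\tilde\nu}>0$; equivalently $\tilde\nu\in\mathcal{LR}(Y)\cap\nu^+$.

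Case (b) is handled symmetrically by computing $(\chi^{\tilde\mu}\times\chi^\nu)\up_{\fS_{|\tilde\mu|}\times\fS_{|\nu|}}^{\fS_n}$ in the two orders, the first routing through $\fS_{|\mu|}\times\fS_{|\nu|}$ via $\chi^{\tilde\mu}\up^{\fS_{|\mu|}}=\sum_{\mu'\in\tilde\mu^+}\chi^{\mu'}$, the second through $\fS_{|\tilde\mu|}\times\fS_{|\nu|+1}$. Comparing the coefficients of $\chi^\lambda$ produces the analogous identity $\sum_{\mu'\in\tilde\mu^+}c^\lambda_{\mu'\nu}=\sum_{\tilde\nu\in\nu^+}c^\lambda_{\tilde\mu\tilde\nu}$, and combining $\mu\in\tilde\mu^+$ with the hypothesis $c^\lambda_{\mu\nu}>0$ once more produces the required $\tilde\nu$. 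I do not foresee any substantive obstacle: the two geometric possibilities for how $b$ can extend $X$ to a skew shape correspond cleanly to the two orders in which transitivity of induction can be applied, and the branching rule supplies everything else.
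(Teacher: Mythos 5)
Your proof is correct and follows essentially the same route as the paper: the paper phrases the argument as positivity of the iterated Littlewood--Richardson coefficient $c^{\tilde\lambda}_{\mu,\nu,(1)}$ (resp.\ $c^{\lambda}_{\tilde\mu,\nu,(1)}$) expanded in two ways via transitivity of induction and the branching rule, which is exactly your coefficient-comparison identity in each of the two cases (external vs.\ internal box).
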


\begin{proof}
	Let $|\lambda|=n$ and $|\mu|=m$.
	First suppose $Y$ is obtained from $X$ by adding a box externally, that is, $Y=[\tilde{\lambda}\setminus\mu]$ for some $\tilde{\lambda}\in\lambda^+$. Since $\nu\in\mathcal{LR}(X)$, the iterated Littlewood--Richardson coefficient $c^{\tilde{\lambda}}_{\mu, \nu, (1)} = \langle \chi^{\tilde{\lambda}}, \chi^\mu\times\chi^\nu\times\chi^{(1)}\up^{\fS_{n+m+1}}_{\fS_n\times\fS_m\times\fS_1}\rangle$ is positive. But by the branching rule,
	\begin{align*}
	0<c^{\tilde{\lambda}}_{\mu, \nu, (1)} &= \left\langle \chi^{\tilde{\lambda}}, \chi^\mu\times(\chi^\nu\times\chi^{(1)})\up^{\fS_n\times\fS_{m+1}}_{\fS_n\times\fS_m\times\fS_1} \up^{\fS_{n+m+1}}_{\fS_n\times\fS_{m+1}}\right\rangle \\
	&= \sum_{\tilde{\nu}\in\nu^+} \left\langle \chi^{\tilde{\lambda}}, \chi^\mu\times\chi^{\tilde{\nu}} \up^{\fS_{n+m+1}}_{\fS_n\times\fS_{m+1}} \right\rangle
	\end{align*}
	so there exists $\tilde{\nu}\in\nu^+$ such that $\left\langle \chi^{\tilde{\lambda}}, \chi^\mu\times\chi^{\tilde{\nu}} \up^{\fS_{n+m+1}}_{\fS_n\times\fS_{m+1}} \right\rangle>0$. Hence $\tilde{\nu}\in\mathcal{LR}(Y)$.
	
	Otherwise, $Y$ is obtained from $X$ by adding a box internally, that is, $Y=[\lambda\setminus\tilde{\mu}]$ for some $\tilde{\mu}\in\mu^-$. A similar argument 
shows that $\mathcal{LR}(Y)\cap \nu^+\ne\emptyset$ in this case also. 
\end{proof}

The following definition is crucial for our arguments in the next section.

\begin{defn}\label{def: Dset newOmegas}
	Let $q,m\in\mathbb{N}$ be such that $q\geq 2$ and let $\mathcal{B}\subseteq\mathcal{P}(m)$.
	Let $H=(\fS_{m})^{\times q}\leq \fS_{qm}$.
	We let $\mathcal{D}(q, m, \mathcal{B})$ be the subset of $\mathcal{P}(qm)$ consisting of all those partitions $\lambda\in\mathcal{P}(qm)$ for which there exists $\mu_1,\mu_2,\ldots, \mu_{q}\in\mathcal{B}$, not all equal, such that 
	$$\chi^{\mu_1}\times\chi^{\mu_2}\times\cdots\times\chi^{\mu_q}\ \Big{|}\ \chi^\lambda\down_{H}.$$
Notice that if $\mathcal{B}^\circ=\mathcal{B}$, then $\mathcal{D}(q,m,\mathcal{B})^\circ = \mathcal{D}(q,m,\mathcal{B})$.
\end{defn}

We now aim to show that under appropriate restrictions on the parameters we have that $\mathcal{B}_{qm}(qt-1) \subseteq \mathcal{D}\big(q,m,\mathcal{B}_m(t)\big).$ This is done in Proposition~\ref{prop: BinD} using an inductive argument. Lemma~\ref{prop: general m, q=2} below provides an important step towards the proof of Proposition \ref{prop: BinD}.

\begin{lemma}\label{prop: general m, q=2}
	Let $m,t\in\mathbb{N}$ and suppose $\tfrac{m}{2}+1<t\le m$. 
	Let $\lambda\in \mathcal{B}_{2m}(2t-1)$. Then either $\lambda\in \mathcal{D}\big(2,m,\mathcal{B}_m(t)\big)$, or $\chi^\lambda\down_{\fS_m\times\fS_m}$ has two irreducible constituents $\chi^\alpha\times\chi^\alpha$ and $\chi^\beta\times\chi^\beta$ where $\alpha\ne\beta\in\mathcal{B}_m(t)$.
\end{lemma}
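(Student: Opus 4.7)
The plan is to first use Proposition~\ref{prop: B star B} to produce an initial Littlewood--Richardson decomposition of $\lambda$, then either extract an asymmetric decomposition---placing $\lambda$ in $\mathcal{D}(2,m,\mathcal{B}_m(t))$---or construct a second diagonal decomposition by a local modification of the associated LR filling.

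Since the hypothesis $t>m/2+1$ gives $t-1>m/2$, Proposition~\ref{prop: B star B} yields $\mathcal{B}_m(t-1)\star\mathcal{B}_m(t)=\mathcal{B}_{2m}(2t-1)$. Applied to $\lambda$, this produces $\mu\in\mathcal{B}_m(t-1)$ and $\nu\in\mathcal{B}_m(t)$ with $c^\lambda_{\mu\nu}>0$. If $\mu\ne\nu$ then both lie in $\mathcal{B}_m(t)$, so $\lambda\in\mathcal{D}(2,m,\mathcal{B}_m(t))$ and the proof is complete. Otherwise $\mu=\nu=:\alpha\in\mathcal{B}_m(t-1)$ and $c^\lambda_{\alpha\alpha}>0$, yielding the first diagonal constituent $\chi^\alpha\times\chi^\alpha$ of $\chi^\lambda\down_{\fS_m\times\fS_m}$. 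Since every set in sight is closed under conjugation of partitions, I may assume $\lambda_1\ge l(\lambda)$. If $\lambda_1=2t-1$, then by Lemma~\ref{lem: LRfirstpart} every LR pair $(\mu',\nu')$ with $c^\lambda_{\mu'\nu'}>0$ satisfies $\mu'_1+\nu'_1\ge 2t-1$, and combined with $\mu_1\le t-1$ coming from $\mu\in\mathcal{B}_m(t-1)$ this forces $\nu_1=t$, so in fact $\mu\ne\nu$, again placing $\lambda$ in $\mathcal{D}$.

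I may therefore assume $\lambda_1\le 2t-2$, i.e.\ $\lambda\in\mathcal{B}_{2m}(2t-2)$. The task now is to produce $\beta\ne\alpha$ in $\mathcal{B}_m(t)$ with $c^\lambda_{\beta\beta}>0$, exploiting the slack between $[\alpha]\subseteq(t-1)\times(t-1)$ and $[\lambda]\subseteq(2t-2)\times(2t-2)$. Fix an LR filling $F$ of the skew shape $[\lambda\setminus\alpha]$ of weight $\alpha$. I then locate a removable corner $r$ of $\alpha$ and an addable corner $a$ of $\alpha\setminus\{r\}$ lying inside $[\lambda\setminus\alpha]\cup\{r\}$, chosen so that $\beta:=(\alpha\setminus\{r\})\cup\{a\}$ is a partition of $m$ distinct from $\alpha$ that still lies in $\mathcal{B}_m(t)$. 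A careful local swap of entries of $F$ in the neighbourhood of $r$ and $a$---with existence of the necessary extensions guaranteed by Lemma~\ref{lem: skew filling}---then yields a valid LR filling of $[\lambda\setminus\beta]$ of weight $\beta$, and hence $c^\lambda_{\beta\beta}>0$.

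The main technical obstacle is verifying that a suitable pair $(r,a)$ exists in every configuration and that the local modification produces a filling of the exact target weight $\beta$. In the degenerate cases where $\alpha$ is nearly rectangular, so that any naive corner swap either leaves $\mathcal{B}_m(t)$ or returns $\alpha$ itself, the cleanest route is to invoke Lemma~\ref{lem: iteratedLR} with $a=2$: whenever $\lambda_1=2\alpha_1$ (which in the extremal regime is forced), $F$ strips off the common first row, reducing the problem to the same statement applied to $\tilde{\lambda}=(\lambda_2,\lambda_3,\ldots)$ and $\tilde{\alpha}=(\alpha_2,\alpha_3,\ldots)$ with parameters $(m-\alpha_1,t-1)$ still satisfying the standing hypothesis. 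One then concludes by induction on $|\lambda|$, with base case supplied by Lemma~\ref{lem: LR base case}.
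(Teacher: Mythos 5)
Your opening is the same as the paper's: apply Proposition~\ref{prop: B star B} to write $\lambda$ with $c^\lambda_{\mu\nu}>0$ for $\mu\in\mathcal{B}_m(t)$, $\nu\in\mathcal{B}_m(t-1)$, and dispose of the case $\mu\ne\nu$ at once. The gap is in the case $\mu=\nu=\alpha$, which is the entire content of the lemma. You aim to produce a \emph{second diagonal} constituent, i.e.\ $\beta\ne\alpha$ in $\mathcal{B}_m(t)$ with $c^\lambda_{\beta\beta}>0$, where $\beta$ is obtained from $\alpha$ by moving one box, and you justify the required filling of $[\lambda\setminus\beta]$ of weight exactly $\beta$ by ``a careful local swap \dots guaranteed by Lemma~\ref{lem: skew filling}''. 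But Lemma~\ref{lem: skew filling} gives no such control: after enlarging the skew shape by one box it only guarantees a Littlewood--Richardson filling of \emph{some} weight in $\hat{\nu}^+$, not of the particular weight $\beta$ matching your new inner shape. This is precisely why the paper does not attempt your stronger claim. Its proof passes a box from $[\lambda\setminus\mu]$ into $\mu$ to obtain $\hat{\mu}\vdash m+1$, $\hat{\nu}\vdash m-1$ with $c^\lambda_{\hat{\mu}\hat{\nu}}>0$, then deletes a \emph{different} corner of $\hat{\mu}$ to get $\tilde{\mu}\ne\mu$ and invokes Lemma~\ref{lem: skew filling} to get $c^\lambda_{\tilde{\mu}\tilde{\nu}}>0$ for some uncontrolled $\tilde{\nu}\in\hat{\nu}^+\subseteq\mathcal{B}_m(t)$; the conclusion is then the stated disjunction (either $\tilde{\mu}\ne\tilde{\nu}$, giving $\lambda\in\mathcal{D}$, or $\tilde{\mu}=\tilde{\nu}\ne\mu$, giving a second diagonal). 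Your plan needs the filling to land on a prescribed weight, which is a genuinely harder statement and is nowhere established.

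The fallback you propose for nearly rectangular $\alpha$ also fails as stated. First, $\lambda_1=2\alpha_1$ is not forced: $c^\lambda_{\alpha\alpha}>0$ only gives $\alpha_1\le\lambda_1\le 2\alpha_1$. Second, the reduced parameters $(m'=m-\alpha_1,\ t'=t-1)$ need not satisfy the standing hypothesis $\tfrac{m'}{2}+1<t'\le m'$: taking $\alpha_1=t-1$ (which is allowed, since $\alpha\in\mathcal{B}_m(t-1)$), the requirement $t-1\le m-\alpha_1$ becomes $2t\le m+2$, contradicting $t>\tfrac{m}{2}+1$; so the induction does not close. Finally, the genuinely exceptional shapes $\lambda\in\{(m,m),(2^m)\}$, for which any one-box-moving argument breaks down (this is case (c) in the paper's analysis), are not treated at all in your sketch; the paper handles them separately by observing that the diagonal constituents of $\chi^{(m,m)}\down_{\fS_m\times\fS_m}$ inside $\mathcal{B}_m(t)$ are exactly the $\chi^{(a,m-a)}\times\chi^{(a,m-a)}$ with $\tfrac{m}{2}\le a\le t$, and the hypothesis $t>\tfrac{m}{2}+1$ supplies at least two such $a$. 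As it stands, your argument establishes the easy half of the lemma and gestures at, but does not carry out, the combinatorial construction that constitutes its proof.
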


\begin{proof}
	First suppose $\lambda=(m,m)$. Notice that $\chi^\alpha\times\chi^\beta$ is a constituent of $\chi^\lambda\down_{\fS_m\times\fS_m}$ with $\alpha\in\mathcal{B}_m(t)$ if and only if $\alpha=\beta=(\alpha_1,m-\alpha_1)$ with $\tfrac{m}{2}\le\alpha_1\le t$. Since $t\ge 2$ and $t>\tfrac{m}{2}+1$, we have that $|[\tfrac{m}{2},t]\cap\mathbb{N}|\geq 2$. Thus we can always find two irreducible constituents $\chi^\alpha\times\chi^\alpha$ and $\chi^\beta\times\chi^\beta$ where $\alpha\ne\beta\in\mathcal{B}_m(t)$ as required.
Arguing similarly we conclude that the same happens for $\lambda=(2\6m)=(m,m)'$.
	
	Now let $\lambda\in\mathcal{B}_{2m}(2t-1)\setminus\{(m,m)\}^\circ$. By Proposition~\ref{prop: B star B}, 
	there exist partitions $\mu\in\mathcal{B}_m(t)$ and $\nu\in\mathcal{B}_m(t-1)$ such that $c^\lambda_{\mu\nu}>0$. 
	If $\mu\ne\nu$ then $\lambda\in\mathcal{D}\big(2,2m,\mathcal{B}_m(t)\big)$ and we are done, so assume that $\mu=\nu\in\mathcal{B}_m(t-1)$. By `passing a box' between $[\mu]$ and $[\lambda\setminus\mu]$, we construct partitions
	\begin{itemize}\setlength\itemsep{0.5em}
		\item[(i)] $\hat{\mu}\in\mathcal{B}_{m+1}(t)$ and $\hat{\nu}\in\mathcal{B}_{m-1}(t-1)$ such that $c^\lambda_{\hat{\mu}\hat{\nu}}>0$; then
		\item[(ii)] $\tilde{\mu}\in\mathcal{B}_m(t)$ and $\tilde{\nu}\in\mathcal{B}_m(t)$ such that $c^\lambda_{\tilde{\mu}\tilde{\nu}}>0$, \emph{and} $\tilde{\mu}\ne\mu$, 
	\end{itemize}
	whence the assertion of the proposition follows (either since $\tilde{\mu}\ne\tilde{\nu}$ so $\lambda\in\mathcal{D}\big(2,m,\mathcal{B}_m(t)\big)$ directly, or $\tilde{\mu}=\tilde{\nu}$ but $\tilde{\mu}\ne\mu$). We now explain in detail the constructions (i) and (ii).
	
	\smallskip
	
	\noindent\textit{Step (i):} Fix a Littlewood--Richardson filling $\mathsf{F}$ of $[\lambda\setminus\mu]$ of weight $\nu$. Let $\mathsf{b}$ denote the box of $[\lambda\setminus\mu]$ containing the last 1 in the reading order of $\mathsf{F}$; clearly this is an addable box for $\mu$. We split into three cases depending on the shape of $[\mu]\cup\mathsf{b}$.
	
	\textit{Case (a):} If $[\mu]\cup\mathsf{b}$ is not a rectangle, then define $\hat{\mu}$ via $[\hat{\mu}]:=[\mu]\cup\mathsf{b}$. 
	Let $k\in\mathbb{N}$ be such that $\nu_1=\nu_2=\cdots=\nu_k>\nu_{k+1}$.
	Define $\mathsf{F'}$ to be obtained from $\mathsf{F}$ by removing the 1 corresponding to the box $\mathsf{b}$, and then if $k>1$ additionally relabelling the last $i$ in $\mathsf{F}$ by the number $i-1$, for each $2\le i\le k$. Thus $\mathsf{F}'$ is a Littlewood--Richardson filling of $[\lambda\setminus\hat{\mu}]$ of weight $\hat{\nu}:=(\nu_1,\dotsc,\nu_{k-1},\nu_k-1,\nu_{k+1},\dotsc,\nu_{l(\nu)})$, by the same argument as in the proof of Lemma~\ref{lem: LR base case}.
	
	Now we may assume $[\mu]\cup\mathsf{b}$ is a rectangle. Notice $m\ge 3$, so either $[(2,1)]\subseteq [\mu]$ or $\mu\in\{(m),(1^m)\}$. If $\mu=(m)$, then $\mathsf{F}$ being a filling of weight $\nu=\mu$ and the definition of $\mathsf{b}$ together imply that $\lambda=(2m)\notin \mathcal{B}_{2m}(2t-1)$, a contradiction. Similarly if $\mu=(1^m)$ then $\lambda=(1^{2m})\notin\mathcal{B}_{2m}(2t-1)$. Thus when $[\mu]\cup\mathsf{b}$ is a rectangle then $[(2,1)]\subseteq [\mu]$.
	
	\textit{Case (b):} If $[\mu]\cup\mathsf{b}$ is a rectangle and $l(\lambda)>l(\mu)$, let $\mathsf{c}$ be the box in row $l(\mu)+1$, column 1, and define $[\hat{\mu}]:=[\mu]\cup\mathsf{c}$. Suppose in $\mathsf{F}$ the box $\mathsf{c}$ is filled with the number $j$. Since the rows of $[\lambda\setminus\mu]$ are filled weakly increasingly, and the columns strictly increasingly, the $j$ in $\mathsf{c}$ must be the last $j$ that appears in the reading order of $\mathsf{F}$. Suppose $\nu_j=\nu_{j+1}=\dotsc=\nu_l>\nu_{l+1}$. Define $\mathsf{F'}$ to be obtained from $\mathsf{F}$ by removing the $j$ corresponding to the box $\mathsf{c}$, and then if $l>j$ additionally relabelling the last $i$ in $\mathsf{F}$ by the number $i-1$, for each $j+1\le i\le l$. Thus $\mathsf{F}'$ is a Littlewood--Richardson filling of $[\lambda\setminus\hat{\mu}]$ of weight $\hat{\nu}:=(\nu_1,\dotsc,\nu_{l-1},\nu_l-1,\nu_{l+1},\dotsc,\nu_{l(\nu)})$, by the same argument as in the proof of Lemma~\ref{lem: LR base case}.
	
	\textit{Case (c):} Otherwise $[\mu]\cup\mathsf{b}$ is a rectangle and $l(\lambda)=l(\mu)$. 
	If $l(\mu)>2$, then the number 2 appears in $\mathsf{F}$ precisely as the entries in the second row of $[\lambda\setminus\mu]$, and thus $\nu_2=\lambda_2-\mu_2$. 
	But the number $\nu_1$ of 1s in $\mathsf{F}$ is equal to $\lambda_1-\mu_1+1$ (they appear in the first row of $[\lambda\setminus\mu]$ and $\mathsf{b}$). Thus $\mu=\nu$ and $\mu_1=\mu_2$ give $\lambda_2=\lambda_1+1$, a contradiction. Thus $l(\mu)=2$, in which case $\mu$ is of the form $(a,a-1)\vdash m$, but since $l(\lambda)=l(\mu)=2$ and $\mu=\nu$ then in fact $\lambda=(m,m)$, a contradiction. Thus case (c) in fact cannot occur.
	
	Observe that in cases (a) and (b), $[\hat{\mu}]$ is obtained from $[\mu]$ by adding a single addable box, so $\mu\in\mathcal{B}_m(t-1)$ implies $\hat{\mu}\in\mathcal{B}_{m+1}(t)$. Also since $\nu\in\mathcal{B}_m(t-1)$, clearly $\hat{\nu}\in\mathcal{B}_{m-1}(t-1)$. 
	
	\smallskip
	
	\noindent\textit{Step (ii):} Let $\mathsf{x}=[\hat{\mu}]\setminus[\mu]$. By construction $[\hat{\mu}]$ is not a rectangle and hence $\mathsf{x}$ is not the only removable box of $[\hat{\mu}]$. 
	Choose a removable box of $\hat{\mu}$ different from $\mathsf{x}$, say $\mathsf{y}$. Let $\tilde{\mu}$ be defined via $[\tilde{\mu}]:=[\hat{\mu}]-\mathsf{y}$, so $\tilde{\mu}\ne\mu$. Also $\hat{\mu}\in\mathcal{B}_{m+1}(t)$, so $\tilde{\mu}\in\mathcal{B}_m(t)$. By Lemma~\ref{lem: skew filling}, there exists a Littlewood--Richardson filling of $[\lambda\setminus\hat{\mu}]\cup\mathsf{y}$ of weight $\tilde{\nu}$, for some $\tilde{\nu}\in\hat{\nu}^+$. 
	But $\hat{\nu}\in\mathcal{B}_{m-1}(t-1)$, so $\tilde{\nu}\in\mathcal{B}_m(t)$. 
\end{proof}

\begin{prop}\label{prop: BinD} 
	Let $m,t\in\mathbb{N}$ be such that $\tfrac{m}{2}+1<t\le m$. Let $q\in\mathbb{N}_{\ge 3}$. Then
	$$\mathcal{B}_{qm}(qt-1) \subseteq \mathcal{D}\big(q,m,\mathcal{B}_m(t)\big).$$
\end{prop}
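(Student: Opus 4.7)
The plan is to proceed by induction on $q \geq 3$, using the factorisation provided by Proposition~\ref{prop: B star B}. The hypothesis $\tfrac{m}{2}+1 < t \leq m$ implies both $(q-1)m/2 < (q-1)t - 1 \leq (q-1)m$ and $m/2 < t \leq m$, so Proposition~\ref{prop: B star B} yields
\[
\mathcal{B}_{(q-1)m}\bigl((q-1)t-1\bigr) \star \mathcal{B}_m(t) = \mathcal{B}_{qm}(qt-1).
\]
Hence for any $\lambda \in \mathcal{B}_{qm}(qt-1)$ we may fix partitions $\mu \in \mathcal{B}_{(q-1)m}((q-1)t-1)$ and $\nu \in \mathcal{B}_m(t)$ with $c^\lambda_{\mu\nu} > 0$, so that $\chi^\mu \times \chi^\nu$ is a constituent of $\chi^\lambda \down_{\fS_{(q-1)m} \times \fS_m}$.

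For the inductive step $q \geq 4$, the inductive hypothesis applied to $\mu$ supplies partitions $\mu_1, \ldots, \mu_{q-1} \in \mathcal{B}_m(t)$, not all equal, such that $\chi^{\mu_1} \times \cdots \times \chi^{\mu_{q-1}}$ occurs as a constituent of $\chi^\mu \down_{(\fS_m)^{\times (q-1)}}$. Restricting $\chi^\mu \times \chi^\nu$ further to $(\fS_m)^{\times q}$ via transitivity of restriction, we deduce that $\chi^{\mu_1} \times \cdots \times \chi^{\mu_{q-1}} \times \chi^\nu$ occurs in $\chi^\lambda \down_{(\fS_m)^{\times q}}$; these $q$ factors are still not all equal, giving $\lambda \in \mathcal{D}(q, m, \mathcal{B}_m(t))$.

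The delicate part is the base case $q=3$, because Lemma~\ref{prop: general m, q=2} admits a second alternative. If $\mu \in \mathcal{D}(2, m, \mathcal{B}_m(t))$, the argument above (with $q-1$ replaced by $2$) applies verbatim. Otherwise, $\chi^\mu \down_{\fS_m \times \fS_m}$ contains two distinct \emph{diagonal} constituents $\chi^\alpha \times \chi^\alpha$ and $\chi^\beta \times \chi^\beta$ with $\alpha \neq \beta$ in $\mathcal{B}_m(t)$. Transitivity of restriction then shows that both $\chi^\alpha \times \chi^\alpha \times \chi^\nu$ and $\chi^\beta \times \chi^\beta \times \chi^\nu$ occur in $\chi^\lambda \down_{(\fS_m)^{\times 3}}$; since $\alpha \neq \beta$, at least one of the triples $(\alpha,\alpha,\nu)$, $(\beta,\beta,\nu)$ is not all equal, placing $\lambda$ in $\mathcal{D}(3, m, \mathcal{B}_m(t))$.

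The only real obstacle is absorbing this second alternative at $q=3$: the freedom to pick between $\alpha$ and $\beta$ is exactly what is needed to avoid the accidental situation in which $\nu$ coincides with the repeated diagonal entry. Once $q=3$ is settled, the induction for $q \geq 4$ is entirely mechanical.
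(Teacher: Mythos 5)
Your proposal is correct and follows essentially the same route as the paper: induction on $q$ with the decomposition $\mathcal{B}_{(q-1)m}((q-1)t-1)\star\mathcal{B}_m(t)=\mathcal{B}_{qm}(qt-1)$ from Proposition~\ref{prop: B star B}, and the base case $q=3$ resolved via the two alternatives of Lemma~\ref{prop: general m, q=2}, using that $\nu$ cannot coincide with both diagonal entries $\alpha\ne\beta$. The only (harmless) difference is that you verify the hypotheses of Proposition~\ref{prop: B star B} explicitly, which the paper leaves implicit.
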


\begin{proof}
	We proceed by induction on $q$, beginning with the base case $q=3$. Let $\lambda\in\mathcal{B}_{3m}(3t-1)$. Then $\mathcal{B}_{2m}(2t-1)\star\mathcal{B}_m(t) = \mathcal{B}_{3m}(3t-1)$ by Proposition~\ref{prop: B star B}, 
	and so there exists $\mu\in\mathcal{B}_{2m}(2t-1)$ and $\nu\in\mathcal{B}_m(t)$ such that $c^\lambda_{\mu\nu}>0$. By Lemma~\ref{prop: general m, q=2}, one of the following holds:
	\begin{itemize}\setlength\itemsep{0.5em}
		\item[(i)] $\mu\in\mathcal{D}\big(2,m,\mathcal{B}_m(t)\big)$, in which case $c^\mu_{\sigma\tau}>0$ for some $\sigma\ne\tau\in\mathcal{B}_m(t)$. Then $c^\lambda_{\sigma\tau\nu}>0$ and hence $\lambda\in\mathcal{D}\big(3,m,\mathcal{B}_m(t)\big)$; or
		\item[(ii)] there exist two distinct partitions $\alpha, \beta\in\mathcal{B}_m(t)$ such that $c^\mu_{\alpha\alpha}\neq 0\neq c^\mu_{\beta\beta}$.
Then $c^\lambda_{\alpha\alpha\nu}\neq 0\neq c^\lambda_{\beta\beta\nu}$. Since $\nu\ne\alpha$ or $\nu\ne\beta$, we deduce that $\lambda\in\mathcal{D}\big(3,m,\mathcal{B}_m(t)\big)$ in this case as well.
	\end{itemize}
	
	Now suppose $q\ge 4$ and assume the statement of the proposition holds for $q-1$. Let $\lambda\in\mathcal{B}_{qm}(qt-1)$. Then there exists $\mu\in\mathcal{B}_{(q-1)m}\big((q-1)t-1\big)$ and $\nu\in\mathcal{B}_m(t)$ such that $c^\lambda_{\mu\nu}>0$, by Proposition~\ref{prop: B star B}. 
	By the inductive hypothesis, $\mu\in\mathcal{D}\big(q-1,m,\mathcal{B}_m(t)\big)$, so there exists $\mu_1,\dotsc,\mu_{q-1}\in\mathcal{B}_m(t)$ which are not all equal such that $c^{\mu}_{\mu_1\dotsc\mu_{q-1}}>0$. Hence $c^\lambda_{\mu_1\dotsc\mu_{q-1}\nu}>0$, which gives $\lambda\in \mathcal{D}\big(q,m,\mathcal{B}_m(t)\big)$.
\end{proof}

\smallskip


\section{The prime power case}\label{sec: new omegas prime power}

The fundamental part of Theorem~B  is the case when $n$ is a power of $p$, which we address in this section.
Let $k\in\mathbb{N}$ and let $\phi\in\Lin(P_{p^k})\setminus\{\triv_{P_{p^k}}\}$. 
The aim of this section is to determine the following numbers:
\[ m(\phi) = \max\{x\in\mathbb{N} \mid \mathcal{B}_{p^k}(x) \subseteq\Omega(\phi)\}\quad\mathrm{and}\quad M(\phi) = \min\{x\in\mathbb{N} \mid \Omega(\phi)\subseteq\mathcal{B}_{p^k}(x)\}.\]

Recall that if $\phi\in\mathrm{Lin}(P_{p^k})$ corresponds to $s\in [\overline{p}]^k$ (see Section 2.2) then we sometimes denote $m(\phi)$ and $M(\phi)$ by $m(s)$ and $M(s)$ respectively. 

Let $\phi=\phi(s)$ for some $s=(s_1,\dotsc,s_k)\in[\overline{p}]^k$. Since $\phi\ne\triv_{P_{p^k}}$ then $s\ne (0,\dotsc,0)$ and hence $f(s)$ is well-defined. We denote by $s^-$ the sequence $(s_1,s_2,\dotsc,s_{k-1})\in[\overline{p}]^{k-1}$. 
The main strategy is to induct on $k$, computing $M(s)$ and $m(s)$ from $M(s^-)$ and $m(s^-)$ respectively. 
(This is achieved in Theorems~\ref{thm: prime power M} and~\ref{thm:prime power m} below.)
With this in mind, a first key step is to investigate the relationship between the sets $\Omega(s^-)$ and $\Omega(s)$, for $\phi(s)\in\Lin(P_{p^k})$ and $k\in\mathbb{N}_{\ge 2}$. This is done in the following lemma. 

\begin{lemma}\label{lem: DinOm} 
	Let $p$ be an odd prime and $k\in\mathbb{N}_{\ge 2}$. Let $s=(s_1,\dotsc,s_k)\in[\overline{p}]^k$ and write $s^-=(s_1,\ldots, s_{k-1})$. Then 
	$$\mathcal{D}(p,p^{k-1},\Omega(s^-))\subseteq \Omega(s).$$
\end{lemma}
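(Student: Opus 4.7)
The plan is to lift the hypothesised tensor product constituent of $\chi^\lambda\down_{(\fS_{p^{k-1}})^{\times p}}$ up to the intermediate group $K := \fS_{p^{k-1}}\wr P_p$, and then restrict back down to $P_{p^k}$, where Lemma~\ref{lem: easy observation} will produce $\phi(s)$ essentially for free. Set $B := (\fS_{p^{k-1}})^{\times p}$, $K := \fS_{p^{k-1}}\wr P_p$ and $L := P_{p^{k-1}}\wr P_p = P_{p^k}$, so $B\le K$ and $L\le K$.

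For $\lambda\in\mathcal{D}(p, p^{k-1}, \Omega(s^-))$, I would fix $\mu_1,\ldots,\mu_p\in\Omega(s^-)$ not all equal such that $\pi := \chi^{\mu_1}\times\cdots\times\chi^{\mu_p}$ is a constituent of $\chi^\lambda\down_B$. The cyclic group $P_p$ acts on $B$ by cyclic permutation of the factors; since $p$ is prime and the tuple $(\mu_1,\ldots,\mu_p)$ is non-constant, the stabiliser of $\pi$ in $P_p$ is trivial. Clifford theory then forces $\zeta := \pi\up^K_B$ to be irreducible, and Frobenius reciprocity applied to $\langle \chi^\lambda\down_B, \pi\rangle\ge 1$ shows that $\zeta$ is a constituent of $\chi^\lambda\down_K$.

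Next, I would apply Mackey's formula to compute $\zeta\down_L$. Because $L$ contains $P_p$, which is a transversal for $B$ in $K$, one has $K = BL$; there is a single $(L,B)$-double coset and the Mackey sum collapses to
\begin{equation*}
\zeta\down_L \;=\; \bigl(\pi\down_{L\cap B}\bigr)\up^L_{L\cap B},
\end{equation*}
with $L\cap B = (P_{p^{k-1}})^{\times p}$. Since each $\mu_i\in\Omega(s^-)$, the character $\phi(s^-)^{\times p}$ is a constituent of $\pi\down_{L\cap B}$, and hence $\phi(s^-)^{\times p}\up^L_{L\cap B}$ is a subcharacter of $\zeta\down_L$.

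Finally, applying Lemma~\ref{lem: easy observation} with $G = P_{p^{k-1}}$ and wreath top $P_p$ (whose irreducible characters are all linear, being abelian) yields
\begin{equation*}
\phi(s^-)^{\times p}\up^L_{L\cap B} \;=\; \sum_{\theta\in\Lin(P_p)} \mathcal{X}(\phi(s^-);\theta),
\end{equation*}
which contains $\mathcal{X}(\phi(s^-);\phi(s_k)) = \phi(s)$ by the recursive definition of $\phi(s)$. Chaining the inclusions shows that $\phi(s)$ is a constituent of $\chi^\lambda\down_{P_{p^k}}$, i.e.~$\lambda\in\Omega(s)$. The only delicate step is the justification that the stabiliser of $\pi$ in $P_p$ is trivial (so that $\zeta$ is genuinely irreducible and the Mackey sum has only one term), but this is immediate from $p$ being prime and the $\mu_i$ being not all equal; everything else is a clean assembly of wreath-product character theory.
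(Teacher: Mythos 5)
Your proof is correct and follows essentially the same route as the paper's: both arguments lift the non-constant constituent $\chi^{\mu_1}\times\cdots\times\chi^{\mu_p}$ to the irreducible induced character of $\fS_{p^{k-1}}\wr P_p$ (where the paper invokes its stated classification of $\Irr(G\wr C_p)$, you invoke Clifford theory with trivial stabiliser, which is the same fact), then restrict to $P_{p^k}$ via the one-double-coset Mackey formula (the paper cites Isaacs, Problem 5.2 for exactly this), and conclude with Lemma~\ref{lem: easy observation}. There are no gaps; the differences are purely cosmetic.
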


\begin{proof}
	We consider the following subgroups of $\fS_{p^k}$: let $P=P_{p^k}=P_{p^{k-1}}\wr P_p$, let $B=(P_{p^{k-1}})^{\times p}$ be the base group of the wreath product $P$, and let $H=(\fS_{p^{k-1}})^{\times p}\le \fS_{p^k}$, naturally containing $B$. Also let $W=PH=H\rtimes P_p\le\fS_{p^k}$, so $W\cong\fS_{p^{k-1}}\wr P_p$. 
	
	Let $\lambda\in\mathcal{D}(p,p^{k-1},\Omega(s^-))$, so $\chi^\lambda\down_H$ has a constituent $\psi:=\chi^{\mu_1}\times\cdots\times\chi^{\mu_p}\in\Irr(H)$ such that $\mu_1\ldots, \mu_p\in\Omega(s^-)$ are not all equal. Since $\chi^\lambda\in\Irr(\fS_{p^k}\mid\psi)$, there exists $\chi\in\Irr(W\mid\psi)$ such that $\chi$ is a constituent of $\chi^\lambda\down_W$. Since $\mu_1,\dotsc,\mu_p$ are not all equal, then $\chi=\psi\up^W_H$ by the description of $\Irr(\fS_{p^k}\wr P_p)$ given in Section~\ref{sec:wreath}. Since $PH=W$ and $P\cap H=B$, we have that $\chi\down_P=\psi\down_B\up^P$ by \cite[Problem 5.2]{IBook}. Moreover, $\psi\down_B = \chi^{\mu_1}\down_{P_{p^{k-1}}}\times\cdots\times\chi^{\mu_p}\down_{P_{p^{k-1}}}$, so $\phi(s^-)^{\times p}$ is a constituent of $\psi\down_B$. Thus $\phi(s^-)^{\times p}\up^P$ is a direct summand of $\chi^\lambda\down_P$. By Lemma~\ref{lem: easy observation}, we deduce that $\phi(s)=\mathcal{X}(\phi(s^-);\phi_{s_k})$ is a constituent of $\chi^\lambda\down_P$. Thus $\lambda\in\Omega(s)$, as desired.
\end{proof}

Given a sequence $s$ and its shorter subsequence $s^-$, 
Lemma~\ref{lem: DinOm} allows us to deduce information about $\Omega(s)$ from the knowledge of $\Omega(s^-)$. In order to exploit this, we need to understand the structure of $\Omega(t)$ for short or `minimal' sequences $t$. This is done in the next two lemmas.

\begin{lemma}\label{lem:n=1} 
	Let $p$ be an odd prime and let $x\in[\overline{p}]$. Then 
	$$\Omega(x) = \begin{cases}
	\mathcal{P}(p)\setminus\{(p-1,1),(2,1^{p-2})\}&\mathrm{if}\ x=0,\\
	\mathcal{P}(p)\setminus\{(p),(1^p) \} = \mathcal{B}_p(p-1) &\mathrm{if}\ x\in[p-1].
	\end{cases}$$
\end{lemma}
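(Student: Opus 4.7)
The plan is to use Lemma~\ref{lem: MN} to compute $\chi^\lambda\down_{P_p}$ explicitly for each $\lambda \vdash p$, and then determine when the inner product with $\phi_x$ is non-zero. Since $P_p$ is cyclic of order $p$, its regular character satisfies $\psi = \sum_{i=0}^{p-1} \phi_i$, so the multiplicity of any $\phi_i$ in $\psi$ equals $1$.

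First I would handle the non-hook partitions. If $\lambda\vdash p$ is not a hook, Lemma~\ref{lem: MN} yields $\chi^\lambda\down_{P_p} = m\cdot\psi$ with $mp = \chi^\lambda(1) \ge 1$, so $m \ge 1$. Hence every linear character of $P_p$ (in particular each $\phi_x$) appears as a constituent, so every non-hook partition lies in $\Omega(x)$ for all $x\in[\overline{p}]$.

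Next I would analyse the hooks $\lambda = (p-y,1^y)$ for $y\in[\overline{p}]$, which have leg length $y$ and dimension $\binom{p-1}{y}$. By Lemma~\ref{lem: MN}, $\chi^\lambda\down_{P_p} = m'\psi + (-1)^y\triv_{P_p}$ with $m'p+(-1)^y = \binom{p-1}{y}$. The multiplicity of $\phi_x$ in this restriction is $m' + \delta_{x,0}(-1)^y$. For $x \neq 0$ this is zero iff $\binom{p-1}{y}=(-1)^y$; since $\binom{p-1}{y}\ge 2$ for $1\le y\le p-2$, the only solutions (for $p$ odd) are $y=0$ and $y=p-1$, corresponding to $\lambda = (p)$ and $\lambda = (1^p)$. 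This gives $\Omega(x) = \mathcal{P}(p)\setminus\{(p),(1^p)\} = \mathcal{B}_p(p-1)$ for $x\in[p-1]$. For $x=0$ the multiplicity $m' + (-1)^y$ equals $\frac{1}{p}\bigl(\binom{p-1}{y} + (p-1)(-1)^y\bigr)$, which vanishes iff $\binom{p-1}{y}= (1-p)(-1)^y$. For $p$ odd this forces $y$ odd with $\binom{p-1}{y}=p-1$, giving exactly $y = 1$ and $y = p-2$, i.e.~$\lambda \in\{(p-1,1),(2,1^{p-2})\}$. Hence $\Omega(0) = \mathcal{P}(p)\setminus\{(p-1,1),(2,1^{p-2})\}$.

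There is no real obstacle here: the argument reduces entirely to a bookkeeping exercise using $\psi$-decomposition from Lemma~\ref{lem: MN} and the elementary fact that $\binom{p-1}{y}\ge 2$ for $1\le y\le p-2$. The only mild subtlety is to track the sign $(-1)^y$ carefully and to remember that $\phi_x$ with $x\ne 0$ does \emph{not} pick up the extra $(-1)^y\triv_{P_p}$ contribution, which is precisely what makes the two cases of the statement look different.
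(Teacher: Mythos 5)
Your proof is correct and follows exactly the route the paper intends: the paper's own proof is just the one-line remark that the lemma is a direct consequence of Lemma~\ref{lem: MN}, and your case analysis (non-hooks give multiples of the regular character; hooks $(p-y,1^y)$ give $m'\psi+(-1)^y\triv_{P_p}$, with the binomial bookkeeping singling out $(p),(1^p)$ for $x\ne 0$ and $(p-1,1),(2,1^{p-2})$ for $x=0$) is precisely the computation being left to the reader. No gaps.
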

\begin{proof}
This is a direct consequence of Lemma~\ref{lem: MN}. 
\end{proof}

\begin{lemma}\label{lem: 001}
	Let $p$ be an odd prime, $k\in\mathbb{N}_0$ and $s=(0,\dotsc,0,x)\in [\overline{p}]^{k+1}$ where $x\ne 0$. Then $\Omega(s)=\mathcal{B}_{p^{k+1}}(p^{k+1}-1)$, except if $(p,k)=(3,1)$, in which case $\Omega\big((0,1)\big)=\mathcal{B}_9(8)\setminus\{(3^3)\}$. Moreover, for all such $p$, $k$ and $s$, $\big\langle \chi^{(p^{k+1}-1,1)}\down_{P_{p^{k+1}}}, \phi(s)\big\rangle = 1$.
\end{lemma}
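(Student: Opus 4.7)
I would split the proof into three parts: the upper containment, the multiplicity identity for $\chi^{(p^{k+1}-1,1)}$, and the matching lower containment (with the $(p,k)=(3,1)$ exception handled separately).

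For the upper bound $\Omega(s)\subseteq\mathcal{B}_{p^{k+1}}(p^{k+1}-1)$, note that the complement contains only $(p^{k+1})$ and $(1^{p^{k+1}})$, whose $\fS_{p^{k+1}}$-characters both restrict to $\triv_{P_{p^{k+1}}}$ (using $P_{p^{k+1}}\le\fA_{p^{k+1}}$ since $p$ is odd), and $\phi(s)\neq\triv$. To establish the ``moreover'' claim I would write $\chi^{(p^{k+1}-1,1)}\downarrow_{P_{p^{k+1}}} = \pi_k - \triv$, where $\pi_k$ is the permutation character of $P_{p^{k+1}}=P_{p^k}\wr P_p$ on $[p^{k+1}]=[p]\times[p^k]$. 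The recursive formula $\pi_k((g_1,\dotsc,g_p;\sigma)) = \sum_{i:\sigma(i)=i}\pi_{k-1}(g_i)$ shows $\pi_k$ vanishes whenever the $P_p$-component $\sigma$ is non-trivial, while $\phi(s)((g_1,\dotsc,g_p;\sigma)) = \phi_x(\sigma)$ by construction. Splitting $\langle\pi_k,\phi(s)\rangle$ over $\sigma\in P_p$ and using $\langle\pi_{k-1},\triv_{P_{p^k}}\rangle=1$ (from transitivity) yields $\langle\pi_k,\phi(s)\rangle = 1$, hence $\langle\chi^{(p^{k+1}-1,1)}\downarrow_{P_{p^{k+1}}}, \phi(s)\rangle = 1$. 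This also places $(p^{k+1}-1,1)$ and (by conjugation, using that $P_{p^{k+1}}\le\fA_{p^{k+1}}$) $(2,1^{p^{k+1}-2})$ in $\Omega(s)$.

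For the lower bound when $(p,k)\neq (3,1)$: the base case $k=0$ is Lemma~\ref{lem:n=1}. For $k\ge 1$, Lemma~\ref{lem: DinOm} applied with $s^- = (0,\dotsc,0)$ gives $\mathcal{D}(p,p^k,\Omega(\triv_{P_{p^k}}))\subseteq\Omega(s)$. By the description of $\Omega(\triv_{P_{p^k}})$ from \cite{GL1}, this set contains $\mathcal{B}_{p^k}(p^k-1)$ as well as $(p^k)$ and $(1^{p^k})$, so Proposition~\ref{prop: BinD} yields $\mathcal{B}_{p^{k+1}}(p^{k+1}-p-1)\subseteq\Omega(s)$. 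I would close the gap up to $\mathcal{B}_{p^{k+1}}(p^{k+1}-1)$ by handling ``boundary'' partitions $\lambda$ with $\lambda_1$ or $l(\lambda)$ in $\{p^{k+1}-p,\dotsc,p^{k+1}-1\}$: writing $\lambda = (\lambda_1,\mu)$ and using Lemma~\ref{lem: iteratedLR} with $b_1=p^k-r$ and $b_i=p^k$ for $i\ge 2$ (where $r = p^{k+1}-\lambda_1$), I obtain the decomposition $c^\lambda_{(p^k-r,\mu),(p^k),\dotsc,(p^k)}=c^\mu_{\mu,\emptyset,\dotsc,\emptyset}=1$. Provided $(p^k-r,\mu)\in\Omega(\triv_{P_{p^k}})$ (which covers most $\mu$ by \cite{GL1}), this places $\lambda$ in $\mathcal{D}$; the few residual $\mu$ for which $(p^k-r,\mu)$ is an exception of $\Omega(\triv_{P_{p^k}})$ are treated by choosing alternative decompositions using $(1^{p^k})$ in place of some $(p^k)$ or a different shape for the first summand.

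The exceptional case $(p,k)=(3,1)$ requires separate treatment, since $\Omega(\triv_{P_3})=\{(3),(1^3)\}$ is too small to apply the above argument. To show $(3^3)\notin\Omega((0,1))$, I decompose $\chi^{(3^3)}\downarrow_W$ for $W=\fS_3\wr P_3$ into type-(a) and type-(b) constituents as in Section~\ref{sec:wreath}. The type-(a) contribution to $\langle\chi^{(3^3)}\downarrow_{P_9},\phi((0,1))\rangle$ vanishes because $Z^{(2,1)}_\triv=0$ (Lemma~\ref{lem: MN}) and every size-three $P_3$-orbit of triples $(\mu_1,\mu_2,\mu_3)\vdash 3$ with $c^{(3^3)}_{\mu_1,\mu_2,\mu_3}>0$ involves $(2,1)$. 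The type-(b) contribution reduces to $d^{(3^3)}_{(3),\phi_1}+d^{(3^3)}_{(1^3),\phi_1}$, where $d^{(3^3)}_{\mu,\theta}:=\langle\chi^{(3^3)}\downarrow_W,\mathcal{X}(\chi^\mu;\theta)\rangle$. Three ingredients then pin these down: (i) self-conjugacy $\chi^{(3^3)}\otimes\mathrm{sgn}=\chi^{(3^3)}$ (using $\mathrm{sgn}\downarrow_W = \mathcal{X}(\chi^{(1^3)};\triv)$) forces $d^{(3^3)}_{(3),\theta}=d^{(3^3)}_{(1^3),\theta}$ for all $\theta$; (ii) rationality of $\chi^{(3^3)}$ together with the Galois action swapping $\phi_1\leftrightarrow\phi_2$ forces $d^{(3^3)}_{(3),\phi_1}=d^{(3^3)}_{(3),\phi_2}$; (iii) $\sum_\theta d^{(3^3)}_{(3),\theta}=c^{(3^3)}_{(3),(3),(3)}=1$. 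Together these give $d^{(3^3)}_{(3),\phi_1}=0$, hence the multiplicity is zero. The remaining partitions in $\mathcal{B}_9(8)\setminus\{(3^3)\}$ are handled by combining $\mathcal{D}(3,3,\{(3),(1^3)\})\subseteq\Omega((0,1))$, the hooks $(8,1),(2,1^7)\in\Omega((0,1))$ already secured by the moreover claim, and a short list of direct character computations. The main obstacle throughout is the boundary bookkeeping in the lower-bound step for $k\ge 1$, since Proposition~\ref{prop: BinD} alone is not quite strong enough to reach $\mathcal{B}_{p^{k+1}}(p^{k+1}-1)$.
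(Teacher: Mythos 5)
Your argument for the upper containment and for the ``moreover'' statement is fine; in fact your computation of $\big\langle \chi^{(p^{k+1}-1,1)}\down_{P_{p^{k+1}}},\phi(s)\big\rangle$ via the permutation character of $P_{p^k}\wr P_p$ on $p^{k+1}$ points (which vanishes off the base group, while $\phi(s)$ is inflated from the top $P_p$) is a clean, more elementary route than the paper's, which instead extracts the constituent $\mathcal{X}\big((p^k);(p-1,1)\big)$ via \cite{PW} and controls the remaining summand using \cite{GTT} and \cite{GL1}. Your sketch of the exceptional case $(p,k)=(3,1)$ is also more detailed than the paper, which simply says it can be checked directly.

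The genuine gap is in the lower containment for $k\ge 1$. You assert that $\Omega(\triv_{P_{p^k}})$ ``contains $\mathcal{B}_{p^k}(p^k-1)$'': this is false, since by \cite[Theorem A]{GL1} the partitions $(p^k-1,1)$ and $(2,1^{p^k-2})$ are \emph{not} in $\Omega(\triv_{P_{p^k}})$, so the largest square box contained in it is $\mathcal{B}_{p^k}(p^k-2)$. Consequently Proposition~\ref{prop: BinD} (applied with $t=p^k-2$, together with monotonicity of $\mathcal{D}$ and Lemma~\ref{lem: DinOm}) only yields $\mathcal{B}_{p^{k+1}}(p^{k+1}-2p-1)\subseteq\Omega(s)$, not the bound $\mathcal{B}_{p^{k+1}}(p^{k+1}-p-1)$ you claim; the boundary region you must then handle is correspondingly larger, namely all $\lambda$ with $\lambda_1$ (or $l(\lambda)$) in $\{p^{k+1}-2p,\dotsc,p^{k+1}-1\}$, and this part of the argument is not actually carried out. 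Your proposed patch -- the decomposition $c^\lambda_{(p^k-r,\mu),(p^k),\dotsc,(p^k)}=c^\mu_{\mu,\emptyset,\dotsc,\emptyset}$ via Lemma~\ref{lem: iteratedLR} with $r=p^{k+1}-\lambda_1$ -- does not even parse in all the cases it must cover: for $k=1$ one has $r$ up to $2p>p^k$, so $(p^k-r,\mu)$ is not defined, and even when $r\le p$ one must separately exclude the cases where $(p^k-r,\mu)$ equals $(p^k-1,1)$ or $(2,1^{p^k-2})$, or where the $p$ factors coincide (e.g.\ $\lambda=(p^2-p,p)$ forces all factors $(p)$ in your scheme), which is precisely the bookkeeping you defer. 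The paper sidesteps all of this by quoting \cite[Proposition~3.8]{GL1}, which states exactly that $\mathcal{D}\big(p,p^k,\Omega(\triv_{P_{p^k}})\big)=\mathcal{B}_{p^{k+1}}(p^{k+1}-2)$; with that input, Lemma~\ref{lem: DinOm} gives $\mathcal{B}_{p^{k+1}}(p^{k+1}-2)\subseteq\Omega(s)$ at once, and only $(p^{k+1})\notin\Omega(s)$ and $(p^{k+1}-1,1)\in\Omega(s)$ remain, the latter being supplied by the multiplicity-one computation. To repair your write-up you should either invoke that result of \cite{GL1} directly, or genuinely prove the stronger statement that partitions with $\lambda_1>p^{k+1}-2p-1$ and $\lambda_1\le p^{k+1}-2$ lie in $\mathcal{D}\big(p,p^k,\Omega(\triv_{P_{p^k}})\big)$, which requires a case analysis you have only gestured at.
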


\begin{proof}
	The assertion can be checked directly if $(p,k)=(3,1)$ and follows from Lemma~\ref{lem:n=1} when $k=0$. Now assume $k\ge 2$ if $p=3$, or $k\ge 1$ if $p\ge 5$. To ease the notation we set $P:=P_{p^{k+1}}$ and $\mathcal{D}:=\mathcal{D}(p,p^k, \Omega(s^-))$ for the rest of the proof. Since $s^-=(0,\ldots, 0)$, by \cite[Proposition 3.8]{GL1} we know that $\mathcal{D}=\mathcal{B}_{p^{k+1}}(p^{k+1}-2)$. Using Lemma~\ref{lem: DinOm}, we deduce that $\mathcal{B}_{p^{k+1}}(p^{k+1}-2)\subseteq\Omega(s)$.
	
	Since $\Omega(s)$ is closed under conjugation, in order to conclude that $\Omega(s)=\mathcal{B}_{p^{k+1}}(p^{k+1}-1)$ it remains to show that $(p^{k+1})\notin\Omega(s)$ and that $(p^{k+1}-1,1)\in\Omega(s)$. 
The first assertion is obvious as $\chi^{(p\6{k+1})}\down_P=\triv_P\ne\phi(s)$. 
On the other hand, if $\lambda=(p^{k+1}-1,1)$, then \cite[Corollary 9.1]{PW} implies that $\mathcal{X}\big((p^k);(p-1,1)\big)$ is an irreducible constituent of $\chi^\lambda\down^{\fS_{p^{k+1}}}_{\fS_{p^k}\wr\fS_p}$ appearing with multiplicity $1$. Moreover,
	$$\mathcal{X}\big((p^k); (p-1,1)\big)\down^{\fS_{p^k}\wr\fS_p}_P = \mathcal{X}\left(\chi^{(p^k)}\down_{P_{p^k}}^{\fS_{p^k}}; \chi^{(p-1,1)}\down_{P_p}^{\fS_p}\right) = \sum_{z=1}^{p-1} \mathcal{X}(\triv_{P_{p^k}}; \phi_z).$$ 
	This shows that $\phi(s)$ is an irreducible constituent of $\chi^\lambda\down_P$. Hence $(p^{k+1}-1,1)\in\Omega(s)$.
	
	Keeping $\lambda=(p^{k+1}-1,1)$, we now wish to show that $\langle\chi^\lambda\down_P,\phi(s)\rangle=1$. Let $H\cong(\fS_{p^k})^{\times p}$ and $B\cong (P_{p^k})^{\times p}$ with $B\le H$. From \cite[Lemma 3.2]{GTT} and the Littlewood--Richardson rule we see that $\chi^\lambda\down_H = (p-1)\triv_H+\Theta$, where 
	$$\Theta=(\chi^\mu\times\triv\times\cdots\times\triv)+(\triv\times\chi^\mu\times\cdots\times\triv)+\cdots +(\triv\times\cdots\times\triv\times\chi^\mu)\ \text{and}\ \mu=(p^k-1,1).$$
	(Here $\triv$ denotes $\triv_{\fS_{p^k}}$.) Since we already know that $\mathcal{X}\big((p^k); (p-1,1)\big)$ is a constituent of $\chi^\lambda\down_{\fS_{p^k}\wr \fS_p}$, then by the description of irreducible characters of wreath products \cite[\textsection 4.3]{JK},
	$$\chi^\lambda\down_{\fS_{p^k}\wr P_p} = \sum_{i=1}^{p-1}\mathcal{X}(\triv_{\fS_{p^k}}; \phi_i) + (\underbrace{\chi^\mu\times\triv\times\cdots\times\triv}_{=:\alpha})\up^{\fS_{p^k}\wr P_p}_H.$$
	Finally, $\langle\mathcal{X}(\triv_{\fS_{p^k}};\phi_i)\down_P, \phi(s)\rangle = \langle\mathcal{X}(\phi(s^-),\phi_i), \mathcal{X}(\phi(s^-),\phi_x)\rangle = \delta_{ix}$, and
	$$\langle \alpha\up^{\fS_{p^k}\wr P_p}_H\down_P, \phi(s)\rangle = \langle \alpha\down_B\up^P, \phi(s)\rangle = \langle\alpha\down_B, \phi(s)\down_B\rangle 
	= \langle\chi^\mu\down_{P_{p^k}}, \triv_{P_{p^k}}\rangle = 0,$$
	where the first equality in the line above follows from \cite[Problem 5.2]{IBook} and the last one follows from \cite[Theorem A]{GL1}. Since $x\in[p-1]$, we deduce that $\langle\chi^\lambda\down_P,\phi(s)\rangle=1$.
\end{proof}

Before proceeding with the determination of $m(s)$ and $M(s)$, we encourage the reader to recall the notation introduced in Definition~\ref{def:f and eta}. It turns out that the positions $f(s)$ and $g(s)$ of the leading non-zeros in the sequence $s$ govern the form of $\Omega(s)$. 

\smallskip

First, we determine the value of $M(s)$. This is done in Proposition~\ref{prop: upper bound} and Theorem~\ref{thm: prime power M} below, for all odd primes.

\begin{prop}\label{prop: upper bound} 
	Let $p$ be an odd prime, $k\in\mathbb{N}$, and $s\in[\overline{p}]^k\setminus U_k(0)$. 
	Then $M(s)\le p^k-p^{k-f(s)}$.
\end{prop}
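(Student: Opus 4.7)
The plan is to prove the bound by induction on $k$, with the base case $k=1$ handled directly by Lemma~\ref{lem:n=1} (which gives $\Omega((x))=\mathcal{B}_p(p-1)$ for $x\ne 0$, so $M((x))=p-1=p^1-p^{1-1}$). Before starting, I would record that $\Omega(\phi(s))$ is closed under conjugation of partitions: since $p$ is odd, $P_{p^k}$ consists of even permutations, so $\operatorname{sgn}\down_{P_{p^k}}=\triv_{P_{p^k}}$ and hence $\chi^\lambda\down_{P_{p^k}}=\chi^{\lambda'}\down_{P_{p^k}}$. Therefore it suffices to show $\lambda_1\le p^k-p^{k-f(s)}$ for every $\lambda\in\Omega(\phi(s))$.

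For the inductive step with $k\ge 2$, I would split on $f(s)$. If $f(s)=k$, then $s=(0,\ldots,0,s_k)$ with $s_k\ne 0$, and Lemma~\ref{lem: 001} immediately gives $\Omega(\phi(s))\subseteq\mathcal{B}_{p^k}(p^k-1)$, which is the required bound. Otherwise $f(s)<k$, so $s^-=(s_1,\ldots,s_{k-1})\in[\overline{p}]^{k-1}\setminus U_{k-1}(0)$ with $f(s^-)=f(s)$, and the inductive hypothesis applies to $s^-$.

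The main step is then the following descent. Given $\lambda\in\Omega(\phi(s))$, write $P_{p^k}=P_{p^{k-1}}\wr P_p$ with base group $B=(P_{p^{k-1}})^{\times p}\le H:=(\fS_{p^{k-1}})^{\times p}$. From $\phi(s)=\mathcal{X}(\phi(s^-);\phi_{s_k})$ we see that $\phi(s)\down_B=\phi(s^-)^{\times p}$, and hence
\[
\langle\chi^\lambda\down_B,\phi(s^-)^{\times p}\rangle \ge \langle\chi^\lambda\down_{P_{p^k}},\phi(s)\rangle > 0.
\]
Decomposing $\chi^\lambda\down_H$ via iterated Littlewood--Richardson as $\sum c^\lambda_{\mu_1,\ldots,\mu_p}\,\chi^{\mu_1}\times\cdots\times\chi^{\mu_p}$, we obtain $\mu_1,\ldots,\mu_p\vdash p^{k-1}$ with $c^\lambda_{\mu_1,\ldots,\mu_p}>0$ and $\mu_i\in\Omega(\phi(s^-))$ for every $i$. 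By the inductive hypothesis, $\mu_{i,1}\le M(s^-)\le p^{k-1}-p^{k-1-f(s)}$, and iterating Lemma~\ref{lem: LRfirstpart} gives
\[
\lambda_1\le\sum_{i=1}^p \mu_{i,1}\le p\bigl(p^{k-1}-p^{k-1-f(s)}\bigr)=p^k-p^{k-f(s)},
\]
as required.

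The argument is essentially routine once one sets up the descent to the base group correctly; there is no real obstacle beyond checking that the inductive hypothesis is available (the verification that $f(s^-)=f(s)$ whenever $f(s)<k$) and treating the corner case $f(s)=k$ separately via Lemma~\ref{lem: 001}. The clean bound $p^k-p^{k-f(s)}=p\cdot(p^{k-1}-p^{k-1-f(s)})$ is what makes the induction close on the nose.
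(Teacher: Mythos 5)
Your proof is correct and follows essentially the same route as the paper's: restrict $\chi^\lambda$ to the base group $B=(P_{p^{k-1}})^{\times p}$, use $\phi(s)\down_B=\phi(s^-)^{\times p}$ to force all components $\mu_1,\dotsc,\mu_p$ of some constituent of $\chi^\lambda\down_{(\fS_{p^{k-1}})^{\times p}}$ into $\Omega(s^-)$, and conclude via Lemma~\ref{lem: LRfirstpart}, with Lemma~\ref{lem: 001} covering the case $f(s)=k$. The only cosmetic differences are inducting on $k$ rather than $k-f(s)$ and stating the direct bound instead of the paper's contrapositive.
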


\begin{proof}
	We proceed by induction on $k-f(s)$. The base case $f(s)=k$ follows from Lemma~\ref{lem: 001}. 
	Now suppose that $k\geq 2$ and that $f(s)<k$. In this setting we have that $f(s)=f(s^-)$. Let $\lambda\notin\mathcal{B}_{p^k}(p^k-p^{k-f(s)})$, so we may without loss of generality assume 
	$$\lambda_1>p^k-p^{k-f(s)}=p(p^{k-1}-p^{k-1-f(s^-)}).$$ 
	Lemma~\ref{lem: LRfirstpart} implies that for each irreducible constituent $\chi^{\mu_1}\times\cdots\times\chi^{\mu_p}$ of $\chi^\lambda\down_{(\fS_{p^{k-1}})^{\times p}}$, there exists some $i\in[p]$ such that $(\mu_i)_1>p^{k-1}-p^{k-1-f(s^-)}$. 
	By the inductive hypothesis, $M(s^-)\leq p^{k-1}-p^{k-1-f(s^-)}$, and hence $\mu_i\notin \Omega(s^-)$ since $\mu_i\notin\mathcal{B}_{p^{k-1}}(p^{k-1}-p^{k-1-f(s^-)})$.
	
	Suppose that $\lambda\in\Omega(s)$, then $\phi(s)\down_{(P_{p^{k-1}})^{\times p}}=\phi(s^-)^{\times p}$ is a constituent of $\chi^\lambda\down_{(P_{p^{k-1}})^{\times p}}$. Since $\phi(s^-)^{\times p}$ is irreducible, it must be a constituent of $$\chi^{\mu_1}\down_{P_{p^{k-1}}}\times\cdots\times\chi^{\mu_p}\down_{P_{p^{k-1}}}$$ 
	for some $\chi^{\mu_1}\times\cdots\times\chi^{\mu_p}$ as described above. In particular, this implies that $\mu_1,\dotsc,\mu_p\in\Omega(s^-)$, a contradiction. Hence $\lambda\notin\Omega(s)$, and we conclude that $\Omega(s)\subseteq \mathcal{B}_{p^k}(p^k-p^{k-f(s)})$.
\end{proof}

\begin{thm}\label{thm: prime power M}
	Let $p$ be an odd prime, $k\in\mathbb{N}$, and $s\in[\overline{p}]^k\setminus U_k(0)$. 
	Then $M(s) = p^k-p^{k-f(s)}$.
\end{thm}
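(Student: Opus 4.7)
The upper bound $M(s)\le p^k-p^{k-f(s)}$ is furnished by Proposition~\ref{prop: upper bound}, so the task is to exhibit $\lambda^\star\in\Omega(s)$ with $\lambda^\star_1=p^k-p^{k-f(s)}$. My plan is to induct on $k$. The base case $k=1$ is immediate from Lemma~\ref{lem:n=1}, and the inductive subcase $k\ge 2$ with $f(s)=k$ is Lemma~\ref{lem: 001}, which places $(p^k-1,1)\in\Omega(s)$. For the remaining subcase $k\ge 2$ and $1\le f(s)<k$, note $f(s^-)=f(s)$, so by the inductive hypothesis $M(s^-)=p^{k-1}-p^{k-1-f(s)}$, producing some $\mu\in\Omega(s^-)$ with $\mu_1=p^{k-1}-p^{k-1-f(s)}$.

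Writing $\mu=(p^{k-1}-p^{k-1-f(s)},\nu')$ with $\nu'\vdash p^{k-1-f(s)}$, I propose candidates $\lambda^\star=(p^k-p^{k-f(s)},\nu_0)\vdash p^k$, choosing the tail $\nu_0$ according to $s_k$: take $\nu_0=(p^{k-f(s)})$ for $s_k=0$, and $\nu_0=(p^{k-f(s)}-1,1)$ for $s_k\ne 0$, with $\nu'=(p^{k-1-f(s)})$ in both cases (which forces $\mu$ to be a two-row partition; I would ensure this specific $\mu$ lies in $\Omega(s^-)$ by invoking the $s_k=0$ branch of the same inductive step applied to $s^-$, iterating down through the nonzero entries of $s$). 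Lemma~\ref{lem: iteratedLR} reduces $c^{\lambda^\star}_{\mu^{\times p}}$ to $c^{\nu_0}_{(p^{k-1-f(s)})^{\times p}}$, which equals $1$ in the first case and equals $p-1$ in the second (the latter computed by iterated Pieri: each of the $p$ steps adds a horizontal strip of size $p^{k-1-f(s)}$, and the step at which the lone second-row box is adjoined can be any of positions $2,3,\dots,p$).

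At the intermediate wreath product $W=\fS_{p^{k-1}}\wr\fS_p$, no type (a) constituent of $\chi^{\lambda^\star}\down_W$ can contribute $\chi^{\mu^{\times p}}$ upon restriction to $H=(\fS_{p^{k-1}})^{\times p}$ (type (a) constituents arise from tuples that are not all equal), so I obtain the balance equation
$$c^{\lambda^\star}_{\mu^{\times p}}=\sum_{\beta\in\Irr(\fS_p)}c_\beta\,\beta(1),\qquad c_\beta:=\langle\chi^{\lambda^\star}\down_W,\,\mathcal{X}(\chi^\mu;\chi^\beta)\rangle.$$
Lemmas~\ref{lem: 9.5} and~\ref{lem: 9.6}, combined with the identity $\mathcal{X}(\chi^\mu;\chi^\beta)\down_{P_{p^k}}=\mathcal{X}(\chi^\mu\down_{P_{p^{k-1}}};\chi^\beta\down_{P_p})$ and the linearity of $\mathcal{X}$ in its second argument, ensure that whenever some $\beta$ has $c_\beta>0$ and $\langle\chi^\beta\down_{P_p},\phi_{s_k}\rangle>0$, the character $\phi(s)=\mathcal{X}(\phi(s^-);\phi_{s_k})$ appears in $\chi^{\lambda^\star}\down_{P_{p^k}}$, giving $\lambda^\star\in\Omega(s)$.

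For $s_k=0$ the sum above equals $1$, forcing $\beta\in\{(p),(1^p)\}$; both $\chi^{(p)}=\triv_{\fS_p}$ and $\chi^{(1^p)}$ restrict to $\triv_{P_p}$ (since $P_p\le\fA_p$), yielding $\phi(s)\in\chi^{\lambda^\star}\down_{P_{p^k}}$ immediately. For $s_k\ne 0$ the sum equals $p-1$, and the target distribution is $c_{(p-1,1)}=1$ with all other $c_\beta$ vanishing: Lemma~\ref{lem: MN} then gives $\chi^{(p-1,1)}\down_{P_p}=\sum_{z\ne 0}\phi_z\ni\phi_{s_k}$, closing the argument. The main obstacle is confirming this last distribution and ruling out pathological alternatives such as $c_{(p)}=p-1$ or $c_{(p)}+c_{(1^p)}=p-1$; I expect the resolution to come by exhibiting a second, not-all-equal Littlewood--Richardson decomposition of $\chi^{\lambda^\star}\down_H$ whose type (a) lift to $W$ is incompatible with any distribution concentrated on linear $\beta$, thereby forcing $\mathcal{X}(\chi^\mu;\chi^{(p-1,1)})$ to appear, in the spirit of the PW-style input used to prove Lemma~\ref{lem: 001}.
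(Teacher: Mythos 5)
Your proposal is correct in outline at the ends (the upper bound via Proposition~\ref{prop: upper bound}, the base cases via Lemmas~\ref{lem:n=1} and~\ref{lem: 001}, and the wreath-product bookkeeping through Lemmas~\ref{lem: iteratedLR}, \ref{lem: 9.5} and~\ref{lem: 9.6} is used accurately), but the inductive step has a genuine gap. You insist that the partition $\mu$ supplied by the inductive hypothesis be the two-row partition $(p^{k-1}-p^{k-1-f},\,p^{k-1-f})$. This partition is thin and has first part $M(s^-)$, so by Lemma~\ref{lem: 9.8} it lies in $\Omega(s^-)$ if and only if $(p^{k-1-f})\in\Omega(s_{f+1},\dotsc,s_{k-1})$, i.e.\ if and only if $s_{f+1}=\dotsb=s_{k-1}=0$. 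As soon as $s$ has a second non-zero entry strictly before position $k$ (every type-$4$ sequence with $g(s)<k$, e.g.\ $s=(1,1,0)$), your chosen $\mu$ is provably \emph{not} in $\Omega(s^-)$, and the repair you sketch (``invoke the $s_k=0$ branch applied to $s^-$, iterating down through the non-zero entries'') cannot produce it: at the first intermediate level $j$ with $s_j\ne 0$ your own construction outputs the tail $(p^{j-f}-1,1)$ rather than the rectangle that the next level of your induction requires. So the induction collapses precisely in the type-$4$ cases. The paper avoids this by taking an \emph{arbitrary} $\mu\in\Omega(s^-)$ with $\mu_1=M(s^-)$ --- which is all the inductive hypothesis guarantees --- multiplying every part of $\mu$ by $p$ (adjusting the last part and appending a $1$ when $s_k\ne 0$), and citing \cite[Corollary 9.1]{PW} to get $\mathcal{X}(\mu;\nu)$, with $\nu\in\{(p),(p-1,1)\}$, as a constituent of $\chi^\lambda\down_{\fS_{p^{k-1}}\wr\fS_p}$; restricting to $P_{p^k}$ then yields $\phi(s)$ directly, uniformly in the type of $s$.

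Even where your special $\mu$ is available (namely when $s_{f+1}=\dotsb=s_{k-1}=0$), the $s_k\ne 0$ half of your argument is incomplete, as you acknowledge. The balance equation $\sum_\beta c_\beta\,\beta(1)=p-1$ is consistent with $c_{(p)}+c_{(1^p)}=p-1$ and $c_{(p-1,1)}=0$; and if $\langle\chi^\mu\down_{P_{p^{k-1}}},\phi(s^-)\rangle=1$ (which is all you know --- compare Lemma~\ref{lem: 001}), then Lemma~\ref{lem: 9.6} gives $\langle\mathcal{X}(\mu;\beta)\down_{P_{p^k}},\mathcal{X}(\phi(s^-);\phi_{s_k})\rangle=0$ for every linear $\beta$, so in that scenario no occurrence of $\phi(s)$ is forced at all. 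Excluding this scenario is exactly the missing content, and the ``second Littlewood--Richardson decomposition'' you gesture at is not supplied; the tool that settles it is precisely the plethysm result \cite[Corollary 9.1]{PW} (the same input used in the proof of Lemma~\ref{lem: 001}), which hands you the constituent $\mathcal{X}(\mu;(p-1,1))$ outright instead of trying to infer it from multiplicity counts. In short: upper bound and base cases fine, but the inductive step is broken for type-$4$ sequences and unfinished whenever $s_k\ne 0$, and both defects are repaired in the paper by the single application of \cite[Corollary 9.1]{PW} to a partition obtained by scaling an arbitrary maximal element of $\Omega(s^-)$.
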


\begin{proof}
By Proposition~\ref{prop: upper bound} we know that $\Omega(s)\subseteq\mathcal{B}_{p^k}(p^k-p^{k-f(s)})$. Hence, it remains to exhibit a partition $\lambda\in\Omega(s)$ such that $\lambda_1=p^k-p^{k-f(s)}$. We proceed by induction on $k-f(s)$. For the base case $f(s)=k$, we have $\Omega(s)=\mathcal{B}_{p^k}(p^k-1)$ from Lemma~\ref{lem: 001}, which implies that $\lambda=(p^k-1,1)\in\Omega(s)$. 
	
Suppose now that $f(s)<k$. Then $f(s)=f(s^-)$ and there exists some partition $\mu=(\mu_1,\dotsc,\mu_m)\in\Omega(s^-)$ such that $\mu_1=p^{k-1}-p^{k-1-f(s^-)}$, by the inductive hypothesis. 
We distinguish two cases depending on the value of $s_k$. Let 	
$$\lambda = 
\begin{cases} 
(p\mu_1,p\mu_2,\dotsc,p\mu_{m-1},p(\mu_m-1)+p-1,1) & \mathrm{if}\ s_k\neq 0 , \\ 
\\
(p\mu_1,p\mu_2,\dotsc,p\mu_{m-1},p\mu_m) & \mathrm{if}\  s_k=0,\\ 
\end{cases}\ \ \ \text{and}\ \ \nu=\begin{cases} 
(p-1,1) & \mathrm{if}\ s_k\neq 0 , \\ 
\\
(p) & \mathrm{if}\  s_k=0.\\ 
\end{cases}$$
By \cite[Corollary 9.1]{PW}, $\mathcal{X}\big(\mu; \nu\big)$ is a constituent of $\chi^\lambda\down^{\fS_{p^k}}_{\fS_{p^{k-1}}\wr\fS_p}$, whence
$$\mathcal{X}\big(\mu;\nu\big)\down_{P_{p^k}}\ \Big|\ \chi^\lambda\down_{P_{p^k}}.$$
Since $\mu\in\Omega(s^-)$ by assumption, and since $\nu\in\Omega(s_k)$	by Lemma~\ref{lem:n=1}, we deduce that $$\phi(s)=\mathcal{X}(\phi(s^-);\phi_{s_k})\ \Big|\ \mathcal{X}\big(\chi^\mu\down^{\fS_{p^{k-1}}}_{P_{p^{k-1}}}; \chi^{\nu}\down^{\fS_p}_{P_p}\big)=\mathcal{X}(\mu; \nu)\down_{P_{p^k}}.$$
Thus $\phi(s)$ is an irreducible constituent of $\chi^\lambda\down_{P_{p^k}}$ and therefore $\lambda\in\Omega(s)$. Since $\lambda_1=p^k-p^{k-f(s)}$, the proof is concluded.
\end{proof}

In the proof of Theorem \ref{thm: prime power M}, for every sequence $s$ we exhibited a partition $\lambda\in\Omega(s)$ such that $\lambda_1=M(s)$. We can do much better in fact. In the following lemma we determine all partitions of $\Omega(s)$ having first part of maximal size (i.e. equal to $M(s)$). This will also be very important when proving Theorem A. 

\begin{lemma}\label{lem: 9.8}
	Let $p$ be an odd prime. Let $1\leq z\le k\in\mathbb{N}$, and let $s\in U_k(z)$ be such that $f(s)<k$. Then
	$$\Omega(s) \cap\{\lambda\vdash p^k \mid \lambda_1=M(s)\}^\circ = \{(M(s),\mu) \mid \mu\in\Omega(s_{f(s)+1},\dotsc,s_k) \}^\circ.$$
	In particular, if $z\ge 2$ then $\Omega(s) \cap\{\lambda\vdash p^k\ |\ \lambda_1=M(s)\}^\circ$ contains no thin partitions.
\end{lemma}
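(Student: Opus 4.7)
The plan is to use the wreath product decomposition $P_{p^k} = P_{p^f} \wr P_{p^{k-f}}$ (via associativity, Lemma~\ref{lem:assoc}) to write $\phi(s) = \mathcal{X}(\alpha;\beta)$, where $f := f(s)$, $\alpha := \phi(s_1,\dotsc,s_f) \in \Lin(P_{p^f})$ and $\beta := \phi(s_{f+1},\dotsc,s_k) \in \Lin(P_{p^{k-f}})$. Write $M := M(s) = p^k-p^{k-f}$. Since the sign character restricts trivially to the odd-order group $P_{p^k}$, the set $\Omega(s)$ is closed under conjugation of partitions; Theorem~\ref{thm: prime power M} forces any $\lambda \in \Omega(s)$ with $\lambda_1 = M$ to have the form $(M,\mu)$ for some $\mu \vdash p^{k-f}$. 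It therefore suffices to prove the equivalence $(M,\mu) \in \Omega(s) \iff \mu \in \Omega(\beta)$, after which the set equality follows by taking $^\circ$-closures.

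Embed $P_{p^k} \le W := \fS_{p^f} \wr \fS_{p^{k-f}} \le \fS_{p^k}$, and set $B_P := (P_{p^f})^{\times p^{k-f}} \le B_W := (\fS_{p^f})^{\times p^{k-f}}$. I would first identify which irreducible constituents of $\chi^{(M,\mu)}\down_W$ can contribute to $\phi(s)$ on restriction to $P_{p^k}$. Since $\beta$ is linear, $\mathcal{X}(\alpha;\beta)\down_{B_P} = \alpha^{\times p^{k-f}}$, so any such constituent $\Psi$ must have $\Psi\down_{B_W}$ containing a tensor $\chi^{\nu_1}\times\cdots\times\chi^{\nu_{p^{k-f}}}$ with every $\nu_i \in \Omega(\alpha)$. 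By Lemma~\ref{lem: 001}, $\Omega(\alpha) \subseteq \mathcal{B}_{p^f}(p^f-1)$, so $(\nu_i)_1 \le p^f-1$; combined with Lemma~\ref{lem: LRfirstpart} giving $M = p^{k-f}(p^f-1) = \lambda_1 \le \sum_i (\nu_i)_1$, equality is forced throughout, so $\nu_i = (p^f-1,1)$ for every $i$. By the standard parametrisation of $\Irr(W)$, this restricts the relevant $\Psi$'s to those of the form $\mathcal{X}(\chi^{(p^f-1,1)};\chi^\tau)$, $\tau \vdash p^{k-f}$: any constituent induced from two or more distinct $\chi^{\nu_i}$'s would require those distinct $\nu_i$'s to all equal $(p^f-1,1)$, which is impossible.

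Set $m_\tau := \langle \chi^{(M,\mu)}\down_W,\, \mathcal{X}(\chi^{(p^f-1,1)};\chi^\tau)\rangle$. Combining $\langle \chi^{(p^f-1,1)}\down_{P_{p^f}}, \alpha\rangle = 1$ from Lemma~\ref{lem: 001} with Lemma~\ref{lem: 9.6} (applied term-by-term to the irreducible decomposition of $\chi^\tau\down_{P_{p^{k-f}}}$, using distributivity of $\mathcal{X}$ in its second argument) gives
\[\langle \mathcal{X}(\chi^{(p^f-1,1)};\chi^\tau)\down_{P_{p^k}},\, \phi(s)\rangle = \langle \chi^\tau\down_{P_{p^{k-f}}},\, \beta\rangle,\]
and hence $\langle \chi^{(M,\mu)}\down_{P_{p^k}},\, \phi(s)\rangle = \sum_\tau m_\tau \langle \chi^\tau\down_{P_{p^{k-f}}},\, \beta\rangle$. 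The main technical obstacle is the identity $m_\tau = \delta_{\tau,\mu}$. Lemma~\ref{lem: iteratedLR} provides the constraint $\sum_\tau \chi^\tau(1)\, m_\tau = c^{(M,\mu)}_{(p^f-1,1),\dotsc,(p^f-1,1)} = \chi^\mu(1)$, which is consistent with but does not imply the delta identity; the missing multiplicity-one and vanishing statements I expect to extract from \cite[Corollary 9.1]{PW}, applied iteratively much as in the proofs of Lemma~\ref{lem: 001} and Theorem~\ref{thm: prime power M}. Granting this, the displayed equation collapses to $\langle \chi^{(M,\mu)}\down_{P_{p^k}},\, \phi(s)\rangle = \langle \chi^\mu\down_{P_{p^{k-f}}},\, \beta\rangle$, which proves the equivalence.

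For the thinness claim, when $z \ge 2$ the tail $(s_{f+1},\dotsc,s_k)$ has first non-zero entry at position $g(s)-f$, so by Theorem~\ref{thm: prime power M} any $\mu \in \Omega(\beta)$ satisfies $\mu_1,\, l(\mu) \le p^{k-f}-p^{k-g(s)} < p^{k-f}$. Hence $\mu \notin \{(p^{k-f}),(1^{p^{k-f}})\}$, ruling out $(M,\mu)$ being a two-row partition or a hook; and $M \ge p^k-p^{k-1} \ge 6 > 2$ rules out $\lambda_1 \le 2$. Since thinness is conjugation-invariant, the same conclusion applies to any $\lambda$ in the set with $l(\lambda) = M$.
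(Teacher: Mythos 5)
Your proposal follows essentially the same route as the paper's proof: the same chain of subgroups $B_P\le B_W$ and $P_{p^k}\le W=\fS_{p^f}\wr\fS_{p^{k-f}}$, the same identification (via Lemma~\ref{lem: 001} and the first-row bound of Lemma~\ref{lem: LRfirstpart}) that only constituents of $\chi^{(M,\mu)}\down_W$ of the form $\mathcal{X}(\chi^{(p^f-1,1)};\chi^\tau)$ can contribute $\phi(s)$, the same evaluation $\langle\mathcal{X}(\chi^{(p^f-1,1)};\chi^\tau)\down_{P_{p^k}},\phi(s)\rangle=\langle\chi^\tau\down_{P_{p^{k-f}}},\beta\rangle$ via the multiplicity-one part of Lemma~\ref{lem: 001} and Lemma~\ref{lem: 9.6}, and the same counting input $c^{(M,\mu)}_{(p^f-1,1),\dotsc,(p^f-1,1)}=\chi^\mu(1)$ from Lemma~\ref{lem: iteratedLR}. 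The reduction to partitions of the form $(M,\mu)$ and the thinness argument for $z\ge 2$ are also sound (the paper gets the latter even more cheaply, from $\phi(s_{f+1},\dotsc,s_k)\ne\triv$).

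The one step you leave open, $m_\tau=\delta_{\tau,\mu}$, is where you underestimate what you have already assembled. You assert that the constraint $\sum_\tau\chi^\tau(1)\,m_\tau=\chi^\mu(1)$ ``does not imply the delta identity'' and hope to extract both the multiplicity-one and the vanishing statements from \cite[Corollary 9.1]{PW} applied iteratively; but that corollary is needed only for the single positivity statement $m_\mu\ge 1$ (this is exactly how the paper invokes it at this point), and it does not directly give vanishing of the other coefficients. Once $m_\mu\ge 1$ is granted, the delta identity is immediate from your own constraint: since every $m_\tau\ge 0$ and $\chi^\tau(1)\ge 1$, the chain $\chi^\mu(1)=\sum_\tau m_\tau\chi^\tau(1)\ge m_\mu\chi^\mu(1)\ge\chi^\mu(1)$ forces $m_\mu=1$ and $m_\tau=0$ for all $\tau\ne\mu$. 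The paper packages this same point slightly differently: it writes $\chi^{(M,\mu)}\down_W=\mathcal{X}(\chi^{(p^f-1,1)};\chi^\mu)+\Delta$, computes $\langle\chi^{(M,\mu)}\down_{B_P},\alpha^{\times p^{k-f}}\rangle=\chi^\mu(1)$, observes that the summand $\mathcal{X}(\chi^{(p^f-1,1)};\chi^\mu)$ already accounts for all of it, and concludes $\langle\Delta\down_{B_P},\alpha^{\times p^{k-f}}\rangle=0$, so $\Delta\down_{P_{p^k}}$ contains no copy of $\phi(s)$. So your blueprint is correct and closes with the one-line nonnegativity argument above; no further input from \cite{PW} beyond the positivity you (and the paper) cite is needed.
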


\begin{proof}
	Let $f=f(s)$, $t=(s_1,\ldots, s_f)$ and $u=(s_{f+1},\dotsc,s_k)$.
	Let $W=\fS_{p^f}\wr \fS_{p^{k-f}}\leq \fS_{p^k}$ and let $Y$ be the base group of the wreath product $W$, namely $Y=(\fS_{p^f})^{\times p^{k-f}}\leq W$. 
	Let $P=P_{p^k}=P_{p^f}\wr P_{p^{k-f}}\leq W$, and finally denote by $B$ the base group of $P$, that is, $B=(P_{p^f})^{\times p^{k-f}}\leq Y$.
	
	Let $\lambda=(M(s), \mu)\in\mathcal{P}(p^k)$ for some $\mu\in\mathcal{P}(p^{k-f})$. It suffices to prove the following two statements:
	\begin{itemize}\setlength\itemsep{0.5em}
		\item[(i)] $\left\langle\chi^{\lambda}\down_P, \phi(s)\right\rangle=\left\langle\mathcal{X}((p^f-1,1); \mu)\down_P, \phi(s)\right\rangle;$ and
		\item[(ii)] $\left\langle\mathcal{X}((p^f-1,1); \mu)\down_P, \phi(s)\right\rangle > 0$ if and only if $\mu\in\Omega(u)$.
	\end{itemize}
	The first assertion of the lemma then follows, since $\Omega(s)$ is closed under conjugation. The second statement follows simply from the observation that if $z\ge 2$ then $u\ne(0,\dotsc,0)\in[\overline{p}]^{k-f}$, and hence $\{(p^{k-f}), (1^{p^{k-f}})\}\cap\Omega(u)=\emptyset$.
	
	We now prove (i) and (ii). For convenience, let $\alpha=(p^f-1,1)$ and $q=p^{k-f}$.
	
	\smallskip
	
	\noindent\textbf{(i)} By Theorem~\ref{thm: prime power M}, $M(s)=p^k-p^{k-f}=q(p^f-1)$. Hence Lemma~\ref{lem: LRfirstpart} implies that given $\mu_1,\dotsc,\mu_q\vdash p^f$ such that $c^\lambda_{\mu_1,\dotsc,\mu_q}\neq 0$, then either $\mu_1=\cdots=\mu_q=\alpha$ or there exists $j\in[q]$ such that $\mu_j=(p^f)$. 
	Since $(p^f)\notin\Omega(t)$ by Lemma~\ref{lem: 001}, 
	it follows that
	$$\left\langle\chi^\lambda\down_B, \phi(t)^{\times q}\right\rangle=\left\langle\chi^\lambda\down_Y,(\chi^\alpha)^{\times q}\right\rangle\cdot\left\langle(\chi^\alpha)^{\times q}\down_B, \phi(t)^{\times q}\right\rangle .$$  
	Moreover, by Lemma~\ref{lem: iteratedLR} we have that
	$$\left\langle\chi^\lambda\down_Y,(\chi^\alpha)^{\times q}\right\rangle=c^\lambda_{\alpha,\ldots, \alpha} = c^\mu_{(1),\ldots, (1)} =\chi^\mu(1),$$ 
	and thus $\left\langle\chi^\lambda\down_B, \phi(t)^{\times q}\right\rangle = \chi^\mu(1)\cdot\big(\langle\chi^\alpha\down_{P_{p^f}},\phi(t)\rangle\big)^q$.
	By \cite[Corollary 9.1]{PW} we know that $\mathcal{X}(\alpha;\mu)$ is an irreducible constituent of $\chi^\lambda\down_W$. 
	Moreover,
	$$\left\langle\mathcal{X}(\alpha;\mu)\down_Y,(\chi^\alpha)^{\times q}\right\rangle=\chi^\mu(1).$$ 
	Writing $\chi^\lambda\down_W = \mathcal{X}(\alpha;\mu) + \Delta$ for some character $\Delta$ of $W$, and $\mathcal{X}(\alpha;\mu)\down_Y = \chi^\mu(1)\cdot(\chi^\alpha)^{\times q} + \theta$ for some character $\theta$ of $Y$, we have that
	\begin{align*}
	\langle\chi^\lambda\down_B, \phi(t)^{\times q}\rangle &= \langle\mathcal{X}(\alpha;\mu)\down^W_B, \phi(t)^{\times q}\rangle + \langle\Delta\down^W_B, \phi(t)^{\times q}\rangle\\
	&=\chi^\mu(1)\cdot\big(\langle\chi^\alpha\down_{P_{p^f}},\phi(t)\rangle\big)^q + \langle\theta\down^Y_B, \phi(t)^{\times q}\rangle + \langle\Delta\down^W_B, \phi(t)^{\times q}\rangle,
	\end{align*}
	and therefore 
	$$\langle\theta\down^Y_B, \phi(t)^{\times q}\rangle = \langle\Delta\down^W_B, \phi(t)^{\times q}\rangle =0.$$
	Letting $c = \langle\Delta\down^W_P, \phi(s)\rangle$, then since $\phi(s)\down^P_B = \phi(t)^{\times q}$, we have that
	$$0 = \langle\Delta\down^W_B, \phi(t)^{\times q}\rangle \ge c\langle\phi(s)\down^P_B, \phi(t)^{\times q}\rangle = c,$$
	from which we conclude $c=0$. Thus $\left\langle\chi^{\lambda}\down_P, \phi(s)\right\rangle=\left\langle\mathcal{X}(\alpha; \mu)\down_P, \phi(s)\right\rangle$.
	
	\smallskip
	
	\noindent\textbf{(ii)} Now let $\gamma=\chi^\alpha\down^{\fS_{p^f}}_{P_{p^f}}$. By Lemma~\ref{lem: 001}, $\langle\gamma,\phi(t)\rangle = 1$. 
	Moreover, we observe that
	$$\mathcal{X}(\alpha;\mu)\down_P = \mathcal{X}(\alpha;\mu)\down^{\fS_{p^f}\wr\fS_q}_{P_{p^f}\wr P_q} =  \mathcal{X}(\gamma;\chi^\mu\down^{\fS_q}_{P_q}) = \sum_{\tau\in\Irr(P_q)} \langle\chi^\mu\down_{P_q}, \tau\rangle \cdot\mathcal{X}(\gamma;\tau).$$
	Since $\phi(s)=\mathcal{X}(\phi(t);\phi(u))$, we have that
	\begin{align*}
	\langle\mathcal{X}(\alpha; \mu)\down_P, \phi(s)\rangle &= \sum_{\tau\in\Irr(P_q)} \langle\chi^\mu\down_{P_q}, \tau\rangle\cdot\langle \mathcal{X}(\gamma;\tau), \phi(s)\rangle\\
	&= \sum_{\tau\in\Irr(P_q)} \langle\chi^\mu\down_{P_q}, \tau\rangle\cdot\delta_{\phi(u),\tau} = \langle\chi^\mu\down_{P_q}, \phi(u)\rangle,
	\end{align*}
	where the second equality follows from Lemma~\ref{lem: 9.6}. By definition of $\Omega(u)$, $\langle\chi^\mu\down_{P_q}, \phi(u)\rangle>0$ if and only if $\mu\in\Omega(u)$. This concludes the proof. 
\end{proof}

Our next primary goal is to determine $m(s)$.
This is done in Theorem \ref{thm:prime power m} below. The main ingredients for proving this theorem are Lemma \ref{lem: 9.9} and Proposition \ref{prop:case1} below,
whose proofs are quite technical. To help the reader appreciate how to use these two results to prove Theorem \ref{thm:prime power m}, we have postponed their proofs to Section \ref{sec:proofs}.

\smallskip

Recall that thin partitions $\tworow{n}{m}$ and $\hook{n}{m}$ were introduced in Definition~\ref{def:thin}.

\begin{lemma}\label{lem: 9.9}
	Let $p\geq 5$ be a prime and $k\in\mathbb{N}$. Let $s=(s_1,\dotsc,s_k)\in U_k(1)$, $f=f(s)$ and $x\in[\overline{p}]$. Then 
	\begin{itemize}\setlength\itemsep{0.5em}
		\item[(a)] $\Omega(s,x) = \mathcal{B}_{p^{k+1}}(p^{k+1}-p^{k+1-f}-1)\sqcup \{(p^{k+1}-p^{k+1-f},\mu) : \mu\in\Omega(s_{f+1},\dotsc,s_k,x) \}^\circ$.
		\item[(b)] Moreover, if $x\ne 0$ and $\lambda\in\{ \tworow{p^{k+1}}{p^{k+1}-p^{k+1-f}-1}, \hook{p^{k+1}}{p^{k+1}-p^{k+1-f}-1} \}^\circ$, then $\langle\chi^\lambda\down_{P_{p^{k+1}}},\phi(s,x)\rangle\ge 2$.
	\end{itemize}
\end{lemma}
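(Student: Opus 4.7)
My plan for part (a) is to establish the two containments separately. The inclusion $\Omega(s,x)\subseteq$~right-hand side follows immediately by combining Proposition~\ref{prop: upper bound} (which gives $\Omega(s,x)\subseteq\mathcal{B}_{p^{k+1}}(p^{k+1}-p^{k+1-f})$, since $f(s,x)=f$) with Lemma~\ref{lem: 9.8} applied to $(s,x)$, which is valid because $f(s,x)=f\le k<k+1$, and which pins down the partitions in $\Omega(s,x)$ having first row of length $M(s,x)=p^{k+1}-p^{k+1-f}$. The second summand of the reverse inclusion is also supplied by Lemma~\ref{lem: 9.8}, so the essential remaining step is to prove $\mathcal{B}_{p^{k+1}}(p^{k+1}-p^{k+1-f}-1)\subseteq\Omega(s,x)$; via Lemma~\ref{lem: DinOm}, this reduces to showing $\mathcal{B}_{p^{k+1}}(p^{k+1}-p^{k+1-f}-1)\subseteq\mathcal{D}(p,p^k,\Omega(s))$.

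I would prove this reduced claim by induction on $k$. In the base case $k=1$, Lemma~\ref{lem:n=1} gives $\Omega(s)=\mathcal{B}_p(p-1)$, and Proposition~\ref{prop: BinD} with $m=p$, $t=p-1$ (valid as $p\ge 5$) immediately yields the inclusion. For $k\ge 2$ and $f=k$, Lemma~\ref{lem: 001} gives $\Omega(s)=\mathcal{B}_{p^k}(p^k-1)$, so Proposition~\ref{prop: BinD} with $t=p^k-1$ suffices (note that $p^{k+1-f}=p$ in this case). For $k\ge 2$ with $f<k$, write $s=(s',0)$ with $s'=(s_1,\dotsc,s_{k-1})\in U_{k-1}(1)$, and apply the inductive hypothesis of part (a) to $s'$ with parameter $x=0$ to obtain the explicit description $\Omega(s)=\mathcal{B}_{p^k}(p^k-p^{k-f}-1)\sqcup\{(p^k-p^{k-f},\mu):\mu\in\Omega(\triv_{P_{p^{k-f}}})\}^\circ$. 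Proposition~\ref{prop: BinD} with $t=p^k-p^{k-f}-1$ then gives $\mathcal{B}_{p^{k+1}}(p^{k+1}-p^{k+1-f}-p-1)\subseteq\mathcal{D}(p,p^k,\Omega(s))$.

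I expect the main obstacle will be closing the remaining gap in the case $f<k$: partitions $\lambda$ with $\max(\lambda_1,l(\lambda))\in\{p^{k+1}-p^{k+1-f}-p,\dotsc,p^{k+1}-p^{k+1-f}-1\}$ are not covered by the above application of Proposition~\ref{prop: BinD}. For each such $\lambda$, the plan is to construct an explicit iterated Littlewood--Richardson decomposition $c^\lambda_{\mu_1,\dotsc,\mu_p}>0$ in which $j$ of the $\mu_i$ (for some $1\le j\le p-1$) take the form $(p^k-p^{k-f},\nu)$ with $\nu\in\Omega(\triv_{P_{p^{k-f}}})$ (which are in $\Omega(s)$ by the description above), and the remaining $p-j$ lie in $\mathcal{B}_{p^k}(p^k-p^{k-f}-1)$, chosen so that the $\mu_i$ are not all equal. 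This will likely require a delicate box-shifting case analysis in the spirit of the proof of Lemma~\ref{prop: general m, q=2}, and will leverage the full description of $\Omega(\triv_{P_{p^{k-f}}})$ furnished by~\cite{GL1}.

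For part (b), since the thin partitions in question lie in $\mathcal{B}_{p^{k+1}}(p^{k+1}-p^{k+1-f}-1)$, their membership in $\Omega(s,x)$ is already guaranteed by part (a); the additional work is to upgrade the multiplicity from $1$ to $\ge 2$. My plan is to exhibit two distinct constituents of $\chi^\lambda\down_{\fS_{p^k}\wr\fS_p}$ each of which contributes an independent copy of $\phi(s,x)$ upon further restriction to $P_{p^{k+1}}$: concretely, a wreath-product constituent $\mathcal{X}(\chi^\mu;\chi^\nu)$ identified via \cite[Corollary~9.1]{PW}, whose contribution is tracked using Lemma~\ref{lem: 9.6}, together with an induced-type constituent $(\chi^{\mu_1}\times\cdots\times\chi^{\mu_p})\up^{\fS_{p^k}\wr\fS_p}$ (with the $\mu_i$ not all equal and each in $\Omega(s)$, obtained via a Littlewood--Richardson decomposition of $\lambda$) contributing via the mechanism of Lemma~\ref{lem: DinOm}. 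The hypothesis $x\ne 0$ is essential for ensuring both contributions are nonzero and distinct, in contrast with the multiplicity-$1$ computation in Lemma~\ref{lem: 001}, where only the wreath-type contribution survives.
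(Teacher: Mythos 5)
Your skeleton matches the paper's almost exactly: the upper bound plus the identification of the $\lambda_1=M(s,x)$ layer via Theorem~\ref{thm: prime power M} (equivalently Proposition~\ref{prop: upper bound}) and Lemma~\ref{lem: 9.8}; the reduction of the lower bound to $\mathcal{D}(p,p^k,\Omega(s))$ via Lemma~\ref{lem: DinOm}; induction with the $f=k$ case done by Lemma~\ref{lem: 001} together with Proposition~\ref{prop: BinD}; and, for the case $f<k$, Proposition~\ref{prop: BinD} applied to $\mathcal{B}_{p^k}(p^k-p^{k-f}-1)\subseteq\Omega(s)$, leaving the partitions with $\lambda_1\in\{p^{k+1}-p^{k+1-f}-p,\dotsc,p^{k+1}-p^{k+1-f}-1\}$ unaccounted for. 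But that leftover range is precisely where the content of the lemma lies, and your proposal only announces a plan for it ("a delicate box-shifting case analysis in the spirit of Lemma~\ref{prop: general m, q=2}") without carrying it out; as it stands this is a genuine gap. The paper closes it by a short direct construction rather than box-shifting: writing $\lambda=(p^{k+1}-p^{k+1-f}-p+r,\mu)$ with $0\le r\le p-1$, for $r\ge 1$ one picks $\nu\subseteq\mu$ with $\nu\vdash rp^{k-f}$, uses \cite[Theorem A]{GL1} to see $\triv_{P_{rp^{k-f}}}$ is a constituent of $\chi^\nu\down_{P_{rp^{k-f}}}$, and since $\triv_{P_{rp^{k-f}}}=\triv_{P_{p^{k-f}}}^{\times r}$ this yields $\nu_1,\dotsc,\nu_r\in\Omega(\triv_{P_{p^{k-f}}})$ with $c^\nu_{\nu_1,\dotsc,\nu_r}>0$; completing with $\nu_{r+1},\dotsc,\nu_p\vdash p^{k-f}+1$ and applying Lemma~\ref{lem: iteratedLR} gives $c^\lambda_{(p^k-p^{k-f},\nu_1),\dotsc,(p^k-p^{k-f},\nu_r),(p^k-p^{k-f}-1,\nu_{r+1}),\dotsc,(p^k-p^{k-f}-1,\nu_p)}>0$, with all factors in $\Omega(s)$ by the inductive description and not all equal (the case $r=0$ is similar, using $\Omega(\triv_{P_{p^{k-f}}})=\mathcal{P}(p^{k-f})\setminus\{(p^{k-f}-1,1)\}^\circ$ to choose $\nu_1\subseteq\mu$ and factors of sizes $p^{k-f}+2$ and $p^{k-f}+1$). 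One also has to check the resulting first parts really form partitions, e.g.\ $p^{k-f}+2\le p^k-p^{k-f}-2$. Until you produce such a construction for \emph{every} $\mu$, part (a), and with it the membership facts feeding part (b), is not established.

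For part (b) your two-constituent mechanism is the paper's: one induced-type constituent $(\chi^{\mu})^{\times(p-1)}\times\chi^{\nu}\up^{W}_{Y}$ coming from unequal factors in $\Omega(s)$ (the paper takes $\mu=\tworow{p^k}{p^k-p^{k-f}}$, $\nu=\tworow{p^k}{p^k-p^{k-f}-1}$ with $c^\lambda_{\mu,\dotsc,\mu,\nu}=1$, and uses \cite[Problem 5.2]{IBook} and Lemma~\ref{lem: easy observation} rather than Lemma~\ref{lem: DinOm} itself, so as to track an actual multiplicity), plus one wreath-type constituent $\mathcal{X}(\mu;\phi_x)$, which requires $x\ne 0$. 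One correction: the wreath-type constituent of $\chi^\lambda\down_{\fS_{p^k}\wr\fS_p}$ is obtained from \cite[Theorem 1.5]{DPW} for the two-row shape and \cite[Theorem 3.5]{GTT} for the hook, not from \cite[Corollary 9.1]{PW}, which in this paper produces constituents only for partitions of a different shape (as in Theorem~\ref{thm: prime power M}); so that step of your plan needs the correct input to go through.
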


For convenience, we sometimes identify a partition $\lambda$ with its corresponding character $\chi^\lambda$.

\begin{prop}\label{prop:case1}
	Let $p\geq 5$ be a prime and $k\in\mathbb{N}$. Suppose that $s\in[\overline{p}]^k$ satisfies the following:
	\begin{itemize}\setlength\itemsep{0.5em}
		\item[(i)] $m(s)>\tfrac{p^k}{2}+1$ and $\Omega(s)\setminus\mathcal{B}_{p^k}(m(s))$ contains no thin partitions. 
		\item[(ii)] $\langle \tworow{p^k}{m(s)}\down_{P_{p^k}}, \phi(s)\rangle \ge 2$ and $\langle \hook{p^k}{m(s)}\down_{P_{p^k}}, \phi(s)\rangle \ge 2$.
	\end{itemize}
	Then, for all $x\in[\overline{p}]$, 
	$$m(s,x)=p\cdot m(s),$$ 
	$\Omega(s,x)\setminus\mathcal{B}_{p^{k+1}}(pm(s))$ contains no thin partitions, and 
	$$\langle \tworow{p^{k+1}}{pm(s)}\down_{P_{p^{k+1}}}, \phi(s,x)\rangle \ge 2\quad \text{and}\quad \langle \hook{p^{k+1}}{pm(s)}\down_{P_{p^{k+1}}}, \phi(s,x)\rangle \ge 2.$$
\end{prop}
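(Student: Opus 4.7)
My plan is to promote three properties simultaneously from $\phi(s)$ to $\phi(s,x)$: the equality $m(s,x) = p\,m(s)$, the absence of thin partitions in $\Omega(s,x) \setminus \mathcal{B}_{p^{k+1}}(p m(s))$, and the multiplicity $\geq 2$ of the two boundary thin partitions $\tworow{p^{k+1}}{p m(s)}$ and $\hook{p^{k+1}}{p m(s)}$. The interior inclusion $\mathcal{B}_{p^{k+1}}(p m(s)-1) \subseteq \Omega(s,x)$ is immediate from Proposition~\ref{prop: BinD} applied with $(q,m,t)=(p,p^k,m(s))$---whose hypothesis $\tfrac{p^k}{2}+1 < m(s)$ is precisely part of~(i)---combined with $\mathcal{B}_{p^k}(m(s))\subseteq \Omega(s)$ and Lemma~\ref{lem: DinOm}.

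For the boundary, given $\lambda=(p m(s),\mu) \in \mathcal{B}_{p^{k+1}}(p m(s))$ with $\lambda_1=p m(s)$ (the case $l(\lambda)=p m(s)$ following by conjugate symmetry), I would seek an iterated Littlewood--Richardson decomposition $c^\mu_{\rho^1,\dotsc,\rho^p}>0$ with the $\rho^i \vdash p^k-m(s)$ either \emph{not all equal} or \emph{constant} at $(p^k-m(s))$ or $(1^{p^k-m(s)})$. In the first case, set $\nu^i := (m(s),\rho^i) \in \mathcal{B}_{p^k}(m(s)) \subseteq \Omega(s)$ (well-defined because the hypothesis $m(s)>\tfrac{p^k}{2}+1$ forces $p^k-m(s) < m(s)-1$); Lemma~\ref{lem: iteratedLR} then yields $c^\lambda_{\nu^1,\dotsc,\nu^p}>0$ and Lemma~\ref{lem: DinOm} places $\lambda$ in $\Omega(s,x)$. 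In the second case, Lemma~\ref{lem: iteratedLR} gives $c^\lambda_{\alpha,\dotsc,\alpha}>0$ with $\alpha \in \{\tworow{p^k}{m(s)},\ \hook{p^k}{m(s)}\}$, whence Gallagher's theorem yields $\mathcal{X}(\alpha;\tau) \mid \chi^\lambda\!\down_{\fS_{p^k}\wr\fS_p}$ for some $\tau \in \Irr(\fS_p)$, and combining hypothesis~(ii) with Lemma~\ref{lem: 9.5} produces $\langle \chi^\lambda\!\down_{P_{p^{k+1}}},\ \phi(s,x)\rangle \geq 2\tau(1) \geq 2$. Specialising $\mu = (p(p^k-m(s)))$ and $\mu = (1^{p(p^k-m(s))})$---both of which admit only the single constant decomposition at $(p^k-m(s))$ or $(1^{p^k-m(s)})$ respectively---places us in the second case for $\lambda = \tworow{p^{k+1}}{p m(s)}$ and $\lambda = \hook{p^{k+1}}{p m(s)}$, and thereby delivers claim~(C).

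For the upper bound and the absence of thin partitions, suppose for contradiction $\lambda \in \Omega(s,x)$ is thin with $\lambda_1 > p m(s)$ (the case $l(\lambda)>p m(s)$ being dual). Then $\phi(s)^{\times p}$ occurs in $\chi^\lambda\!\down_{(P_{p^k})^{\times p}}$, so $c^\lambda_{\mu^1,\dotsc,\mu^p}>0$ for some $\mu^1,\dotsc,\mu^p \in \Omega(s)$. The classical fact that an iterated Littlewood--Richardson factorisation of a hook (respectively a two-row partition) consists of hooks (respectively partitions of length $\leq 2$) forces each $\mu^i$ to be thin of the matching type; hypothesis~(i) then pins $\mu^i \in \mathcal{B}_{p^k}(m(s))$, so $(\mu^i)_1 \leq m(s)$. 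Iterating Lemma~\ref{lem: LRfirstpart} yields $\lambda_1 \leq \sum_i (\mu^i)_1 \leq p m(s)$, the desired contradiction. This proves that $\Omega(s,x) \setminus \mathcal{B}_{p^{k+1}}(p m(s))$ has no thin partitions, and $m(s,x) \leq p m(s)$ follows by exhibiting the explicit thin partition $\hook{p^{k+1}}{p m(s)+1}$---which lies in $\mathcal{B}_{p^{k+1}}(p m(s)+1)$ by (i)---outside $\Omega(s,x)$.

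The hard part will be establishing the dichotomy used in the boundary case: showing that for every admissible $\mu$, at least one of the two Littlewood--Richardson routes (non-constant decomposition into parts of size $p^k-m(s)$, or constant decomposition into the row or column shape) succeeds. This requires a case-analysis on the shape of $\mu$, exploiting the smallness $p^k - m(s) < m(s) - 1$ forced by~(i) together with the constraints $\mu_1 \leq p m(s)$ and $l(\mu) \leq p m(s)-1$; rectangular and near-rectangular $\mu$ are the trickiest. A secondary subtlety is making precise the classical ``hook/two-row factors are hook/two-row'' statements invoked for~(B) across all three families of thin partitions (hooks, two-row, and two-column).
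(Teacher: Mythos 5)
Your architecture is essentially the paper's: the interior inclusion via Proposition~\ref{prop: BinD} and Lemma~\ref{lem: DinOm}, the generic boundary partitions $(pm(s),\mu)$ via a not-all-equal decomposition of $\mu$ lifted by Lemma~\ref{lem: iteratedLR}, the two extreme thin boundary partitions via hypothesis (ii) and Lemma~\ref{lem: 9.5}, and the exclusion of thin partitions above $pm(s)$ via containment plus Lemma~\ref{lem: LRfirstpart} (your ``classical fact'' about hook/two-row factorisations is not needed: $\mu^i\subseteq\lambda$ already forces each $\mu^i$ to be thin, which is all the paper uses). The one step you defer as ``the hard part'' --- the dichotomy that every $\mu\vdash p(p^k-m(s))$ admits either a not-all-equal decomposition into partitions of $p^k-m(s)$ or a constant one at the row or column --- requires no case analysis on the shape of $\mu$: apply Proposition~\ref{prop: BinD} again, now with parameters $(q,m,t)=\bigl(p,\,p^k-m(s),\,p^k-m(s)\bigr)$, so that $\mathcal{B}_{p(p^k-m(s))}\bigl(p(p^k-m(s))-1\bigr)\subseteq\mathcal{D}\bigl(p,p^k-m(s),\mathcal{P}(p^k-m(s))\bigr)$. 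Thus every $\mu$ other than $(p(p^k-m(s)))$ and its conjugate has a not-all-equal decomposition, and those two exceptional shapes trivially admit the constant row, respectively column, decomposition. This is exactly how the paper closes the boundary case, so your proposal is completable with a lemma you already cite; rectangular and near-rectangular $\mu$ pose no special difficulty.

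Where you genuinely diverge is in handling $\tworow{p^{k+1}}{pm(s)}$ and $\hook{p^{k+1}}{pm(s)}$ (and, in your version, any $\lambda$ admitting a constant decomposition). You observe that $c^\lambda_{\alpha,\dotsc,\alpha}>0$ with $\alpha\in\{\tworow{p^k}{m(s)},\hook{p^k}{m(s)}\}$ forces, by Clifford theory and Gallagher's theorem as recorded in Section~\ref{sec:wreath}, some constituent $\mathcal{X}(\alpha;\tau)$ of $\chi^\lambda\down_{\fS_{p^k}\wr\fS_p}$ with $\tau\in\Irr(\fS_p)$; picking any linear constituent $\phi_j$ of $\tau\down_{P_p}$ and applying Lemma~\ref{lem: 9.5} (which gives multiplicity at least $2$ for every pair of linear characters of $P_p$, so you need not control $\tau$ or match $j$ with $x$) yields the required $\langle\chi^\lambda\down_{P_{p^{k+1}}},\phi(s,x)\rangle\ge 2$. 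The paper instead invokes \cite[Corollary 9.1]{PW} and \cite[Theorem 3.5]{GTT} to exhibit an explicit constituent $\mathcal{X}(\alpha;\nu)$ with $\nu\in\{(p),(1^p)\}$ before applying Lemma~\ref{lem: 9.5}. Your route is correct and slightly leaner within this proposition, at the cost of the (mild) Clifford-theoretic argument; it buys independence from the plethysm results of \cite{PW} and \cite{GTT} at this particular step, though the paper still needs them elsewhere.
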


Roughly speaking, Proposition~\ref{prop:case1} tells us that whenever the multiplicity of $\phi(s)$ as a constituent of the restriction of the irreducible characters labelled by the \textit{longest} thin partitions in $\Omega(s)$ is greater than or equal to $2$, then $m(s,x)=p\cdot m(s)$ for all $x\in [\overline{p}]$. 
The assumptions in Proposition~\ref{prop:case1} might seem artificial, but they in fact mimic exactly the structure of the sets $\Omega(s)$ (see Lemma~\ref{lem: 9.9}, for instance).

\smallskip

We can now determine the value of $m(s)$ for all $s\in[\overline{p}]^k$. In fact, we prove much more about the structure of $\Omega(s)$. Our main strategy is to compute $m(s)$ inductively. That is, knowing $m(s)$ and other particular features of $\Omega(s)$, we exploit Proposition \ref{prop:case1} to compute $m(s,x)$ for any $x\in[\overline{p}]$. 
This technique is illustrated concretely in Example \ref{example: first} below. 

Recall that $\tau(s)$ was introduced in Definition~\ref{def: m(s) types}.

\smallskip

\begin{thm}\label{thm:prime power m}
	Let $p\ge 5$ be a prime. Let $k\in\mathbb{N}$ and let $\phi(s)\in\Lin(P_{p^k})\setminus\{\triv_{P_{p^k}}\}$. Then
	\[ m(s) = \begin{cases}
	p^k-p^{k-f(s)} - 1 + \delta_{f(s),k} & \mathrm{if}\ \tau(s)\neq 4,\\
	p^k-p^{k-f(s)}-p^{k-g(s)} & \mathrm{if}\ \tau(s)=4. \end{cases} \]
Moreover, $\Omega(s)\setminus B_{p^k}(m(s))$ contains no thin partitions. 
\end{thm}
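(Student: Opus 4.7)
The plan is to argue by induction on $k$, carrying along as part of the inductive hypothesis not only the formula for $m(s)$ but also, when $\tau(s)=4$, the two technical ingredients required by Proposition~\ref{prop:case1}: first, that $\Omega(s)\setminus \mathcal{B}_{p^k}(m(s))$ contains no thin partitions; second, that $\langle \tworow{p^k}{m(s)}\down_{P_{p^k}},\phi(s)\rangle \ge 2$ and $\langle \hook{p^k}{m(s)}\down_{P_{p^k}},\phi(s)\rangle \ge 2$. These auxiliary assertions are precisely what will enable the inductive step when moving from $\tau(s)=4$ to $\tau(s,x)=4$.

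The base case $k=1$ will be handled immediately by Lemma~\ref{lem:n=1}: any non-trivial $s \in [\overline{p}]$ has $\tau(s)=3$ and $f(s)=1=k$, and $\Omega(s) = \mathcal{B}_p(p-1)$, so $m(s) = p-1$, matching the formula $p - p^0 - 1 + 1$. For the inductive step I would write $s' = (s,x) \in [\overline{p}]^{k+1}$ non-trivial, and split by the type of $s$. If $s$ is the trivial sequence and $x\ne 0$, then $s'$ has type $3$ with $f(s')=k+1$, and Lemma~\ref{lem: 001} immediately yields $\Omega(s') = \mathcal{B}_{p^{k+1}}(p^{k+1}-1)$, hence $m(s') = p^{k+1}-1$ as required.

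Next I would treat the case where $s$ is non-trivial with $\tau(s)\in\{2,3\}$, i.e.\ $s\in U_k(1)$. Lemma~\ref{lem: 9.9}(a) gives an explicit description of $\Omega(s')$; checking when $\mathcal{B}_{p^{k+1}}(t)\subseteq \Omega(s')$ by comparing against this description, and noting that $\Omega(s_{f(s)+1},\dotsc,s_k,x)$ always omits $(p^{k+1-f(s)}-1,1)$ in every relevant subcase, will pin down $m(s')=p^{k+1}-p^{k+1-f(s)}-1$. This matches the formula whether $\tau(s')=2$ (when $x=0$) or $\tau(s')=4$ (when $x\ne 0$), since $f(s')=f(s)$ in both, and $g(s')=k+1$ in the latter. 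Lemma~\ref{lem: 9.9}(b) then supplies the multiplicity $\ge 2$ condition exactly in the subcase $\tau(s')=4$, closing the induction on that auxiliary property, and Lemma~\ref{lem: 9.8} delivers the ``no thin partitions'' statement at the extremal row length.

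Finally, for $\tau(s)=4$, the inductive hypothesis provides $m(s)=p^k-p^{k-f(s)}-p^{k-g(s)}$, the absence of thin partitions above $\mathcal{B}_{p^k}(m(s))$, and the threshold multiplicity bounds; the estimate $m(s)>p^k/2+1$ follows from $1\le f<g\le k$ and $p\ge 5$. Thus all hypotheses of Proposition~\ref{prop:case1} are in place, and its conclusion yields $m(s')=p\cdot m(s) = p^{k+1}-p^{k+1-f(s')}-p^{k+1-g(s')}$, matching the formula, while also propagating the two auxiliary properties to $s'$. The main technical obstacle is precisely this threading of the enhanced inductive hypothesis: one must verify at each transition $\tau(s)\to\tau(s,x)$ that the hypotheses demanded by the tool used next (Lemma~\ref{lem: 9.9}(b) when $\tau(s)\in\{2,3\}$, Proposition~\ref{prop:case1} when $\tau(s)=4$) are available, and that the same enhanced conclusions come out the other side to feed the subsequent step.
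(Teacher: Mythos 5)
Your proposal is correct and follows essentially the same route as the paper: the base cases are Lemmas~\ref{lem:n=1} and~\ref{lem: 001}, the passage through sequences in $U_k(1)$ is handled by Lemma~\ref{lem: 9.9} (with Lemma~\ref{lem: 9.8} supplying the thin-partition statement), and once the type becomes $4$ the step $m(s,x)=p\cdot m(s)$ is the repeated application of Proposition~\ref{prop:case1} under the enhanced inductive hypothesis, which is exactly the induction the paper runs (see Remark~\ref{rem:inductive step}). One harmless slip: when $x\ne 0$ the set $\Omega(s_{f(s)+1},\dotsc,s_k,x)$ omits $(p^{k+1-f(s)})$ rather than $(p^{k+1-f(s)}-1,1)$ (Lemma~\ref{lem: 001}), but either omission shows $\mathcal{B}_{p^{k+1}}\big(p^{k+1}-p^{k+1-f(s)}\big)\not\subseteq\Omega(s,x)$, which is all that is needed to pin down $m(s,x)$.
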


\begin{proof}
The assertion follows from Lemma~\ref{lem: 001} if $\tau(s)=3$, and from Lemma~\ref{lem: 9.9} if $\tau(s)=2$.

Now suppose that $\tau(s)=4$. By Lemmas~\ref{lem: 001} and~\ref{lem: 9.9}, $t:=(s_1,\dotsc,s_{g(s)})$ satisfies conditions (i) and (ii) of Proposition~\ref{prop:case1}. In particular we have that $m(t) = p^{g(s)}-p^{g(s)-f(s)}-1$. Applying Proposition \ref{prop:case1} to the sequence $t$, we deduce that $m(t,s_{g(s)+1})=p\cdot m(t)$ and that $(t,s_{g(s)+1})$ also satisfies both conditions of Proposition~\ref{prop:case1}. The statement now follows by repeated application of Proposition~\ref{prop:case1} ($k-g(s)$ iterations), giving $m(s) = p^{k-g(s)}\cdot m(t) = p^k-p^{k-f(s)}-p^{k-g(s)}$.
\end{proof}

\begin{rem}\label{rem:inductive step}
Proposition~\ref{prop:case1} in fact governs the entire inductive step from $m(s)$ to $m(s,x)$, with Lemmas~\ref{lem: 001} and~\ref{lem: 9.9} providing the base cases (we may ignore $\triv_{P_{p^k}}$, corresponding to $s=(0,\dotsc,0)$, which was considered in \cite{GL1}). This can be seen easily using Figure~\ref{fig:flowchart p>=5}. 

\smallskip

We remark that we could not directly apply Propositions~\ref{prop:case1} to sequences $s=(0,\dotsc,0,s_{f(s)})\in[\overline{p}]^k$, since condition (ii) on multiplicities is not satisfied (see Lemma \ref{lem: 001}). Furthermore, we also could not apply the proposition to sequences of the form $s=(0,\dotsc,0,s_{f(s)},0,\dotsc,0)$, since $\Omega(s)\setminus\mathcal{B}_{p^k}(m(s))$ \emph{does} contain thin partitions, namely $\tworow{p^k}{m(s)+1}$, $\hook{p^k}{m(s)+1}$ and their conjugates. This is why Lemma \ref{lem: 9.9} is necessary. 
\hfill$\lozenge$

\begin{figure}[ht]
	\centering
	\begin{tikzpicture}[scale=1.2, every node/.style={scale=0.9},thick]
	\draw (0,0) node[] (2) {Lem~\ref{lem: 9.9}};
	\draw (4,0) node[] (1) {Prop~\ref{prop:case1}};

	\draw [-latex] (2.north east) arc(-70:250:0.5) node[midway, above] {$x=0$};
	\draw [-latex] (1.north east) arc(-70:250:0.5) node[midway, above] {all $x$};

	\draw [-latex] (2.east) -- (1.west) node[midway, above] {$x\ne 0$};

	\draw (-0.7,0) node[] {$\in$};
	\draw (-1.7,0) node[] {$(0,0,\dotsc, 0,y)\ 
$};	
	\end{tikzpicture}
	\caption{\small Diagram describing the inductive step of determining $m(s,x)$ from $m(s)$. 
	The notation $s\in A$ means that $s$ satisfies the conditions of $A$, while an arrow $A\xrightarrow{x=i} B$ means that if the sequence $s$ satisfies the conditions of $A$, then the sequence $(s,i)$ satisfies the conditions of $B$. Here $y$ is any integer in $[\overline{p}]$.}
	\label{fig:flowchart p>=5}
\end{figure}
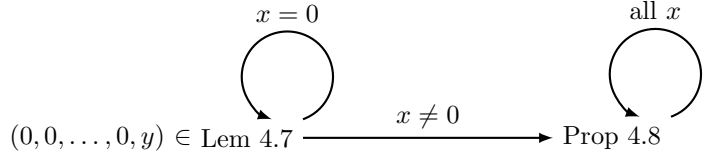
\end{rem}

We include the following example to explain how to use Lemma~\ref{lem: 9.9} and Proposition~\ref{prop:case1} (and the diagram represented in Figure 2) to compute $m(s)$ in a concrete situation. 

\begin{eg}\label{example: first}
Let $p=7$ and $s=(0100110)$. Then $(01)\in U_2(1)$ satisfies the conditions of Lemma~\ref{lem: 9.9} with $k=f=2$, so applying the lemma gives 
$$\Omega(010) = \mathcal{B}_{7^3}(7^3-7^1-1)\sqcup\{(7^3-7^1,\mu) : \mu\in\mathcal{P}(7)\setminus\{(6,1)\}^\circ \}^\circ,$$
since $\Omega(0)=\mathcal{P}(7)\setminus\{(6,1)\}^\circ$ by \cite[Theorem A]{GL1}.
Then $(010)\in U_3(1)$ satisfies the conditions of Lemma~\ref{lem: 9.9} with $k=3$ and $f=2$, giving
$$\Omega(0100) = \mathcal{B}_{7^4}(7^4-7^2-1)\sqcup\{(7^4-7^2,\mu) : \mu\in\mathcal{P}(49)\setminus\{(48,1)\}^\circ \}^\circ,$$
and applying Lemma~\ref{lem: 9.9} again to $(0100)\in U_4(1)$ with $k=4$ and $f=2$ gives
$$\Omega(01001) = \mathcal{B}_{7^5}(7^5-7^3-1)\sqcup \{(7^5-7^3,\mu) : \mu\in\mathcal{B}_{7^3}(7^3-1) \}^\circ,$$
where $\Omega(001)=\mathcal{B}_{7^3}(7^3-1)$ by Lemma~\ref{lem: 001}.
Furthermore, $$\langle \tworow{7^5}{7^5-7^3-1}\down_{P_{7^5}}, \phi(01001)\rangle \ge 2\quad\text{and}\quad \langle \hook{7^5}{7^5-7^3-1}\down_{P_{7^5}}, \phi(01001)\rangle \ge 2.$$
Thus $(01001)$ satisfies conditions (i) and (ii) of Proposition~\ref{prop:case1}. Applying the proposition, we obtain that $m(010011)=7\cdot (7^5-7^3-1)=7^6-7^4-7$ and that $(010011)$ satisfies both conditions of Proposition~\ref{prop:case1}. This exemplifies the right-hand loop in Figure~\ref{fig:flowchart p>=5}: if the sequence $t$ satisfies the conditions of Proposition~\ref{prop:case1}, then so does $(t,x)$ for all $x\in[\overline{p}]$. Applying the proposition again, we obtain $m(s)=7\cdot m(010011)=7^7-7^5-7^2$, as predicted by Theorem~\ref{thm:prime power m}. Moreover, using Theorem~\ref{thm: prime power M} we obtain that $\mathcal{B}_{7^7}(7^7-7^5-7^2)\subseteq\Omega(s)\subseteq\mathcal{B}_{7^7}(7^7-7^5)$.
\hfill$\lozenge$
\end{eg}

We can collect the results obtained so far in the following statement, which together with Theorem~\ref{thm: prime power M} gives Theorem B in the case where $n$ is a power of $p$.

\begin{thm}\label{thm: m and Omega}
Let $p\ge 5$ be a prime.
Let $s\in [\overline{p}]^k$ and $\phi(s)\in\mathrm{Lin}(P_{p^k})$. Then
$$m(s) = \begin{cases}
p^k-2 & \mathrm{if}\ \tau(s) = 1,\\ 
p^k-p^{k-f(s)}-1 & \mathrm{if}\ \tau(s) = 2,\\ 
p^k-1 & \mathrm{if}\ \tau(s) = 3,\\ 
p^k-p^{k-f(s)}-p^{k-g(s)} & \mathrm{if}\ \tau(s) = 4. 
\end{cases}$$

If $\tau(s)\in\{1,2,3\}$ then we have a complete description of $\Omega(s)$, namely
$$\Omega(s) = \begin{cases}
\mathcal{B}_{p^k}(m(s)) \sqcup \{(p^k)\}^\circ & \mathrm{if}\ \tau(s) = 1,\\
\mathcal{B}_{p^k}(m(s)) \sqcup \{(m(s)+1,\mu) : \mu\in\Omega(\triv_{P_{p^{k-f(s)}}}) \}^\circ & \mathrm{if}\ \tau(s) = 2,\\
\mathcal{B}_{p^k}(m(s)) & \mathrm{if}\ \tau(s) = 3.\\
\end{cases}$$
If $\tau(s)=4$, then $\Omega(s)\setminus\mathcal{B}_{p^k}(m(s))$ contains no thin partitions.
\end{thm}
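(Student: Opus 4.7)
The plan is to assemble this theorem as a case-by-case corollary of the work already built up in this section, partitioning on the type $\tau(s)\in\{1,2,3,4\}$. For $\tau(s)=1$ we have $s=(0,\dotsc,0)$ and $\phi(s)=\triv_{P_{p^k}}$, so both the formula $m(s)=p^k-2$ and the description $\Omega(s)=\mathcal{B}_{p^k}(p^k-2)\sqcup\{(p^k)\}^\circ$ are precisely \cite[Theorem~A]{GL1} specialised to $n=p^k$. For $\tau(s)=3$ we have $f(s)=k$, so $s=(0,\dotsc,0,x)$ with $x\neq 0$; Lemma~\ref{lem: 001} (applicable because $p\ge 5$) gives $\Omega(s)=\mathcal{B}_{p^k}(p^k-1)$ directly, from which $m(s)=p^k-1$ is immediate and the ``no thin partitions'' condition is vacuous because $\mathcal{B}_{p^k}(m(s))=\Omega(s)$. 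For $\tau(s)=4$, Theorem~\ref{thm:prime power m} already supplies both the claimed formula $m(s)=p^k-p^{k-f(s)}-p^{k-g(s)}$ and the assertion that $\Omega(s)\setminus\mathcal{B}_{p^k}(m(s))$ contains no thin partitions, so nothing further is needed.

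The only case requiring a small additional argument is $\tau(s)=2$. Here $f:=f(s)<k$ and $s\in U_k(1)$, so we factor $s=(t,0)$ with $t=(s_1,\dotsc,s_{k-1})\in U_{k-1}(1)$ and $f(t)=f$. Applying Lemma~\ref{lem: 9.9}(a) to $t$ with $x=0$ yields
\[
\Omega(s)=\mathcal{B}_{p^k}\bigl(p^k-p^{k-f}-1\bigr)\,\sqcup\,\bigl\{\bigl(p^k-p^{k-f},\mu\bigr) : \mu\in\Omega\bigl(t_{f+1},\dotsc,t_{k-1},0\bigr)\bigr\}^\circ.
\]
Since $(t_{f+1},\dotsc,t_{k-1},0)$ is the zero sequence in $[\overline{p}]^{k-f}$, its associated linear character is $\triv_{P_{p^{k-f}}}$, so the above is precisely the description claimed. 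To extract $m(s)=p^k-p^{k-f}-1$, the first summand gives $\mathcal{B}_{p^k}(p^k-p^{k-f}-1)\subseteq\Omega(s)$ immediately, while the partition $(p^k-p^{k-f},p^{k-f}-1,1)\in\mathcal{B}_{p^k}(p^k-p^{k-f})$ does not lie in $\Omega(s)$: it is not in the first summand, and it is not in the second because $(p^{k-f}-1,1)\notin\Omega(\triv_{P_{p^{k-f}}})$ by \cite[Theorem~A]{GL1}.

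There is no serious obstacle in this final theorem; it is essentially a clean repackaging of Theorem~\ref{thm: prime power M}, Theorem~\ref{thm:prime power m}, Lemma~\ref{lem: 001}, Lemma~\ref{lem: 9.9} and \cite[Theorem~A]{GL1}. The only point requiring vigilance is to confirm, in the $\tau(s)=2$ case, that the witness partition exhibited for the sharpness of $m(s)$ genuinely fails to lie in $\Omega(s)$; this reduces, as indicated above, to a transparent application of the known description of $\Omega(\triv_{P_{p^{k-f(s)}}})$ to the smaller Sylow subgroup.
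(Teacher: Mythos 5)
Your proposal is correct and matches the paper's own argument, which likewise assembles the theorem from \cite[Theorem A]{GL1} for $\tau(s)=1$ and from Lemmas~\ref{lem: 001} and~\ref{lem: 9.9} together with Theorem~\ref{thm:prime power m} for $\tau(s)\ne 1$. Your extra witness partition $(p^k-p^{k-f},p^{k-f}-1,1)$ in the $\tau(s)=2$ case just makes explicit the sharpness of $m(s)$ that the paper obtains from Theorem~\ref{thm:prime power m} (itself resting on Lemma~\ref{lem: 9.9}), so the content is the same.
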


\begin{proof}
If $\tau(s)=1$ then $\phi(s)=\triv_{P_{p^k}}$ and the statement follows from \cite[Theorem A]{GL1}. If $\tau(s)\neq 1$ then the statement follows from the combination of Lemmas~\ref{lem: 001} and~\ref{lem: 9.9} and Theorem~\ref{thm:prime power m}.
\end{proof}

Theorem~\ref{thm: m and Omega} allows us to deduce Theorem A when $n$ is a prime power.

\begin{cor}\label{cor: thm A for prime power}
Let $p\ge 5$ be a prime.
Let $s\in [\overline{p}]^k$ and $\phi(s)\in\mathrm{Lin}(P_{p^k})\setminus\{\triv_{P_{p^k}}\}$ be quasi-trivial. Then $\Omega(\phi)=\mathcal{B}_{p^k}(p^k-1),$
unless $T(\phi)=(0,1,0,0)$, in which case $m(\phi)=p^k-p^{k-f(s)}-1$ and
$$\Omega(\phi) = \mathcal{B}_n(m(\phi)) \sqcup\{(m(\phi)+1,\mu)\ |\ \mu\in\Omega(\triv_{P_{n-(m(\phi)+1)}}) \}^\circ.$$
\end{cor}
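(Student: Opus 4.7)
The plan is to deduce the corollary directly from Theorem~\ref{thm: m and Omega} by a short case analysis. Since $n=p^k$ is a prime power, the $p$-adic expansion has $R=1$, so $\phi=\phi(s)$ corresponds to a single sequence $s\in[\overline{p}]^k$. Consequently $T(\phi)=(x_1,x_2,x_3,x_4)$ with exactly one $x_i$ equal to $1$, and $x_i=1$ precisely when $\tau(s)=i$. The hypothesis that $\phi$ is quasi-trivial and nontrivial rules out $\tau(s)=1$ (which would give $\phi=\triv_{P_{p^k}}$) and $\tau(s)=4$, leaving only $\tau(s)\in\{2,3\}$.

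First I would handle $\tau(s)=3$, so $T(\phi)=(0,0,1,0)$. Here $s\in U_k(1)$ with $f(s)=k$, and Theorem~\ref{thm: m and Omega} gives $m(s)=p^k-1$ and $\Omega(s)=\mathcal{B}_{p^k}(m(s))=\mathcal{B}_{p^k}(p^k-1)$, which is the main assertion of the corollary.

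Next I would treat the exceptional case $\tau(s)=2$, that is $T(\phi)=(0,1,0,0)$. Here $s\in U_k(1)$ with $f(s)<k$, and Theorem~\ref{thm: m and Omega} yields
\[
m(s)=p^k-p^{k-f(s)}-1 \quad\text{and}\quad \Omega(s)=\mathcal{B}_{p^k}(m(s))\sqcup\{(m(s)+1,\mu)\ :\ \mu\in\Omega(\triv_{P_{p^{k-f(s)}}})\}^\circ.
\]
The only verification needed is that the indexing set of the second piece agrees with the formulation in the corollary. Writing $n=p^k$, we have $m(\phi)+1=p^k-p^{k-f(s)}$, so
\[
n-(m(\phi)+1)=p^k-\bigl(p^k-p^{k-f(s)}\bigr)=p^{k-f(s)},
\]
and hence $\Omega(\triv_{P_{n-(m(\phi)+1)}})=\Omega(\triv_{P_{p^{k-f(s)}}})$, matching the stated formula exactly.

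There is no genuine obstacle here: the corollary is simply a translation of Theorem~\ref{thm: m and Omega} into the language of the type $T(\phi)$ used in the statement of Theorem A. The only minor bookkeeping is the identification $n-(m(\phi)+1)=p^{k-f(s)}$ above, which ensures that the two descriptions of the extra constituents in the exceptional case coincide.
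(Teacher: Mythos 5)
Your proposal is correct and matches the paper's own argument: the paper likewise observes that quasi-triviality and nontriviality force $\tau(s)\in\{2,3\}$ and then cites Theorem~\ref{thm: m and Omega}, with your identification $n-(m(\phi)+1)=p^{k-f(s)}$ being the same routine bookkeeping left implicit there.
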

\begin{proof}
We observe that $\tau(s)\in\{2,3\}$. 
The statement now follows from Theorem~\ref{thm: m and Omega}.
\end{proof}

As mentioned in the introduction, when $p=3$ the situation is much more complicated. The following example shows that Theorem A would not hold for $p=3$. 

\begin{eg}\label{counterexample}
Let $p=3$, $k\geq 4$ and let $\phi\in\mathrm{Lin}(P_{3\6k})$ be the quasi-trivial character corresponding to the sequence $s=(1,0,0,\ldots, 0)$. Calling $a=3\6{k-1}$ one can show that 
$$\Omega(\phi)=\mathcal{B}_{3^k}(2a)\setminus \{ (2a,a-1,1), (2a,2,1^{a-2}), \tworow{3^k}{2a-1}, \hook{3^k}{2a-1} \}^\circ.$$ 
In particular we have $\tau(s)=2$, $m(s)=2a-2$ and the thin partition $(2a,a)\in \Omega(s)\smallsetminus\mathcal{B}_{3\6{k}}(m(s))$.
This is just one of the many quasi-trivial linear characters of a Sylow $3$-subgroup whose corresponding set $\Omega(\phi)$ can not be described by the statement of Theorem A.
\hfill$\lozenge$
\end{eg}

\medskip

\subsection{Proofs of Lemma \ref{lem: 9.9} and Proposition \ref{prop:case1}}\label{sec:proofs}

\begin{proof}[\bf{Proof of Lemma~\ref{lem: 9.9}}]
	\noindent\textbf{(a)} Let $f=f(s)$, $t=(s_1,\dotsc,s_f)$ and $u=(s_{f+1},\dotsc,s_k)$. We proceed by induction on $k-f$. 
		For the base case $f=k$, we have that $\Omega(s)=\mathcal{B}_{p^k}(p^k-1)$ by Lemma~\ref{lem: 001}. 
	Since $p^k>4$, we have that $\mathcal{B}_{p^{k+1}}(p^{k+1}-p-1)\subseteq\mathcal{D}\big(p,p^k,\mathcal{B}_{p^k}(p^k-1)\big)$ by Proposition~\ref{prop: BinD}. Hence $$\mathcal{B}_{p^{k+1}}(p^{k+1}-p-1)\subseteq\Omega(s,x)$$
	by Lemma~\ref{lem: DinOm}.
	On the other hand, by Theorem~\ref{thm: prime power M} we know that $M(s,x)=p^{k+1}-p$. Hence $p^{k+1}-p-1\le m(s,x) \le p^{k+1}-p$, and the statement now follows directly from Lemma~\ref{lem: 9.8}.
	
	\smallskip
	
	Now suppose that $f<k$. 
	Then $f(s^-)=f\in[k-1]$, and by the inductive hypothesis applied to $s=(s^-,s_k)$ we have that
	\begin{equation}\label{eqn:omega eqn}
	\Omega(s)=\mathcal{B}_{p^k}(p^k-p^{k-f}-1)\sqcup\{(p^{k}-p^{k-f}, \mu)\ |\ \mu\in\Omega(u)\}^\circ.
	\end{equation}
	By Proposition~\ref{prop: BinD} and Lemma~\ref{lem: DinOm}, we deduce that
	$$\mathcal{B}_{p^{k+1}}(p^{k+1}-p^{k+1-f}-p-1)\subseteq\mathcal{D}\big(p,p^k,\mathcal{B}_{p^k}(p^k-p^{k-f}-1)\big)\subseteq\mathcal{D}\big(p,p^k,\Omega(s)\big)\subseteq\Omega(s,x).$$
	
	\smallskip
	
	We now want to show that for all $r\in\{0,1,\dotsc,p-1\}$ and all $\mu\vdash p^{k+1-f}+p-r$, the partition $\lambda:=(p^{k+1}-p^{k+1-f}-p+r,\mu)$ belongs to $\Omega(s,x)$. This would allow us to conclude that $\mathcal{B}_{p^{k+1}}(p^{k+1}-p^{k+1-f}-1)\subseteq\Omega(s,x)$, since $\Omega(s,x)$ is closed under conjugation.
		
	\smallskip
	
	If $r=0$ then $\mu\vdash p^{k+1-f}+p$. Since $\Omega(u)=\mathcal{P}(p^{k-f})\setminus\{(p^{k-f}-1,1)\}^\circ$ by \cite[Theorem A]{GL1}, 
	there exists a partition $\nu_1\in\Omega(u)$ such that $\nu_1\subseteq\mu$. 
	Hence there exist partitions $\nu_2\vdash p^{k-f}+2$ and $\nu_3,\dotsc,\nu_p\vdash p^{k-f}+1$ such that $c^\mu_{\nu_1,\dotsc,\nu_p}>0$. 
	By Lemma~\ref{lem: iteratedLR} we deduce that
	$$ c^\lambda_{(p^k-p^{k-f},\nu_1), (p^k-p^{k-f}-2, \nu_2), (p^k-p^{k-f}-1,\nu_3),\dotsc,(p^k-p^{k-f}-1,\nu_p)} = c^\mu_{\nu_1,\dotsc,\nu_p} > 0.$$
	Since $p^{k-f}+2\le p^k-p^{k-f}-2$, we have that $(p^k-p^{k-f},\nu_1)$, $(p^k-p^{k-f}-2, \nu_2)$ and $(p^k-p^{k-f}-1,\nu_i)$ for all $i\in\{3,\dotsc,p\}$ are genuine partitions. Moreover, they belong to $\Omega(s)$, as shown by equation (\ref{eqn:omega eqn}), 
	so by 
	Lemma~\ref{lem: DinOm} we conclude that $\lambda\in\mathcal{D}\big(p,p^k,\Omega(s)\big)\subseteq\Omega(s,x)$, for all $x\in[\overline{p}]$.
	
	\smallskip
	
	If $r\in[p-1]$ then $\mu\vdash p^{k+1-f}+p-r$ and there exists a partition $\nu\vdash rp^{k-f}$ such that $\nu\subseteq\mu$. (If $r=1$ then we choose $\nu\in\Omega(u)$, given by \cite[Theorem A]{GL1}.) 
	By \cite[Theorem A]{GL1}, we know that
	$$\triv_{P_{rp^{k-f}}}\ \big|\ \chi^\nu\down_{P_{rp^{k-f}}}.$$
	Thus there exist $\nu_1,\dotsc,\nu_r\in\Omega(u)$ such that $c^\nu_{\nu_1,\dotsc,\nu_r}>0$, since $\phi(u)=\triv_{P_{p^{k-f}}}$. 
	Moreover, there exist partitions $\nu_{r+1},\dotsc,\nu_p\vdash p^{k-f}+1$ such that $c^\mu_{\nu_1,\dotsc,\nu_r,\nu_{r+1},\dotsc,\nu_p}>0$. 
	Using Lemma~\ref{lem: iteratedLR} we deduce that
	$$c^\lambda_{(p^k-p^{k-f},\nu_1),\dotsc,(p^k-p^{k-f},\nu_r), (p^k-p^{k-f}-1,\nu_{r+1}),\dotsc,(p^k-p^{k-f}-1,\nu_p)} = c^\mu_{\nu_1,\dotsc,\nu_p} > 0.$$
	Note that $(p^k-p^{k-f},\nu_i)$ for $i\in[r]$ and $(p^k-p^{k-f}-1,\nu_j)$ for $j\in\{r+1,\dotsc,p\}$ are indeed partitions as $p^{k-f}+1\le p^k-p^{k-f}-1$. Moreover, they belong to $\Omega(s)$ by (\ref{eqn:omega eqn}), so $\lambda\in\mathcal{D}(p,p^k,\Omega(s))\subseteq\Omega(s,x)$ for all $x\in[\overline{p}]$, by Lemma~\ref{lem: DinOm}.
	
	\smallskip
	
	Thus we have shown that $\mathcal{B}_{p^{k+1}}(p^{k+1}-p^{k+1-f}-1)\subseteq \Omega(s,x)$. 
	The statement (a) now follows from Lemma~\ref{lem: 9.8}, since $M(s,x)=p^{k+1}-p^{k+1-f}$ by Theorem~\ref{thm: prime power M}.
	
	\medskip
	
	\noindent\textbf{(b)} We turn to the proof of statement (b). Let $t=(s,x)$ and observe that $f(t)=f(s)=f$. Let $P=P_{p^{k+1}}=P_{p^k}\wr P_p$ and let $B$ be its base group, namely $P=B\rtimes P_p$ and $B\cong(P_{p^k})^{\times p}$. Let $Y=(\fS_{p^k})^{\times p}$ be the Young subgroup of $\fS_{p^{k+1}}$ naturally containing $B$. We define two further subgroups of $\fS_{p^{k+1}}$ as follows: $H:=Y\rtimes \fS_p\cong \fS_{p^k}\wr \fS_p$ and $W:=Y\rtimes P_p\cong \fS_{p^k}\wr P_p$. Clearly $P\le W\le H$. 
	
	\smallskip
		
	First, we let $\lambda=\tworow{p^{k+1}}{p^{k+1}-p^{k+1-f}-1}$ and define 
	$$\mu=\tworow{p^k}{p^k-p^{k-f}}\quad\text{and}\quad\nu=\tworow{p^k}{p^k-p^{k-f}-1}.$$
	Note that $\mu,\nu\in\Omega(s)$ by part (a) of the present lemma if $f<k$, and by Lemma~\ref{lem: 001} if $f=k$. Moreover, it is easy to see that $c^\lambda_{\mu^1,\dotsc,\mu^{p-1},\nu}=1$ where $\mu^1=\cdots=\mu^{p-1}=\mu$. 
	Since $\theta:=(\chi^\mu)^{\times (p-1)}\times\chi^\nu$ is an irreducible constituent of $\chi^\lambda\down_Y$, there exists $\rho\in\Irr(W|\theta)$ such that $\rho\mid\chi^\lambda\down_W$. But $\mu\ne\nu$, so by the description of $\Irr(\fS_{p^k}\wr P_p)$ (see Section~\ref{sec:wreath}), we have that $\rho=\theta\up^W_Y$. From \cite[Problem 5.2]{IBook}, we see that $\rho\down_P=\theta\down_B\up^P$, which has $\phi(s)^{\times p}\up^P$ as a direct summand, and hence $\langle \rho\down_P,\phi(t)\rangle\ge 1$ by Lemma~\ref{lem: easy observation} since $\phi(t)=\mathcal{X}(\phi(s);\phi_x)$. 
	On the other hand, $\mathcal{X}\big(\mu; (p-1,1)\big)\mid\chi^\lambda\down_H$ by \cite[Theorem 1.5]{DPW}. 
	Thus $\beta:=\mathcal{X}(\mu;\phi_x)$ is an irreducible constituent of $\chi^\lambda\down_W$, since $\chi^{(p-1,1)}\down^{\fS_p}_{P_p} = \sum_{i=1}^{p-1}\phi_i$, and clearly $\langle\beta\down_P,\phi(t)\rangle\ge 1$. Since $\rho\ne\beta$ are both irreducible, we find that
	$$\langle\chi^\lambda\down_P,\phi(t)\rangle \ge \langle\rho\down_P,\phi(t)\rangle + \langle\beta\down_P,\phi(t)\rangle \ge 2.$$
	
	For $\lambda=\hook{p^{k+1}}{p^{k+1}-p^{k+1-f}-1}$, a similar argument using $\mu=\hook{p^k}{p^k-p^{k-f}}$ and $\nu=\hook{p^k}{p^k-p^{k-f}-1}$, and \cite[Theorem 3.5]{GTT} to show that $\mathcal{X}(\mu;\tau)\mid\chi^\lambda\down_H$ for some $\tau\in\{(p-1,1)\}^\circ$ shows that $\langle\chi^\lambda\down_P,\phi(t)\rangle \ge 2$. Statement (b) then follows since $\chi^\lambda\down_P = \chi^{\lambda'}\down_P$.
\end{proof}

\medskip

\begin{proof}[\bf{Proof of Proposition~\ref{prop:case1}}]
Let $m=m(s)$ and let $x\in [\overline{p}]$. By Proposition~\ref{prop: BinD} and Lemma~\ref{lem: DinOm}, 
	$$\mathcal{B}_{p^{k+1}}(pm-1)\subseteq\mathcal{D}\big(p,p^k,\mathcal{B}_{p^k}(m)\big) \subseteq\mathcal{D}\big(p,p^k,\Omega(s)\big)\subseteq\Omega(s,x).$$
	
	If $\lambda=(pm,\mu)$ for some $\mu\in\mathcal{P}(p^{k+1}-pm)\setminus\{(p^{k+1}-pm)\}^\circ$, then by Proposition~\ref{prop: BinD} there exist $\nu_1,\dotsc,\nu_p\in\mathcal{P}(p^k-m)$, not all equal, such that $c^\mu_{\nu_1,\dotsc,\nu_p}\ne 0,$. 	
	Hence $c^\lambda_{(m,\nu_1),\dotsc,(m,\nu_p)} \neq 0$,
	by Lemma~\ref{lem: iteratedLR}. Since $m>\tfrac{p^k}{2}+1$, we have that $(m,\nu_i)$ is a well-defined partition for each $i$, and 
	$(m,\nu_i)\in\mathcal{B}_{p^k}(m)\subseteq\Omega(s)$. Thus $\lambda\in\mathcal{D}\big(p,p^k,\Omega(s)\big)\subseteq\Omega(s,x)$.

It remains to study the two cases where $\mu\in	\{(p^{k+1}-pm)\}^\circ$.
Suppose first that $\mu=(p^{k+1}-pm)$ and thus $\lambda=(pm,p^{k+1}-pm)=\tworow{p^{k+1}}{pm}$. Then
	$$\mathcal{X}\big(\tworow{p^k}{m};(p)\big)\ \big|\ \chi^\lambda\down_{\fS_{p^k}\wr\fS_p}^{\fS_{p^{k+1}}}$$
	by \cite[Corollary 9.1]{PW}. By hypothesis, $\langle \tworow{p^k}{m}\down_{P_{p^k}}, \phi(s)\rangle \ge 2$, and hence 
	$$\langle\mathcal{X}(\tworow{p^k}{m}\down_{P_{p^k}};\phi_0), \phi(s,x)\rangle \ge 2$$
	for all $x\in[\overline{p}]$, by Lemma~\ref{lem: 9.5}. But 
	$$\mathcal{X}(\tworow{p^k}{m}\down_{P_{p^k}};\phi_0) = \mathcal{X}\big(\tworow{p^k}{m}; (p)\big)\down_{P_{p^k}\wr P_p}^{\fS_{p^k}\wr\fS_p},$$
	hence $\langle \chi^\lambda\down_{P_{p^{k+1}}}, \phi(s,x)\rangle \ge 2$ as claimed.
Finally, if $\lambda=(pm,1^{p^{k+1}-pm})=\hook{p^{k+1}}{pm}$, then
	$$\mathcal{X}\big(\hook{p^k}{m};\nu\big)\ \big|\ \chi^\lambda\down_{\fS_{p^k}\wr\fS_p}^{\fS_{p^{k+1}}}$$
	for some $\nu\in\{(p),(1^p)\}$ by \cite[Theorem 3.5]{GTT}. By Lemma~\ref{lem: 9.5}, 
	$$\langle\mathcal{X}(\hook{p^k}{m}\down_{P_{p^k}};\phi_0), \phi(s,x)\rangle \ge 2$$
	for all $x\in[\overline{p}]$. But $\mathcal{X}(\hook{p^k}{m}\down_{P_{p^k}};\phi_0) = \mathcal{X}(\hook{p^k}{m}; \nu)\down_{P_{p^k}\wr P_p}^{\fS_{p^k}\wr\fS_p}$, so $\langle \chi^\lambda\down_{P_{p^{k+1}}}, \phi(s,x)\rangle \ge 2$ as claimed.
Since $\Omega(s,x)$ is closed under conjugation we conclude that $\mathcal{B}_{p^{k+1}}(pm)\subseteq\Omega(s,x)$. It remains to show only that $\Omega(s,x)\setminus\mathcal{B}_{p^{k+1}}(pm)$ contains no thin partitions. Let $\lambda=\tworow{p^{k+1}}{pm+a}$ for some $a\in\mathbb{N}$. 
Since $\lambda_1>pm$, Lemma~\ref{lem: LRfirstpart} implies that
for any $\mu_1,\dotsc,\mu_p\vdash p^k$ such that $c^\lambda_{\mu_1,\dotsc,\mu_p}>0$, there exists $j\in[p]$ such that $(\mu_j)_1>m$. Moreover, $\mu_j$ is thin since $\mu_j\subseteq\lambda$, so $\mu_j\notin\Omega(s)$ by hypothesis and hence $\langle \chi^\lambda\down_{(P_{p^k})^{\times p}}, \phi(s)^{\times p}\rangle = 0$. Thus $\phi(s,x)\nmid\chi^\lambda\down_{P_{p^{k+1}}}$, i.e.~$\lambda\notin\Omega(s,x)$. A similar argument holds for any hook partition $\lambda=\hook{p^{k+1}}{pm+a}$.	
\end{proof}

\smallskip

\section{Proofs of Theorems A, B and C for all natural numbers}\label{sec: new omegas arbitrary n}

Following on from the previous section, the aim of the present one is to determine the numbers $m(\phi)$ and $M(\phi)$
for all $\phi\in\Lin(P_n)$ where $n$ is now an arbitrary natural number. This allows us to complete the proofs of Theorems A, B and C. 

\smallskip

Let $n\in\mathbb{N}$ and let $n = \sum_{i=1}^t a_ip^{n_i}$ be its $p$-adic expansion, where $0\le n_1<\cdots<n_t$.
Recall that we may write $\phi=\phi(\underline{\mathbf{s}}) = \phi(\mathbf{s}(1,1))\times\cdots\times\phi(\mathbf{s}(t,a_t))$ as in Section~\ref{sec: sylow prelims} equation (\ref{eqn: index}), and recall the operator $\star$ from Definition~\ref{def:star operator}.

\begin{lemma}\label{lem: omega star omega}
	Let $p$ be any prime. For all $n\in\mathbb{N}$ and $\phi(\underline{\mathbf{s}})\in\Lin(P_n)$, $$\Omega(\underline{\mathbf{s}})=\Omega(\mathbf{s}(1,1))\star\cdots\star\Omega(\mathbf{s}(i,j))\star\cdots\star\Omega(\mathbf{s}(t,a_t)).$$
\end{lemma}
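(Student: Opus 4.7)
The strategy is to reduce $\chi^{\lambda}\!\down_{P_n}$ to the direct product structure of $P_n$ via an intermediate Young subgroup, then read off the condition for $\phi(\underline{\mathbf{s}})$ to appear as a constituent in terms of Littlewood--Richardson coefficients.

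Let me write $\phi=\phi(s_1)\times\cdots\times\phi(s_R)$ where $R=\sum_i a_i$ and $s_1,\dotsc,s_R$ is the multiset of component sequences associated with $\underline{\mathbf{s}}$, with $s_l\in[\overline{p}]^{n_{k(l)}}$ for an appropriate index $k(l)\in[t]$. Then $P_n = P_{p^{n_{k(1)}}}\times\cdots\times P_{p^{n_{k(R)}}}$, which sits inside the Young subgroup $Y:=\fS_{p^{n_{k(1)}}}\times\cdots\times\fS_{p^{n_{k(R)}}}$ of $\fS_n$. First I would expand
$$\chi^{\lambda}\!\down^{\fS_n}_Y \;=\;\sum_{\mu_1\vdash p^{n_{k(1)}},\,\ldots,\,\mu_R\vdash p^{n_{k(R)}}} c^{\lambda}_{\mu_1,\dotsc,\mu_R}\;\chi^{\mu_1}\times\cdots\times\chi^{\mu_R}$$
using the iterated Littlewood--Richardson rule (Section~\ref{sec: LR prelims}).

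Next, I restrict each tensor factor to the corresponding Sylow subgroup $P_{p^{n_{k(l)}}}$. Because $P_n$ is the internal direct product of these Sylow factors and $\phi=\phi(s_1)\times\cdots\times\phi(s_R)$ is a pure tensor, the multiplicity of $\phi$ in $\chi^{\lambda}\!\down_{P_n}$ factors across components:
$$\bigl\langle \chi^\lambda\!\down_{P_n},\,\phi\bigr\rangle \;=\; \sum_{\mu_1,\dotsc,\mu_R} c^{\lambda}_{\mu_1,\dotsc,\mu_R}\prod_{l=1}^{R}\bigl\langle\chi^{\mu_l}\!\down_{P_{p^{n_{k(l)}}}},\,\phi(s_l)\bigr\rangle.$$
Since all summands are nonnegative integers, this is positive if and only if there exist partitions $\mu_l\in\Omega(s_l)$ for each $l\in[R]$ such that $c^{\lambda}_{\mu_1,\dotsc,\mu_R}>0$.

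Finally, I translate this condition back into the $\star$--language. By the associativity of $\star$ noted after Definition~\ref{def:star operator}, together with the recursive formula $c^{\lambda}_{\mu^1,\dotsc,\mu^R}=\sum_{\gamma}c^{\gamma}_{\mu^1,\dotsc,\mu^{R-1}}c^{\lambda}_{\gamma,\mu^R}$ for iterated Littlewood--Richardson coefficients, a straightforward induction on $R$ shows that $\Omega(s_1)\star\cdots\star\Omega(s_R)$ equals the set of $\lambda\vdash n$ for which there exist $\mu_l\in\Omega(s_l)$ with $c^{\lambda}_{\mu_1,\dotsc,\mu_R}>0$. Combining this with the previous paragraph yields the equality of the lemma. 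The only genuinely delicate point is this last bookkeeping step, but it is a standard consequence of associativity and the recursive Littlewood--Richardson formula, and I do not foresee any real obstacle.
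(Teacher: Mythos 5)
Your proposal is correct and follows essentially the same route as the paper: the paper's (very brief) proof likewise passes through the Young subgroup $(\fS_{p^{n_1}})^{\times a_1}\times\cdots\times(\fS_{p^{n_t}})^{\times a_t}$ containing $P_n$ and invokes elementary properties of induction/restriction in stages, which is exactly the Littlewood--Richardson bookkeeping you spell out. Your version is just a more explicit (Frobenius-reciprocal) writing-out of the same argument.
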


\begin{proof}
Since $P_n=(P_{p^{n_1}})^{\times a_1}\times\cdots\times (P_{p^{n_t}})^{\times a_t}\leq (\fS_{p^{n_1}})^{\times a_1}\times\cdots\times (\fS_{p^{n_t}})^{\times a_t}\leq \fS_n$ and since 
$\phi(\underline{\mathbf{s}}) = \phi(\mathbf{s}(1,1))\times\cdots\times\phi(\mathbf{s}(t,a_t))$, the statement follows from elementary properties of induction of characters. 
\end{proof}

\begin{thm}\label{thm: M all n}
	Let $p$ be an odd prime. Let $n\in\mathbb{N}$ and $\phi(\underline{\mathbf{s}})\in\Lin(P_n)$ be as above. Then
	$$M(\underline{\mathbf{s}}) = \sum_{(i,j)} M(\mathbf{s}(i,j)).$$
\end{thm}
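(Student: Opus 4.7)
The plan is to prove both inequalities separately, with Lemma~\ref{lem: omega star omega} reducing the problem to an assertion about the $\star$-product of the sets $\Omega(\mathbf{s}(i,j))$.

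For the upper bound $M(\underline{\mathbf{s}}) \le \sum_{(i,j)} M(\mathbf{s}(i,j))$: by the definition of $M(\mathbf{s}(i,j))$, each set $\Omega(\mathbf{s}(i,j))$ is contained in $\mathcal{B}_{p^{n_i}}(M(\mathbf{s}(i,j)))$. By Lemma~\ref{lem: LRfirstpart}, if $\gamma \in A\star B$ with $A\subseteq\mathcal{B}_{r}(a)$ and $B\subseteq\mathcal{B}_{s}(b)$, then $\gamma_1\le a+b$ and $l(\gamma)\le a+b$, i.e.\ $\gamma\in\mathcal{B}_{r+s}(a+b)$. Iterating over all pairs $(i,j)$ and invoking Lemma~\ref{lem: omega star omega} gives $\Omega(\underline{\mathbf{s}})\subseteq\mathcal{B}_n\bigl(\sum M(\mathbf{s}(i,j))\bigr)$, and thus $M(\underline{\mathbf{s}})\le \sum M(\mathbf{s}(i,j))$.

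For the lower bound, I would exhibit a single partition $\lambda\in\Omega(\underline{\mathbf{s}})$ with $\lambda_1=\sum M(\mathbf{s}(i,j))$. By definition of $M(\mathbf{s}(i,j))$ and the fact that each $\Omega(\mathbf{s}(i,j))$ is closed under conjugation, for each pair $(i,j)$ there exists a partition $\mu(i,j)\in\Omega(\mathbf{s}(i,j))$ with $\mu(i,j)_1 = M(\mathbf{s}(i,j))$. Now let $\lambda$ be the componentwise sum of the $\mu(i,j)$, that is, $\lambda_r := \sum_{(i,j)} \mu(i,j)_r$ for each $r\ge 1$; this is a partition of $n$ since each summand is weakly decreasing. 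A standard check (by induction on the number of summands, using the trivial Littlewood--Richardson filling which assigns the label $k$ to every box in the $k$-th row of the skew shape $[\mu^1+\cdots+\mu^k]\setminus[\mu^1+\cdots+\mu^{k-1}]$) shows that the iterated Littlewood--Richardson coefficient $c^\lambda_{\mu(1,1),\ldots,\mu(t,a_t)}$ is at least~$1$. Hence $\lambda\in\Omega(\mathbf{s}(1,1))\star\cdots\star\Omega(\mathbf{s}(t,a_t)) = \Omega(\underline{\mathbf{s}})$ by Lemma~\ref{lem: omega star omega}, and since $\lambda_1 = \sum M(\mathbf{s}(i,j))$ we conclude $M(\underline{\mathbf{s}}) \ge \sum M(\mathbf{s}(i,j))$.

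The two steps are both short; the only point requiring mild care is the construction of the witness $\lambda$ in the lower bound, and in particular verifying that placing an all-$k$ filling in row $k$ of the appropriate skew shape yields a valid Littlewood--Richardson filling. The rows-weakly-increasing and columns-strictly-increasing conditions are automatic, and the reading-word condition follows because the sequence of labels is nondecreasing. No further structural input about the $\Omega(\mathbf{s}(i,j))$ is needed beyond the definition of $M$ and closure under conjugation, so the argument does not rely on the explicit formulas of Theorem~\ref{thm: prime power M} nor on $p\ge 5$; it holds for any odd prime.
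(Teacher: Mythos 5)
Your proof is correct and follows essentially the same route as the paper's: the upper bound via Lemma~\ref{lem: LRfirstpart} applied to the $\star$-decomposition from Lemma~\ref{lem: omega star omega}, and the lower bound via the componentwise sum $\lambda$ of partitions $\mu(i,j)\in\Omega(\mathbf{s}(i,j))$ with maximal first part (available by closure under conjugation), whose iterated Littlewood--Richardson coefficient is positive. The only differences are cosmetic: you spell out the filling behind $c^{\lambda}_{\mu(1,1),\dotsc,\mu(t,a_t)}\ge 1$, which the paper dismisses as ``not difficult to see'', and you bypass the paper's appeal to Proposition~\ref{prop: B star B} and the explicit values of $M(s)$, which is indeed inessential for this statement.
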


\begin{proof}
Let $M:=\sum_{(i,j)} M(\mathbf{s}(i,j))$.
For $k\in\mathbb{N}_0$ and $s\in[\overline{p}]^k$, by Theorem~\ref{thm: prime power M} we have that $M(s)=p^k-p^{k-f(s)}$ whenever $s\neq (0,\ldots,0)$. On the other hand we have that $M(0,\dotsc,0)=p^k$ by \cite[Theorem A]{GL1}. Hence $M(\mathbf{s}(i,j))>p^{n_i}/2$ for all $(i,j)$, 
so by Lemma~\ref{lem: omega star omega} and Proposition~\ref{prop: B star B} we have that
$$\Omega(\underline{\mathbf{s}}) = \Omega(\mathbf{s}(1,1))\star\cdots\star\Omega(\mathbf{s}(t,a_t))\subseteq\mathcal{B}_{p^{n_1}}\big(M(\mathbf{s}(1,1))\big)\star\cdots\star\mathcal{B}_{p^{n_t}}\big(M(\mathbf{s}(t,a_t))\big) = \mathcal{B}_n(M).$$
Thus $M(\underline{\mathbf{s}})\le M$. On the other hand, let $\lambda^{(i,j)}\in\Omega(\mathbf{s}(i,j))$ be such that $\lambda^{(i,j)}_1 = M(\mathbf{s}(i,j))$ for each $(i,j)$ (this is possible since $\Omega(\mathbf{s}(i,j))$ is closed under conjugation). Setting $\lambda=\lambda^{(1,1)}+\cdots+\lambda^{(t,a_t)}$. It is not difficult to see that $c^\lambda_{\lambda^{(1,1)},\dotsc,\lambda^{(t,a_t)}}=1$. Hence $\lambda\in\Omega(\mathbf{s}(1,1))\star\cdots\star\Omega(\mathbf{s}(t,a_t))=\Omega(\underline{\mathbf{s}})$, and $\lambda_1=\sum_{(i,j)} \lambda^{(i,j)}_1 = M$, so $M(\underline{\mathbf{s}})\ge M$.
\end{proof}

We now aim to determine $m(\phi)$ for all $\phi\in\Lin(P_n)\setminus\{\triv_{P_n}\}$. Fix a prime $p\ge 5$. When $\phi=\triv_{P_n}$ we know by \cite[Theorem A]{GL1} that $\Omega(\triv_{P_n})=\mathcal{P}(n)$ whenever $n\in\mathbb{N}$ is not a power of $p$, while if $n=p^k$ then $\Omega(\triv_{P_{p^k}})=\mathcal{P}(p^k)\setminus\{(p^k-1,1)\}^\circ$.

As explained in Section~\ref{sec: sylow prelims}, to simplify notation we let $R=\sum_{i=1}^t a_i$ and we define the multiset $\{s_1,\dotsc,s_R\}$ as $\{s_1,\dotsc,s_R\} = \{\mathbf{s}(i,j) \mid i\in[t],\ j\in[a_i]\}$. We let $k_j$ be the length of $s_j$, so 
$\{k_1,\dotsc,k_R\}=\{n_1,\dotsc,n_t\}$ and $|\{j\in[R] \mid k_j = n_i\}|=a_i.$ When $\phi=\phi(\underline{\mathbf{s}})$ and $\underline{\mathbf{s}}$ is identified with $\{s_1,\dotsc,s_R\}$ as above, we also denote $m(\phi)$ or $m(\underline{\mathbf{s}})$ by $m(s_1,\dotsc,s_R)$. Note that the order of $s_1,\dotsc,s_R$ does not matter in determining $m(\phi)$, 
because inductions to $\mathfrak{S}_n$ of $N_{\fS_n}(P_n)$--conjugate characters of $P_n$ coincide (see Remarks~\ref{rem: N} and~\ref{rem: N2}).

\smallskip

Fix some $\phi\in\Lin(P_n)$ with corresponding multiset of sequences $\{s_1,\dotsc,s_R\}$ as described above. 
Since $P_n$ is trivial whenever $n<p$, from now on we may assume that $n\ge p$. Moreover, we may assume that $R\ge 2$ since the case of $R=1$ is treated in Section~\ref{sec: new omegas prime power}. 
In addition, we assume for the rest of this section that there exists some $i\in[R]$ such that $\tau(s_i)\ne 1$, since $\phi\ne\triv_{P_n}$. We wish to express $m(\phi)$ in terms of the quantities $m(s_1), m(s_2),\dotsc, m(s_R)$ that we determined in Section~\ref{sec: new omegas prime power}. In order to do this, we recall the following definitions (previously given in Section~\ref{sec: sylow prelims}).

\begin{defn}
	Let $n\in\mathbb{N}$ and $\phi\in\Lin(P_n)$. Suppose $\phi$ corresponds to the multiset of sequences $\{s_1,\ldots, s_R\}$. The \textit{type} of $\phi$ is the $4$-tuple $T(\phi)=(x_1,x_2,x_3,x_4)$, where for each $i\in [4]$ we set 
	$$x_i:=|\{j\in [R]\ |\ \tau(s_j)=i\}|.$$
\end{defn}

\begin{defn}
	Let $k\in\mathbb{N}_0$ and $s\in[\overline{p}]^k$. The integer $N(s)$ is defined as follows:
	$$N(s) = \begin{cases}
	p^k & \mathrm{if}\ \tau(s)=1,\\
	m(s)+1 & \mathrm{if}\ \tau(s)=2,\\
	m(s) & \mathrm{if}\ \tau(s)\in\{3,4\}.
	\end{cases}$$
	(Note that if $k=0$, then $s$ is the empty sequence and $N(s)=p^k=1$.)	
	For $\phi\in\Lin(P_n)$ as described above, let $N(\phi)$ be defined as follows:
	$$N(\phi) = \sum_{j=1}^R N(s_j).$$
\end{defn}

We are now ready to completely describe $m(\phi)$, completing the proofs of Theorems A and B (stated in Section~\ref{sec: sylow prelims}).
This is done in the following two theorems, whose proofs appear at the end of this section to improve readability.

\begin{thm}\label{thm:10.6}
	Let $p\ge 5$ be a prime and $n\in\mathbb{N}$. Let $\phi\in\Lin(P_n)$ correspond to $\{s_1,\ldots, s_R\}$ with $R\ge 2$. Suppose that $\tau(s_i)\ne 4$ for all $i\in[R]$. Then $m(\phi)=N(\phi)$ and 
	$$\Omega(\phi) = \mathcal{B}_n(m(\phi)),$$
	unless $T(\phi)=(R-1,1,0,0)$, in which case $m(\phi)=N(\phi)-1$ and
	$$\Omega(\phi) = \mathcal{B}_n(m(\phi)) \sqcup\{(m(\phi)+1,\mu)\ |\ \mu\in\Omega(\triv_{P_{p^{k_i-f(s_i)}}}) \}^\circ,$$
where $i$ is the unique element of $[R]$ such that $\tau(s_i)=2$.
\end{thm}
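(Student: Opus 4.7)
The plan is to determine $\Omega(\phi) = \Omega(s_1) \star \cdots \star \Omega(s_R)$ (obtained from Lemma~\ref{lem: omega star omega}) in three layers: an upper bound $\Omega(\phi) \subseteq \mathcal{B}_n(N(\phi))$ from the value of $M(\phi)$; a characterisation of the \emph{top-layer} partitions $\lambda \in \Omega(\phi)$ with $\lambda_1 = N(\phi)$; and a bulk lower bound $\mathcal{B}_n(m(\phi)) \subseteq \Omega(\phi)$. The upper bound is immediate: direct inspection of Theorem~\ref{thm: m and Omega} gives $M(s_j) = N(s_j)$ for every $\tau(s_j) \in \{1,2,3\}$, and Theorem~\ref{thm: M all n} then yields $M(\phi) = N(\phi)$.

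For the top layer, suppose $\lambda \in \Omega(\phi)$ has $\lambda_1 = N(\phi)$ and $c^\lambda_{\mu_1,\dots,\mu_R} > 0$ for some $\mu_j \in \Omega(s_j)$. Lemma~\ref{lem: LRfirstpart} combined with $\sum_j M(s_j) = N(\phi)$ forces $(\mu_j)_1 = N(s_j)$ for every $j$, and Lemma~\ref{lem: 9.8} (with Lemmas~\ref{lem:n=1} and~\ref{lem: 001} covering types~1 and~3) writes $\mu_j = (N(s_j), \tilde\mu_j)$ with $\tilde\mu_j \in \Omega(s_j^{\mathrm{post}})$, where $s_j^{\mathrm{post}}$ denotes the suffix of $s_j$ after position $f(s_j)$ (empty for types~1 and~3, all-zero of length $k_j - f(s_j)$ for type~2). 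Lemma~\ref{lem: iteratedLR} with $b_j = N(s_j)$ then reduces the condition $c^\lambda_{\mu_1,\dots,\mu_R} > 0$ to $c^{(\lambda_2,\lambda_3,\dots)}_{\tilde\mu_1,\dots,\tilde\mu_R} > 0$, so $\lambda \in \Omega(\phi)$ if and only if $(\lambda_2,\lambda_3,\dots) \in \Omega(\phi^{\mathrm{post}})$, where $\phi^{\mathrm{post}}$ is the trivial character of the product group $P_{p^{l_{i_1}}} \times \cdots \times P_{p^{l_{i_{x_2}}}} \times P_1^{\times x_3}$, with $l_{i_j} = k_{i_j} - f(s_{i_j})$ ranging over the type-2 indices. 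Iterated application of Lemma~\ref{lem: 10.3} combined with \cite[Theorem A]{GL1} shows that $\Omega(\phi^{\mathrm{post}})$ equals the full set $\mathcal{P}(n-N(\phi))$ unless $T(\phi) = (R-1,1,0,0)$, in which case $\phi^{\mathrm{post}}$ collapses to a single $\triv_{P_{p^{k_i - f(s_i)}}}$ (with $k_i - f(s_i) \ge 1$) and $\Omega(\phi^{\mathrm{post}}) = \Omega(\triv_{P_{p^{k_i - f(s_i)}}})$. This identifies the top layer in both situations.

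For the bulk lower bound, the plan is to iterate Proposition~\ref{prop: B star B} and Lemma~\ref{lem: 10.3}: first combine $\mathcal{B}_{p^{k_j}}(m(s_j)) \subseteq \Omega(s_j)$ for $\tau(s_j) \in \{2,3\}$ via Proposition~\ref{prop: B star B} (the halving hypothesis $m(s_j) > p^{k_j}/2$ holds for $p \ge 5$), and then absorb each type-1 factor via Lemma~\ref{lem: 10.3} using $\mathcal{B}_{p^{k_j}}(p^{k_j}-2) \cup \{(p^{k_j})\}^\circ \subseteq \Omega(s_j)$ (the size hypothesis $p^{k_j} \ge 5$ also holds). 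This produces $\mathcal{B}_n(N(\phi) - x_2) \subseteq \Omega(\phi)$, which equals $\mathcal{B}_n(m(\phi))$ in the exceptional case (where $x_2 = 1$ and $m(\phi) = N(\phi)-1$) and in the non-exceptional case with $x_2 = 0$; combining with the top-layer description and the upper bound then yields the claimed equality in these cases. The main obstacle is the non-exceptional case with $x_2 \ge 1$, where one must enlarge the bulk from $\mathcal{B}_n(N(\phi) - x_2)$ up to $\mathcal{B}_n(N(\phi))$. The strategy is to pair type-2 extras $(N(s_i), \mu_i) \in \Omega(s_i)$ with bulk pieces from another non-trivial factor $\Omega(s_{i'})$ (available since $x_2 \ge 2$ or $x_3 \ge 1$ in this regime), then apply Lemma~\ref{lem: iteratedLR} together with the $\Omega(\phi^{\mathrm{post}}) = \mathcal{P}$ computation of the second paragraph to show that such pairings realise every partition with intermediate first-row length in $(N(\phi) - x_2, N(\phi)]$. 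Together with the bulk this yields $\Omega(\phi) = \mathcal{B}_n(N(\phi)) = \mathcal{B}_n(m(\phi))$.
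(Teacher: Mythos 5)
Your argument is correct in substance but organised quite differently from the paper. The paper first disposes of the exceptional type $T(\phi)=(R-1,1,0,0)$ separately (Lemma~\ref{lem: 10.7}), and then proves the generic case by induction on $R$: the base case $R=2$ is split into the type pairs $(1,3),(2,3),(3,3),(2,2)$, each settled with Proposition~\ref{prop: B star B}, Lemma~\ref{lem: 10.3} and Lemma~\ref{lem: iteratedLR}, and the inductive step hinges on the bookkeeping observation that when $T(\phi)\ne(R-1,1,0,0)$ and $R\ge 3$ one can peel off a factor so that the remaining character $\psi$ still satisfies $T(\psi)\ne(R-2,1,0,0)$, whence $\Omega(\psi)=\mathcal{B}_{n-p^k}(N(\psi))$ by induction. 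You instead run a single global, non-inductive argument in layers: the upper bound $\Omega(\phi)\subseteq\mathcal{B}_n(N(\phi))$ from $M(\phi)=N(\phi)$ (Theorem~\ref{thm: M all n}); an exact determination of the top layer $\lambda_1=N(\phi)$ by forcing $(\mu_j)_1=N(s_j)$ in every factor, invoking Lemma~\ref{lem: 9.8}/Theorem~\ref{thm: m and Omega} for the shape of those constituents and Lemma~\ref{lem: iteratedLR} to strip the first row, so that the top layer is identified with a star product of sets $\Omega(\triv_{P_{p^{l}}})$; a bulk bound $\mathcal{B}_n(N(\phi)-x_2)\subseteq\Omega(\phi)$; and a mixed top/bulk choice across the type-$2$ factors to fill the intermediate first-row lengths when $x_2\ge 2$. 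This buys a uniform treatment in which the exceptional case is not special at all \textendash{} it is simply the situation where the ``post'' star product fails to be all of $\mathcal{P}(n-N(\phi))$ \textendash{} at the cost of a somewhat heavier one-shot combinatorial analysis, whereas the paper's induction keeps each individual step very light but needs the exceptional-type bookkeeping and a longer case analysis.

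Two spots in your sketch need (routine) care. First, type-$1$ factors with $k_j=0$ contribute $\mathcal{P}(1)$, for which Lemma~\ref{lem: 10.3} does not apply (its size hypothesis $n\ge 5$ fails and $\mathcal{B}_1(-1)$ is meaningless); absorb these via Proposition~\ref{prop: B star B} instead. Second, when $x_2\ge 2$ the claim that the post star product $\Omega(\triv_{P_{p^{l_1}}})\star\Omega(\triv_{P_{p^{l_2}}})\star\cdots$ is all of $\mathcal{P}(n-N(\phi))$ is not a literal iteration of Lemma~\ref{lem: 10.3}, since neither initial factor is a full box set; it does follow, e.g., by noting that $P_{p^{l_1}}\times P_{p^{l_2}}$ is itself a Sylow $p$-subgroup of $\fS_{p^{l_1}+p^{l_2}}$ (as $2\le p-1$) so that \cite[Theorem A]{GL1} gives the full $\mathcal{P}(p^{l_1}+p^{l_2})$ directly, or by handling the two top rows by hand as the paper does in its $(2,2)$ base case; after that, Lemma~\ref{lem: 10.3} and Proposition~\ref{prop: B star B} absorb the remaining factors. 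Finally, like the paper, your argument implicitly excludes $\phi=\triv_{P_n}$ (all $\tau(s_i)=1$), which is covered separately by \cite[Theorem A]{GL1}.
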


\begin{thm}\label{thm:10.5}
	Let $p\ge 5$ be a prime and $n\in\mathbb{N}$. Let $\phi\in\Lin(P_n)$ correspond to $\{s_1,\ldots, s_R\}$. Suppose that $\tau(s_i)=4$ for some $i\in[R]$. Then
	\begin{itemize}\setlength\itemsep{0.5em}
		\item[(i)] $m(\phi)=N(\phi)$, and
		\item[(ii)] $\Omega(\phi)\setminus\mathcal{B}_n(m(\phi))$ contains no thin partitions.
	\end{itemize}
\end{thm}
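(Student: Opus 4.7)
My plan is to establish Theorem~\ref{thm:10.5} by two complementary arguments: an upper bound $m(\phi)\le N(\phi)$ together with (ii), both obtained from controlling thin partitions in $\Omega(\phi)$, and a lower bound $m(\phi)\ge N(\phi)$ by assembling $\mathcal{B}_n(N(\phi))$ as an iterated star product.

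For the upper bound and (ii), I first argue that no thin partition $\lambda\in\Omega(\phi)$ has $\lambda_1>N(\phi)$. By Lemma~\ref{lem: omega star omega}, any $\lambda\in\Omega(\phi)$ admits a decomposition $c^\lambda_{\mu_1,\ldots,\mu_R}>0$ with $\mu_j\in\Omega(s_j)$. If $\lambda$ is thin (two rows, two columns, or a hook), then a direct analysis of Littlewood--Richardson fillings of skew shapes of thin partitions forces every $\mu_j$ to be thin as well. One then inspects, type-by-type using Theorems~\ref{thm:prime power m} and~\ref{thm: m and Omega}, that thin members of $\Omega(s_j)$ have first row at most $N(s_j)$: for type~4 this is exactly the "no thin partitions outside $\mathcal{B}_{p^{k_j}}(m(s_j))$" clause of Theorem~\ref{thm:prime power m}, and for the remaining types it follows immediately from the explicit form of $\Omega(s_j)$. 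Summing yields $\lambda_1\le \sum_j N(s_j)=N(\phi)$, proving (ii). Since $p\ge 5$ forces $N(s_j)\ge p^{k_j}(p^2-p-1)/p^2$ uniformly across types, one obtains $n<2N(\phi)$, so the two-row partition $(N(\phi)+1,\,n-N(\phi)-1)$ is a valid element of $\mathcal{B}_n(N(\phi)+1)\setminus\Omega(\phi)$, forcing $m(\phi)\le N(\phi)$.

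For the lower bound I want $\mathcal{B}_n(N(\phi))\subseteq\Omega(\phi)=\star_j\Omega(s_j)$ by absorbing the factors one type at a time. Starting from the (non-empty) collection of type-4 factors, iterated use of Proposition~\ref{prop: B star B}, legal because $N(s_j)=m(s_j)>p^{k_j}/2$ for every type-4 $s_j$, yields $\mathcal{B}_{n_4}(N_4)$ inside their star product. Absorbing type-3 factors proceeds identically. For each type-1 factor with $k_j\ge 1$, using $\Omega(s_j)=\mathcal{B}_{p^{k_j}}(p^{k_j}-2)\cup\{(p^{k_j})\}^\circ$ and Lemma~\ref{lem: 10.3} adds $+p^{k_j}=+N(s_j)$ to the exponent; the trivial case $k_j=0$ is handled by the branching rule. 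Throughout, the ratio $t/m$ stays above $1/2$, so these lemmas continue to apply at every step.

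The main obstacle is the type-2 absorption. Here $\Omega(s_j)$ equals $\mathcal{B}_{p^{k_j}}(N(s_j))$ minus the four "near-thin" partitions $\{(N(s_j),p^a-1,1),\,(N(s_j),2,1^{p^a-2})\}^\circ$ with $a=k_j-f(s_j)$, so Proposition~\ref{prop: B star B} alone only delivers $\mathcal{B}_m(t)\star\mathcal{B}_{p^{k_j}}(m(s_j))=\mathcal{B}_{m+p^{k_j}}(t+m(s_j))$, one short of the desired $\mathcal{B}_{m+p^{k_j}}(t+N(s_j))$. My plan is to prove a type-2 analogue of Lemma~\ref{lem: 10.3}: for $\lambda\in \mathcal{B}_{m+p^{k_j}}(t+N(s_j))$ with $\lambda_1=t+N(s_j)$ (the only genuinely new shapes, up to conjugation), construct $\mu=(t,\mu^-)\in\mathcal{B}_m(t)$ and $\nu=(N(s_j),\nu^-)\in\Omega(s_j)$ with $c^\lambda_{\mu\nu}>0$ by directly choosing $\nu^-\in\Omega(\triv_{P_{p^a}})$, exploiting the flexibility among partitions of $p^a$ that avoid $(p^a-1,1)^\circ$, and handling any case where the natural choice of $\nu$ is "bad" by a local swap (e.g.\ replacing a forced $\nu=(N(s_j),p^a-1,1)$ by the allowed $\nu'=(N(s_j),p^a-2,2)$ and adjusting $\mu$ correspondingly). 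Once this combinatorial lemma is in hand, the absorption argument completes, $\mathcal{B}_n(N(\phi))\subseteq\Omega(\phi)$, and combining with the first half yields $m(\phi)=N(\phi)$ together with~(ii).
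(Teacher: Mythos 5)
Your upper-bound half is correct, and it is a clean global version of what the paper does piecewise: positivity of an iterated Littlewood--Richardson coefficient forces each factor $\mu_j\subseteq\lambda$, so a thin $\lambda$ has all factors thin, and Theorem~\ref{thm:prime power m} (for type~4) together with the explicit descriptions of $\Omega(s_j)$ for types $1,2,3$ bounds each $(\mu_j)_1$ by $N(s_j)$; since $N(\phi)>n/2$, the two-row partition $(N(\phi)+1,n-N(\phi)-1)$ then caps $m(\phi)$. This is essentially an $R$-fold version of Lemma~\ref{lem: 10.8}, and your organisation (one global thin-partition bound plus factor-by-factor absorption) genuinely differs from the paper's induction on $R$ with its case analysis and appeal to Theorem~\ref{thm:10.6}.

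The gap is the type-2 absorption step, which you flag as the main obstacle and only sketch. The lemma you propose, $\mathcal{B}_m(t)\star\Omega(s_j)\supseteq\mathcal{B}_{m+p^{k_j}}\big(t+N(s_j)\big)$, is false in the generality suggested: take $t=m$ and $\lambda=\big(m+N(s_j),p^a-1,1\big)$ with $a=k_j-f(s_j)$. By Lemma~\ref{lem: LRfirstpart} any witnessing pair must have $\mu=(m)$ and $\nu_1=N(s_j)$, and every Littlewood--Richardson filling of $[\lambda\setminus(m)]$ whose weight has first part $N(s_j)$ has weight exactly $\big(N(s_j),p^a-1,1\big)$, which is not in $\Omega(s_j)$ because $(p^a-1,1)\notin\Omega(\triv_{P_{p^a}})$; no ``local swap'' is available here, since with $t=m$ the partition $\mu$ has no room to absorb the offending box. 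Your scheme survives only because a type-4 factor is absorbed first, so that at the type-2 stage $m-t\ge p+1>0$; this hypothesis must be recorded, and the absorption must then actually be proved. With that slack the proof is short and is precisely the paper's device: for $\lambda=\big(t+N(s_j),\bar\mu\big)$ use the ``in particular'' clause of Lemma~\ref{lem: 10.3} (valid as $p^a\ge p\ge 5$ and $m-t\ge1$) to write $c^{\bar\mu}_{\mu^-,\nu^-}>0$ with $\mu^-\vdash m-t$ and $\nu^-\in\Omega(\triv_{P_{p^a}})$, then lift via Lemma~\ref{lem: iteratedLR} to $c^{\lambda}_{(t,\mu^-),(N(s_j),\nu^-)}=c^{\bar\mu}_{\mu^-,\nu^-}>0$, noting $(t,\mu^-)\in\mathcal{B}_m(t)$ and $\big(N(s_j),\nu^-\big)\in\Omega(s_j)$. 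As written, however, the pivotal step of your lower bound rests on an unproved claim whose literal statement has counterexamples, so the proposal is not yet a complete proof.
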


We remark that Theorem~\ref{thm:10.6} in fact holds for $\phi=\triv_{P_n}$ as well, since $\Omega(\triv_{P_n})=\mathcal{P}(n)$ for $p\ge 5$ and $R\ge 2$ by \cite{GL1}.
We also notice that Theorem~\ref{thm:10.6} implies Theorem A. This follows by observing that if $\phi$ is quasi-trivial then $\tau(s_i)\neq 4$ for all $i\in [R]$.
Moreover, when $T(\phi)=(R-1,1,0,0)$ then $p\6{k_i-f(s_i)}=n-(m(\phi)+1)$.
Furthermore, Theorems~\ref{thm: M all n}, \ref{thm:10.6} and~\ref{thm:10.5} clearly complete the proof of Theorem B in the case where $n$ is not a power of $p$.

We refer the reader to Example~\ref{example: new omega thms} below for an illustration of the results of Theorems~\ref{thm:10.6} and~\ref{thm:10.5} in concrete examples.

\smallskip

Before proving Theorems~\ref{thm:10.6} and~\ref{thm:10.5} we show how to use them to obtain a proof of Theorem C from the introduction. 

\begin{proof}[Proof of Theorem C]
Theorem~\ref{thm: m and Omega} shows that for all $k\in\mathbb{N}$ and $s\in[\overline{p}]^k$, we have $N(s)\ge m(s)>\tfrac{p^k}{2}$. Using this together with Theorems~\ref{thm:10.6} and~\ref{thm:10.5} we deduce that $m(\phi)\ge \sum_{i=1}^R N(s_i) > \tfrac{n}{2}$ for every $\phi\in\Lin(P_n)$.
It follows that $\mathcal{B}_n(\tfrac{n}{2})\subseteq\Omega_n$.
This in particular implies 
Theorem C, using the following classical result of Erd\H{o}s and Lehner \cite[(1.4)]{EL}: if $f(n)$ is any function such that $f(n)\to\infty$ as $n\to\infty$, then for all but $o(|\mathcal{P}(n)|)$ partitions $\lambda$ of $n$, the quantities $\lambda_1$ and $l(\lambda)$ lie between $\sqrt{n}\cdot( \tfrac{\log n}{c} \pm f(n) )$ where $c$ is a constant.
\end{proof}

\medskip

\subsection{Proofs of Theorems~\ref{thm:10.6} and~\ref{thm:10.5}}

Let $n\in\mathbb{N}$ and $\phi\in\Lin(P_n)$ be as described after the proof of Theorem~\ref{thm: M all n}. The proof of Theorem~\ref{thm:10.6} follows Lemma~\ref{lem: 10.7} below.
\begin{lemma}\label{lem: 10.7}
	Suppose that
	$T(\phi)=(R-1,1,0,0)$
	and let $i\in[R]$ be such that $\tau(s_i)=2$. Then
	$$\Omega(\phi) = \mathcal{B}_n(N(\phi)-1) \sqcup\{(N(\phi),\mu)\ |\ \mu\in\Omega(\triv_{P_{p^{k_i-f(s_i)}}}) \}^\circ.$$
\end{lemma}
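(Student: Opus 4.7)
The plan is to use Lemma~\ref{lem: omega star omega} to split $\Omega(\phi)$ into a star product involving the trivial part and the single type-$2$ sequence $s_i$. Let $d=p^{k_i-f(s_i)}$ and set $m=n-p^{k_i}\ge 1$ (using $R\ge 2$). A direct computation from the definitions of $N(s_j)$ gives $N(\phi)=N(s_i)+\sum_{j\ne i}p^{k_j}=(p^{k_i}-d)+(n-p^{k_i})=n-d$. Since $\phi=\triv_{P_m}\times \phi(s_i)$, Lemma~\ref{lem: omega star omega} yields $\Omega(\phi)=\Omega(\triv_{P_m})\star \Omega(s_i)$, while Theorem~\ref{thm: m and Omega} gives
$$\Omega(s_i)=\mathcal{B}_{p^{k_i}}(p^{k_i}-d-1)\sqcup\{(p^{k_i}-d,\mu):\mu\in\Omega(\triv_{P_d})\}^\circ.$$
Thus the target identity reads $\Omega(\phi)=\mathcal{B}_n(n-d-1)\sqcup\{(n-d,\mu):\mu\in\Omega(\triv_{P_d})\}^\circ$.

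To establish $\mathcal{B}_n(n-d-1)\subseteq\Omega(\phi)$, I would argue by cases on $m$. Note first that $\tau(s_i)=2$ forces $k_i\ge 2$ and $d\le p^{k_i-1}$, whence the key arithmetic estimate $p^{k_i}/2<p^{k_i}-d-1$ holds for $p\ge 5$. If $m$ is not a $p$-power, then $\Omega(\triv_{P_m})=\mathcal{P}(m)=\mathcal{B}_m(m)$ by \cite[Theorem A]{GL1} and Proposition~\ref{prop: B star B} gives $\mathcal{B}_m(m)\star\mathcal{B}_{p^{k_i}}(p^{k_i}-d-1)=\mathcal{B}_n(n-d-1)$. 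If $m=p^l$ with $l\ge 1$, then $\Omega(\triv_{P_{p^l}})=\mathcal{B}_{p^l}(p^l-2)\cup\{(p^l)\}^\circ$ by \cite[Theorem A]{GL1}, and Lemma~\ref{lem: 10.3} applied with the roles $m\mapsto p^{k_i}$, $t\mapsto p^{k_i}-d-1$, $n\mapsto p^l$ (noting $p^l\ge 5$) yields the same identity. In the remaining case $m=1$, $\Omega(\triv_{P_1})=\{(1)\}$ and I would verify directly via the branching rule that $\{(1)\}\star\mathcal{B}_{p^{k_i}}(p^{k_i}-d-1)=\mathcal{B}_{p^{k_i}+1}(p^{k_i}-d)$; the non-trivial direction reduces to showing that every $\lambda\in\mathcal{B}_{p^{k_i}+1}(p^{k_i}-d)$ admits a removable box whose removal lies in $\mathcal{B}_{p^{k_i}}(p^{k_i}-d-1)$, which follows from the estimate $2d\le 2p^{k_i-1}<p^{k_i}-1$, ruling out the case $\lambda_1=l(\lambda)=p^{k_i}-d$.

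For $\{(n-d,\mu):\mu\in\Omega(\triv_{P_d})\}^\circ\subseteq\Omega(\phi)$, I would take $\alpha=(m)$ and $\beta=(p^{k_i}-d,\mu)$: observe that $\alpha\in\Omega(\triv_{P_m})$ in every subcase (since $(m)$ never equals the excluded partition $(m-1,1)$), and $\beta\in\Omega(s_i)$ by Theorem~\ref{thm: m and Omega}. The Littlewood--Richardson coefficient $c^\lambda_{\alpha\beta}=1$ for $\lambda=(n-d,\mu)$ since $\lambda/\beta$ is the horizontal strip consisting of the $m$ rightmost boxes in row 1, so $\lambda\in\Omega(\phi)$; closure of $\Omega(\phi)$ under conjugation produces the $^\circ$. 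Conversely, let $\lambda\in\Omega(\phi)$ and pick $\alpha\in\Omega(\triv_{P_m})$, $\beta\in\Omega(s_i)$ with $c^\lambda_{\alpha\beta}>0$. Lemma~\ref{lem: LRfirstpart} gives $\lambda_1,l(\lambda)\le m+(p^{k_i}-d)=n-d$. Either $\lambda\in\mathcal{B}_n(n-d-1)$, or after possibly conjugating we may assume $\lambda_1=n-d$, forcing $\alpha=(m)$ and $\beta_1=p^{k_i}-d$; the latter forces $\beta=(p^{k_i}-d,\mu)$ for some $\mu\in\Omega(\triv_{P_d})$ (the size comparison $2d\le 2p^{k_i-1}<p^{k_i}-1$ rules out the conjugate form $(p^{k_i}-d,\nu)'$, which would require $l(\nu)=p^{k_i}-d-1>d\ge l(\nu)$), and then $\lambda/\beta$ being a horizontal strip of size $m$ forces $\lambda=(n-d,\mu)$.

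The main technical obstacle is the lower-bound inclusion $\mathcal{B}_n(n-d-1)\subseteq\Omega(\phi)$ in the degenerate case $m=1$, where neither Proposition~\ref{prop: B star B} nor Lemma~\ref{lem: 10.3} applies in the form needed; the required star identity must be verified by an ad hoc removable-box argument relying on the arithmetic estimate $2d<p^{k_i}-1$. The remaining arguments amount to careful bookkeeping of the Littlewood--Richardson analysis at the extreme value $\lambda_1=n-d$, together with checking that the single partition $(m)$ always belongs to $\Omega(\triv_{P_m})$.
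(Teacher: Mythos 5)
Your proposal is correct and takes essentially the same route as the paper: decompose $\Omega(\phi)=\Omega(\triv_{P_m})\star\Omega(s_i)$ via Lemma~\ref{lem: omega star omega}, use the prime-power description of $\Omega(s_i)$, get $\mathcal{B}_n(N(\phi)-1)\subseteq\Omega(\phi)\subseteq\mathcal{B}_n(N(\phi))$ from Proposition~\ref{prop: B star B} and Lemma~\ref{lem: 10.3} according to whether $m$ is a $p$-power, and settle the boundary $\lambda_1=N(\phi)$ by the same Littlewood--Richardson first-row analysis (your extra care in excluding the conjugate form of $\beta$ is a harmless refinement). The only deviation is your separate treatment of $m=1$, where the obstacle you flag does not actually arise: $\mathcal{B}_1(1)$ satisfies the hypotheses of Proposition~\ref{prop: B star B} (take $n'=t'=1$), so that case is covered by the same computation as for $m$ not a power of $p$, and your removable-box argument, while correct, is redundant.
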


\begin{proof}
Let $m=n-p^{k_i}$, so $\phi=\phi(s_1)\times\cdots\times\phi(s_R) = \triv_{P_m}\times\phi(s_i)$. Since $R\ge 2$ and $\tau(s_i)=2$, we have that $m\neq 0$ and $k_i\ge 2$. To ease the notation, let $k=k_i$, $s=s_i$ and $f=f(s_i)$. 

By Lemma~\ref{lem: omega star omega}, we have that $\Omega(\phi)=\Omega(\triv_{P_m})\star\Omega(s)$ and $\Omega(\triv_{P_m})$ is determined in \cite[Theorem A]{GL1}. Moreover, by Corollary~\ref{cor: thm A for prime power} we have that
$$\Omega(s) = \mathcal{B}_{p^k}(p^k-p^{k-f}-1)\sqcup\{(p^k-p^{k-f},\mu)\ |\ \mu\in\Omega(\triv_{P_{p^{k-f}}}) \}^\circ,$$
so in particular 
\begin{equation}\label{eqn: omega(s) sandwich}
\mathcal{B}_{p^k}(m(s))\subseteq\Omega(s)\subseteq\mathcal{B}_{p^k}(m(s)+1). 
\end{equation}

\noindent\emph{Case 1: if $m$ is not a power of $p$, then $\Omega(\triv_{P_m})=\mathcal{P}(m)$.} Since $m(s)>\tfrac{p^k}{2}$, applying $\mathcal{P}(m)\star-$ to \eqref{eqn: omega(s) sandwich}, we obtain
$$\mathcal{B}_n\big(N(\phi)-1\big) \subseteq\Omega(\phi)\subseteq \mathcal{B}_n\big(N(\phi)\big)$$
by Proposition~\ref{prop: B star B}, as $N(\phi)=m+p^k-p^{k-f}=n-p^{k-f}$. Since $\Omega(\phi)^\circ=\Omega(\phi)$, it suffices to 
show that given $\mu\in\mathcal{P}(p^{k-f})$, then $(N(\phi), \mu)\in\Omega(\phi)$ if and only if $\mu\in\Omega(\triv_{P_{p^{k-f}}})$.

So fix a partition $\lambda\vdash n$ such that $\lambda_1=N(\phi)$.
If $\lambda\in\Omega(\phi)=\mathcal{P}(m)\star\Omega(s)$, then $c^\lambda_{\alpha\beta}>0$ for some $\alpha\vdash m$ and $\beta\in\Omega(s)$. Hence $\lambda_1\le\alpha_1+\beta_1\le m+(p^k-p^{k-f})=N(\phi)$, so in fact this holds with equality. Thus $\alpha=(m)$ and $\beta=(\beta_1,\mu)$ where $\beta_1=p^k-p^{k-f}$, and $\mu\in\Omega(\triv_{P_{p^{k-f}}})$ since $\beta\in\Omega(s)$. Moreover, $\lambda_1=\alpha_1+\beta_1$ and $\alpha=(m)$ together imply that $\beta=(\lambda_1-m,\lambda_2,\lambda_3,\dotsc)$ and that $\lambda=(N(\phi),\mu)$.

Conversely, if $\lambda=(N(\phi),\mu)$ for some $\mu\in\Omega(\triv_{P_{p^{k-f}}})$, then clearly $\lambda\in\mathcal{P}(m)\star\Omega(s)=\Omega(\phi)$ since 
$c\6\lambda_{(m), (p^k-p^{k-f},\mu)}\neq 0$, and thus the set $\Omega(\phi)$ is as claimed.

\smallskip

\noindent\emph{Case 2: if $m=p^l$ for some $l\in\mathbb{N}$, then $\Omega(\triv_{P_m})=\mathcal{P}(p^l)\setminus\{(p^l-1,1)\}^\circ$.}
We have that
$$\Omega(\phi)=\Omega(\triv_{P_{p^l}})\star\Omega(s)\subseteq\mathcal{P}(p^l)\star\mathcal{B}_{p^k}(m(s)+1)=\mathcal{B}_n\big(N(\phi)\big),$$
by Proposition~\ref{prop: B star B}. On the other hand, by Lemma~\ref{lem: 10.3} we have that
$$\Omega(\phi) = \Omega(\triv_{P_{p^l}}) \star\Omega(s)\supseteq \big(\mathcal{P}(p^l)\setminus\{(p^l-1,1)\}^\circ\big) \star\mathcal{B}_{p^k}(p^k-p^{k-f}-1) = \mathcal{B}_n\big(N(\phi)-1\big).$$	
Arguing exactly as in Case 1 we deduce that $\Omega(\phi)=\mathcal{B}_n(N(\phi)-1) \sqcup\{(N(\phi),\mu)\ |\ \mu\in\Omega(\triv_{P_{p^{k-f}}}) \}^\circ$, as required.
\end{proof}

\begin{proof}[Proof of Theorem~\ref{thm:10.6}]
The case $T(\phi)=(R-1,1,0,0)$ is treated in Lemma~\ref{lem: 10.7}.
	We may now assume that $T(\phi)\neq (R-1,1,0,0)$. That is, $s_1,\dotsc,s_R$ are such that \emph{either} there exists $i\in[R]$ with $\tau(s_i)=3$, \emph{or} there exists $i\ne j\in[R]$ with $\tau(s_i)=\tau(s_j)=2$ and $\tau(s_l)\in\{1,2\}$ for all $l\in[R]$. 
	We proceed by induction on $R$.
	
	We begin with the base case $R=2$. 
	Since we may reorder the $s_i$ without loss of generality, we may assume that
	$$\big( \tau(s_1), \tau(s_2) \big) \in\{ (1,3), (2,3), (3,3), (2,2)\}.$$
	The arguments in each case are similar, but for clarity we will treat each one separately. To ease the notation we let $k=k_1$, $f=f(s_1)$ (if $\tau(s_1)\ne 1$), $l=k_2$ and $e=f(s_2)$. 
	
	\smallskip
	
	\noindent\textbf{If $\big(\tau(s_1),\tau(s_2)\big)=(1,3)$:} we have that 
	$$\Omega(\phi) = \begin{cases}\mathcal{P}(1)\star\mathcal{B}_{p^l}(p^l-1) & \mathrm{if}\ k=0,\\
	(\mathcal{P}(p^k)\setminus\{(p^k-1,1)\}^\circ)\star\mathcal{B}_{p^l}(p^l-1) & \mathrm{otherwise},\end{cases}$$
	which equals $\mathcal{B}_n\big(N(\phi)\big)$ in each instance by Proposition~\ref{prop: B star B} and Lemma~\ref{lem: 10.3} respectively, as $N(\phi)=p^k+p^l-1$.
	
	\smallskip
	
	\noindent\textbf{If $\big(\tau(s_1),\tau(s_2)\big)=(2,3)$:} by Corollary~\ref{cor: thm A for prime power} we have $\mathcal{B}_{p^k}(m(s_1))\subseteq\Omega(s_1)\subseteq\mathcal{B}_{p^k}(m(s_1)+1)$. Thus Proposition~\ref{prop: B star B} shows that  
	$$\mathcal{B}_n\big(N(\phi)-1\big)\subseteq\Omega(\phi)\subseteq\mathcal{B}_n\big(N(\phi)\big)$$
	since $N(\phi)=m(s_1)+1+m(s_2)=p^k-p^{k-f}+p^l-1$. Let $\lambda=(N(\phi),\mu)$ where $\mu$ is any partition of $p^{k-f}+1$. Since $p^{k-f}+1$ is not a power of $p$, then
	$\Omega(\triv_{P_{p^{k-f}+1}})=\mathcal{P}(p^{k-f}+1)$ by \cite[Theorem A]{GL1}. But $\triv_{P_{p^{k-f}+1}} = \triv_{P_{p^{k-f}}}\times\triv_{P_1}$, so $\mu\in\Omega(\triv_{P_{p^{k-f}+1}}) = \Omega(\triv_{P_{p^{k-f}}})\star\Omega(\triv_{P_1})$. That is, there exists $\nu\in\Omega(\triv_{P_{p^{k-f}}})$ such that $c^\mu_{\nu,(1)}>0$. Then by Lemma~\ref{lem: iteratedLR},
	$$c^{(N(\phi),\mu)}_{(p^k-p^{k-f},\nu), (p^l-1,1)} = c^\mu_{\nu,(1)}>0,$$
	and thus $\lambda\in\Omega(s_1)\star\Omega(s_2)=\Omega(\phi)$. Since $\Omega(\phi)^\circ=\Omega(\phi)$, we have that $\Omega(\phi)=\mathcal{B}_n\big(N(\phi)\big)$.
	
	\smallskip
	
	\noindent\textbf{If $\big(\tau(s_1),\tau(s_2)\big)=(3,3)$:} then $\Omega(\phi)=\mathcal{B}_{p^k}(p^k-1)\star\mathcal{B}_{p^l}(p^l-1) =\mathcal{B}_n\big(N(\phi)\big)$, by Proposition~\ref{prop: B star B}.
	
	\smallskip
	
	\noindent\textbf{If $\big(\tau(s_1),\tau(s_2)\big)=(2,2)$:} then clearly $\mathcal{B}_n\big(N(\phi)-2\big) = \mathcal{B}_{p^k}(m(s_1))\star\mathcal{B}_{p^l}(m(s_2)) \subseteq\Omega(\phi)$ and $\Omega(\phi)\subseteq \mathcal{B}_{p^k}(m(s_1)+1)\star\mathcal{B}_{p^l}(m(s_2)+1) = \mathcal{B}_n\big(N(\phi)\big)$, by Proposition~\ref{prop: B star B}. Since $\Omega(\phi)^\circ=\Omega(\phi)$, in order to show $\mathcal{B}_n\big(N(\phi)\big) = \Omega(\phi)$ it remains to prove that
	$$\lambda=(N(\phi)-2+j,\mu)\ \in\ \Omega(\phi),\ \text{for all}\  j\in\{1,2\}\ \text{and for all}\ \mu\vdash p^{k-f}+p^{l-e}+2-j.$$ 
	Fix some $\mu\vdash p^{k-f}+p^{l-e}+2-j$ and consider $\lambda=(N(\phi)-2+j,\mu)$.
	Clearly $|\mu|$ is not a power of $p$, so 
	$$\mu\in\mathcal{P}(|\mu|) = \Omega(\triv_{P_{|\mu|}}) = \Omega(\triv_{P_{p^{k-f}}})\star\Omega(\triv_{P_{p^{l-e}+2-j}}).$$
	That is, there exist $\nu\in\Omega(\triv_{P_{p^{k-f}}})$ and $\omega\in\Omega(\triv_{P_{p^{l-e}+2-j}})$ such that $c^\mu_{\nu,\omega}>0$. Then by Lemma~\ref{lem: iteratedLR}
	$$ c^{(N(\phi)-2+j,\mu)}_{(p^k-p^{k-f},\nu), (p^l-p^{l-e}-2+j,\omega)} = c^\mu_{\nu,\omega}>0.$$
Thus $\lambda\in\Omega(\phi)$ and hence $\mathcal{B}_n\big(N(\phi)\big)=\Omega(\phi)$ in all cases when $R=2$.
	
	\smallskip
	
	Now for the inductive step: let $R\ge 3$ and suppose that the statement of the theorem holds for $R-1$. Since $T(\phi)\neq (R-1,1,0,0)$, there exists $i\in [R]$ such that the multiset $\{s_1,\dotsc,s_{i-1},s_{i+1},\dotsc,s_R\}$ corresponds to a linear character $\psi\in\mathrm{Lin}(P_{n-p^{k_i}})$ with $T(\psi)\neq (R-2,1,0,0)$. Without loss of generality, let $i=1$. Let $k=k_1$, $s=s_1$, $f=f(s_1)$ (if $\tau(s_1)\ne 1$) and let $\psi\in\Lin(P_{n-p^k})$ be the above linear character such that $\phi=\phi(s)\times\psi$. Then $\Omega(\phi)=\Omega(s)\star\Omega(\psi)$ and $\Omega(\psi)=\mathcal{B}_{n-p^k}\big(N(\psi)\big)$ by the inductive hypothesis. In order to show that $\Omega(\phi)=\mathcal{B}_n\big(N(\phi)\big)$, we split into cases depending on $\tau(s)\in\{1,2,3\}$.
	
	\smallskip
	
	\noindent\textbf{If $\tau(s)=1$:} then
	$$\Omega(\phi) = \begin{cases}\mathcal{P}(1)\star\mathcal{B}_{n-p^k}\big(N(\psi)\big) & \mathrm{if}\ k=0,\\
	(\mathcal{P}(p^k)\setminus\{(p^k-1,1)\}^\circ)\star\mathcal{B}_{n-p^k}\big(N(\psi)\big) & \mathrm{otherwise}.\end{cases}$$
Hence $\Omega(\phi)=\mathcal{B}_n\big(N(\phi)\big)$ by Proposition~\ref{prop: B star B} and Lemma~\ref{lem: 10.3}, as $N(\phi)=p^k+N(\psi)$.
	
	\smallskip
	
	\noindent\textbf{If $\tau(s)=2$:} then
	$$\mathcal{B}_n\big(N(\phi)-1\big)\subseteq\Omega(\phi)\subseteq\mathcal{B}_n\big(N(\phi)\big),$$
	where $N(\phi)=m(s)+1+N(\psi)=p^k-p^{k-f}+N(\psi)$. Since $\Omega(\phi)^\circ=\Omega(\phi)$, it suffices to show that
	$$\lambda=(N(\phi),\mu)\ \in\ \Omega(\phi)\ \text{for all}\ \mu\in\mathcal{P}(n-N(\phi))=\mathcal{P}(n-p^k+p^{k-f}-N(\psi)).$$
	Fix such a partition $\lambda$. Since $p^{k-f}\ge p\ge 5$ then by Lemma~\ref{lem: 10.3} we have that
	$$\mu\in\mathcal{P}\big(n-p^k+p^{k-f}-N(\psi)\big) = \big( \mathcal{P}(p^{k-f})\setminus\{(p^{k-f}-1,1)\}^\circ \big) \star \mathcal{P}(n-p^k-N(\psi)).$$
 Thus there exist $\nu\in \mathcal{P}(p^{k-f})\setminus\{(p^{k-f}-1,1)\}^\circ$ and $\omega\in \mathcal{P}(n-p^k-N(\psi))$ such that $c^\mu_{\nu,\omega}>0$. This shows by Lemma~\ref{lem: iteratedLR} that
	$$c^{(N(\phi),\mu)}_{(p^k-p^{k-f},\nu), (N(\psi),\omega)} = c^\mu_{\nu,\omega}>0,$$
	and so $\lambda\in\Omega(s)\star\mathcal{B}_n\big(N(\psi)\big)=\Omega(\phi)$ as required. (Here we used that $N(\psi)\lneqq n-p^k$. This holds because $T(\psi)\neq (R-1,0,0,0)$ and hence $\psi\neq \triv_{P_{n-p^k}}$.)
	
	\smallskip
	
	\noindent\textbf{If $\tau(s)=3$:} then $\Omega(\phi)=\mathcal{B}_{p^k}(p^k-1)\star\mathcal{B}_{n-p^k}\big(N(\psi)\big) = \mathcal{B}_n\big(N(\phi)\big)$ by Proposition~\ref{prop: B star B}.
	
	Hence $\Omega(\phi)=\mathcal{B}_n\big(N(\phi)\big)$ in all cases.
\end{proof}

\begin{lemma}\label{lem: 10.8}
	For $i\in\{1,2\}$, let $n_i,m_i\in\mathbb{N}$ be such that $\tfrac{n_i}{2}<m_i\le n_i$. Furthermore, let $\Delta_i\subseteq\mathcal{P}(n_i)$ be such that $\mathcal{B}_{n_i}(m_i)\subseteq\Delta_i$ and $\Delta_i\setminus\mathcal{B}_{n_i}(m_i)$ contains no thin partitions. Then
	$$\mathcal{B}_{n_1+n_2}(m_1+m_2) \subseteq \Delta_1\star\Delta_2$$
	and $(\Delta_1\star\Delta_2)\setminus\mathcal{B}_{n_1+n_2}(m_1+m_2)$ contains no thin partitions.
\end{lemma}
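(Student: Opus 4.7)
\medskip

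\noindent\textbf{Proof proposal.} The plan is to establish the two assertions separately; both follow rather quickly from results already developed in the paper, once one observes that the ``thin'' hypothesis is preserved under subshapes.

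First, for the inclusion $\mathcal{B}_{n_1+n_2}(m_1+m_2) \subseteq \Delta_1\star\Delta_2$, I would simply use $\mathcal{B}_{n_i}(m_i)\subseteq\Delta_i$ for $i=1,2$, which gives $\mathcal{B}_{n_1}(m_1)\star\mathcal{B}_{n_2}(m_2)\subseteq\Delta_1\star\Delta_2$. Since the hypotheses $\tfrac{n_i}{2}<m_i\le n_i$ are exactly those of Proposition~\ref{prop: B star B}, we conclude that $\mathcal{B}_{n_1}(m_1)\star\mathcal{B}_{n_2}(m_2)=\mathcal{B}_{n_1+n_2}(m_1+m_2)$, giving the first claim.

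For the second assertion, suppose $\lambda\in(\Delta_1\star\Delta_2)\setminus\mathcal{B}_{n_1+n_2}(m_1+m_2)$ is thin; we aim for a contradiction. By definition there exist $\mu\in\Delta_1$ and $\nu\in\Delta_2$ with $c^\lambda_{\mu\nu}>0$. The key observation is that when $\lambda$ is thin, both $\mu$ and $\nu$ are thin: since $c^\lambda_{\mu\nu}>0$ forces $[\mu]\subseteq[\lambda]$, and by the symmetry $c^\lambda_{\mu\nu}=c^\lambda_{\nu\mu}$ also $[\nu]\subseteq[\lambda]$, one checks that any subshape of a hook is a hook, any subshape of a partition of length at most $2$ has length at most $2$, and any subshape of a partition with first part at most $2$ has first part at most $2$. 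Hence $\mu$ and $\nu$ are both thin.

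Now, by the hypothesis that $\Delta_i\setminus\mathcal{B}_{n_i}(m_i)$ contains no thin partitions, we deduce $\mu\in\mathcal{B}_{n_1}(m_1)$ and $\nu\in\mathcal{B}_{n_2}(m_2)$. Applying Lemma~\ref{lem: LRfirstpart} then yields $\lambda_1\le\mu_1+\nu_1\le m_1+m_2$ and $l(\lambda)\le l(\mu)+l(\nu)\le m_1+m_2$, so $\lambda\in\mathcal{B}_{n_1+n_2}(m_1+m_2)$, contradicting the choice of $\lambda$. I expect no serious obstacle: the only mild subtlety is the verification that subshapes preserve the three ``thin'' conditions, which is a direct diagrammatic check.
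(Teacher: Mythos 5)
Your proposal is correct and follows essentially the same route as the paper: Proposition~\ref{prop: B star B} gives the inclusion $\mathcal{B}_{n_1+n_2}(m_1+m_2)\subseteq\Delta_1\star\Delta_2$, and the second claim is obtained from $c^\lambda_{\mu\nu}>0$ forcing $\mu,\nu\subseteq\lambda$ together with Lemma~\ref{lem: LRfirstpart} and the hypothesis on thin partitions. The only (harmless) difference is presentational: you treat all thin $\lambda$ uniformly by observing that subpartitions of thin partitions are thin and concluding $\mu\in\mathcal{B}_{n_1}(m_1)$, $\nu\in\mathcal{B}_{n_2}(m_2)$ directly, whereas the paper argues the $l(\lambda)\le 2$ case by contradiction and dispatches the remaining thin shapes with ``a similar argument''.
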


\begin{proof}
	By Proposition~\ref{prop: B star B}, we know that $\mathcal{B}_{n_1+n_2}(m_1+m_2) \subseteq \Delta_1\star\Delta_2$.
	
	First, suppose $\lambda\in (\Delta_1\star\Delta_2)\setminus\mathcal{B}_{n_1+n_2}(m_1+m_2)$ satisfies $l(\lambda)\le 2$. Then $\lambda_1>m_1+m_2$. But $\lambda\in\Delta_1\star\Delta_2$ implies that $c^\lambda_{\mu,\nu}>0$ for some $\mu\in\Delta_1$ and $\nu\in\Delta_2$. Thus $\mu_1+\nu_1\ge\lambda_1$ by Lemma~\ref{lem: LRfirstpart}, giving either $\mu_1>m_1$ or $\nu_1>m_2$. However, $\mu,\nu\subseteq\lambda$ so $l(\mu),l(\nu)\le l(\lambda)\le 2$. That is, both $\mu$ and $\nu$ are thin but either $\mu\in\Delta_1\setminus\mathcal{B}_{n_1}(m_1)$ or $\nu\in\Delta_2\setminus\mathcal{B}_{n_2}(m_2)$, a contradiction.
	A similar argument shows that $(\Delta_1\star\Delta_2)\setminus\mathcal{B}_{n_1+n_2}(m_1+m_2)$ contains no other thin partitions.
%
\end{proof}

We are now ready to prove Theorem~\ref{thm:10.5}.

\begin{proof}[Proof of Theorem~\ref{thm:10.5}]
	We show that $\mathcal{B}_n\big(N(\phi)\big)\subseteq\Omega(\phi)$ and that $\Omega(\phi)\setminus\mathcal{B}_n\big(N(\phi)\big)$ contains no thin partitions. From this we also deduce that $m(\phi)=N(\phi)$. 
	
	We proceed by induction on $R$, beginning with the base case $R=2$. (The case of $R=1$ follows from Theorem~\ref{thm: m and Omega}.)
	Without loss of generality we may assume that $\tau(s_2)=4$. Let $k=k_1$, $f=f(s_1)$ (if $\tau(s_1)\ne 1$) and let $l=k_2$. By Lemma~\ref{lem: omega star omega}, we know that $\Omega(\phi)=\Omega(s_1)\star\Omega(s_2)$, and from Theorem~\ref{thm: m and Omega} we know that $\Omega(s_2)\setminus\mathcal{B}_{p^l}(m(s_2))$ contains no thin partitions. We split into cases according to $\tau(s_1)\in\{1,2,3,4\}$.
	
	\smallskip
	
	\noindent\textbf{(i) If $\tau(s_1)=1$:} then $N(\phi)=p^k+m(s_2)$. If $k=0$, then Proposition~\ref{prop: B star B} implies that
	$$\Omega(\phi) = \mathcal{P}(1)\star\Omega(s_2) \supseteq \mathcal{P}(1)\star \mathcal{B}_{p^l}(m(s_2)) = \mathcal{B}_n\big(N(\phi)\big).$$
	Moreover, $\Omega(\phi)\setminus\mathcal{B}_n\big(N(\phi)\big)$ contains no thin partitions by Lemma~\ref{lem: 10.8}. Otherwise, if $k\ge 1$ then
	$$\Omega(\phi)\supseteq \big(\mathcal{P}(p^k)\setminus\{(p^k-1,1)\}^\circ \big) \star\mathcal{B}_{p^l}(m(s_2)) = \mathcal{B}_n\big(N(\phi)\big)$$
	by Lemma~\ref{lem: 10.3}. 
	Suppose $\lambda\in\Omega(\phi)\setminus\mathcal{B}_n\big(N(\phi)\big)$ satisfies $l(\lambda)\le 2$, so $\lambda_1>N(\phi)=p^k+m(s_2)$. Then $c^\lambda_{\mu,\nu}>0$ for some $\mu\in\Omega(s_1)$ and $\nu\in\Omega(s_2)$, and $\lambda_1\le \mu_1+\nu_1$ by Lemma~\ref{lem: LRfirstpart}. But $\mu_1\le p^k$, so $\nu_1>m(s_2)$. However, $\nu\subseteq\lambda$ so $l(\nu)\le 2$, contradicting $\nu\in\Omega(s_2)\setminus\mathcal{B}_{p^l}(m(s_2))$. Also there cannot be any $\lambda\in\Omega(\phi)\setminus\mathcal{B}_n\big(N(\phi)\big)$ such that $\lambda_1\le 2$, since $\Omega(\phi)$ and $\mathcal{B}_n\big(N(\phi)\big)$ are both closed under conjugation. A similar argument shows that there are no hooks in $\Omega(\phi)\setminus\mathcal{B}_n\big(N(\phi)\big)$. 
	
	\smallskip
	
	\noindent\textbf{(ii) If $\tau(s_1)=2$:} then by Theorem~\ref{thm: m and Omega} we have that
	$$\Omega(\phi)\supseteq\mathcal{B}_{p^k}(p^k-p^{k-f}-1)\star\mathcal{B}_{p^l}(m(s_2)) = \mathcal{B}_n\big(N(\phi)-1\big).$$
	Let $\lambda=(N(\phi),\mu)$ where $\mu\in \mathcal{P}(n-N(\phi))$. Using  Lemma~\ref{lem: 10.3} we observe that $\mathcal{P}(n-N(\phi)) = \Omega(\triv_{P_{p^{k-f}}})\star\mathcal{P}(p^l-m(s_2))$. Hence
	$ c^\lambda_{(p^k-p^{k-f},\nu),(m(s_2),\omega)} = c^\mu_{\nu,\omega}>0$
	for some $\nu\in \Omega(\triv_{P_{p^{k-f}}})$ and $\omega\vdash p^l-m(s_2)$, by Lemma~\ref{lem: iteratedLR}.
	Thus $\lambda\in\Omega(s_1)\star\Omega(s_2)=\Omega(\phi)$, and hence $\mathcal{B}_n\big(N(\phi)\big)\subseteq\Omega(\phi)$ since $\Omega(\phi)^\circ=\Omega(\phi)$.
	If $\lambda\in\Omega(\phi)\setminus\mathcal{B}_n\big(N(\phi)\big)$ satisfies $l(\lambda)\le 2$, then $c^\lambda_{\mu,\nu}>0$ for some $\mu\in\Omega(s_1)$ and $\nu\in\Omega(s_2)$. Since $\mu_1\le p^k-p^{k-f}$ we must have $v_1>m(s_2)$, as $\lambda_1>N(\phi)=p^k-p^{k-f}+m(s_2)$. But then $\nu\in\Omega(s_2)\setminus\mathcal{B}_{p^l}(m(s_2))$ and $l(\nu)\le 2$, a contradiction. A similar argument shows that $\Omega(\phi)\setminus\mathcal{B}_n\big(N(\phi)\big)$ contains no other thin partitions.
	
		\smallskip
	
	\noindent\textbf{(iii) If $\tau(s_1)\in\{3,4\}$:} then the assertions follow from Proposition~\ref{prop: B star B} and Lemma~\ref{lem: 10.8}.
	
	\smallskip	
	
	Finally, we turn to the inductive step. Assume $R\ge 3$ and that the statement of the theorem holds for $R-1$. Let $k=k_1$, and let $\psi\in\Lin(P_{n-p^k})$ be such that $\phi=\phi(s_1)\times\psi$, so $\psi$ corresponds to $s_2,\dotsc,s_R$ and $\Omega(\phi)=\Omega(s_1)\star\Omega(\psi)$. We distinguish two cases, depending on the type $T(\phi)$.

Suppose that $T(\phi)=(R-2,1,0,1)$. Since $R\ge 3$, we may without loss of generality assume that $\tau(s_1)=1$. By the inductive hypothesis, $m(\psi)=N(\psi)$ and $\Omega(\psi)\setminus\mathcal{B}_{n-p^k}\big(N(\psi)\big)$ contains no thin partitions. Then $\mathcal{B}_n\big(N(\phi)\big)\subseteq\Omega(\phi)$ and arguing as in case (i) above we obtain that $\Omega(\phi)\setminus\mathcal{B}_n\big(N(\phi)\big)$ contains no thin partitions.
	
Suppose now that $T(\phi)\neq (R-2,1,0,1)$. In this case we may without loss of generality assume that $\tau(s_1)=4$.
	If $|\{i\in\{2,3,\dotsc,R\}\ |\ \tau(s_i)=4\}|=0$ then the first part of Theorem~\ref{thm:10.6} 
	gives us that $\Omega(\psi)=\mathcal{B}_{n-p^k}\big(N(\psi)\big)$. The required results then follow from Proposition~\ref{prop: B star B} and Lemma~\ref{lem: 10.8}.
	On the other hand, if $|\{i\in\{2,3,\dotsc,R\}\ |\ \tau(s_i)=4\}|>0$, then by the inductive hypothesis we have that $m(\psi)=N(\psi)$ and $\Omega(\psi)\setminus\mathcal{B}_{n-p^k}\big(N(\psi)\big)$ contains no thin partitions. The required results then also follow from Proposition~\ref{prop: B star B} and Lemma~\ref{lem: 10.8}.
\end{proof}

We conclude this section with examples illustrating the main theorems.

\begin{eg}\label{example: new omega thms}
	Let $p=5$. We consider (i) $n=25$, (ii) $n=125$ and (iii) $n=175$.
	
	\smallskip
	
	\noindent\textbf{(i) $n=25$.} 
	We describe $\Omega(\phi)$ completely for all $\phi=\phi(s)\in\Lin(P_{25})$ using Theorem~\ref{thm: m and Omega} and \cite[Theorem A]{GL1}. This is summarised in Table~\ref{table:n=25} below, where each $*$ represents any element of $\{1,2,\dots,p-1\}$ and $\mathcal{P}'(m):=\mathcal{P}(m)\setminus\{(m-1,1)\}^\circ = \mathcal{B}_m(m-2)\sqcup\{(m)\}^\circ$ for $m\in\mathbb{N}_{\ge 5}$.
	
	\begin{table}[h]
		\centering
		\[ \def\arraystretch{1.2}
		\begin{array}{|c||ccc|ccc|}
		\hline
		s & \text{type } \tau(s) & f(s) && m(s) & M(s) & \Omega(s)\\
		\hline
		(0,0) & 1 & \text{n/a} && 23 & 25 & \mathcal{P}'(25)\\
		(0,*) & 3 & 2 && 24 & 24 & \mathcal{B}_{25}(24)\\
		(*,0) & 2 & 1 && 19 & 20 & \mathcal{B}_{25}(19)\sqcup\{(20,\mu)\mid \mu\in\mathcal{P}'(5)\}^\circ\\ 
		(*,*) & 4 & 1 && 19 & 20 & \mathcal{B}_{25}(19)\sqcup\{(20,\mu)\mid \mu\in\mathcal{B}_5(4) \}^\circ\\
		\hline
		\end{array}\]
		\caption{Data on $\Omega(\phi)$ for $\phi=\phi(s)\in\Lin(P_{25})$.}\label{table:n=25}
	\end{table}
	
	We can similarly determine $\Omega(\phi)$ explicitly for all $\phi\in\Lin(P_{p^2})$, for all $p\ge 5$.
	\medskip
	
	\noindent\textbf{(ii) $n=125$.} 
	The various $\Omega(s)$ are summarised in Table~\ref{table:n=125} below.
	
	\begin{table}[h]
		\centering
		\begin{small}
			\[ \def\arraystretch{1.2}
			\begin{array}{|c||ccccc|ccc|}
			\hline
			s & \tau(s) & f(s) & g(s) & \eta(s) && m(s) & M(s) & \Omega(s)\\
			\hline
			(0,0,0) & 1 & \text{n/a} & \text{n/a} & \text{n/a} && 123 & 125 & 
			\mathcal{P}'(125)\\
			(0,0,*) & 3 & 3 & \text{n/a} & \text{n/a} && 124 & 124 & \mathcal{B}_{125}(124)\\
			(0,*,0) & 2 & 2 & \text{n/a} & \text{n/a} && 119 & 120 &  \mathcal{B}_{125}(119)\sqcup \{(120,\mu)\mid \mu\in\mathcal{P}'(5) \}^\circ\\
			(0,*,*) & 4 & 2 & 3 & 119 && 119 & 120 & \mathcal{B}_{125}(119)\sqcup\{(120,\mu)\mid \mu\in\mathcal{B}_5(4) \}^\circ\\
			(*,0,0) & 2 & 1 & \text{n/a} & \text{n/a} && 99 & 100 & \mathcal{B}_{125}(99)\sqcup \{(100,\mu)\mid \mu\in\mathcal{P}'(25) \}^\circ\\
			(*,0,*) & 4 & 1 & 3 & 99 && 99 & 100 & \mathcal{B}_{125}(99)\sqcup \{(100,\mu)\mid\mu\in\mathcal{B}_{25}(24) \}^\circ\\
			(*,*,0) & 4 & 1 & 2 & 95 && 95 & 100 & \text{(see below)}\\
			(*,*,*) & 4 & 1 & 2 & 95 && 95 & 100 & \text{(see below)}\\
			\hline
			\end{array}\]
		\end{small}
		\caption{Data on $\Omega(\phi)$ for $\phi=\phi(s)\in\Lin(P_{125})$.}\label{table:n=125}
	\end{table}
	
	Recall from Theorem~\ref{thm: m and Omega} that we know $\Omega(s)$ exactly whenever $\tau(s)\ne 4$. We are able to determine $\Omega(s)$ completely for $s=(0,*,*)$ and $s=(*,0,*)$ even though $\tau(s)=4$ because $M(s)=m(s)+1$ in these cases, so the result follows from Lemma~\ref{lem: 9.8}.
	
	In the remaining instances when $\tau(s)=4$, i.e.~for $s=(*,*,0)$ and $s=(*,*,*)$, then $\mathcal{B}_{125}(95)\subseteq\Omega(s)\subseteq\mathcal{B}_{125}(100)$ and $\Omega(s)\setminus\mathcal{B}_{125}(95)$ contains no thin partitions, by Proposition~\ref{prop:case1}. (In other words, $\Omega(s)$ does not contain $(95+i,30-i)$, $(95+i,1^{30-i})$ or their conjugates for any $i\in[5]$.) Moreover,
	$$\Omega(*,*,x)\cap\{(100,\mu)\mid\mu\vdash 25 \}^\circ = \{(100,\mu)\mid \mu\in\Omega(*,x) \}^\circ$$ 
	for all $x\in\{0,1,\dotsc,4\}$ where $\Omega(*,x)$ has already been determined in (i) above.
	
	\medskip
	\noindent\textbf{(iii) $n=175$.} 
Since $175=5\63+2\cdot5\62$, each linear character $\phi(s)$ is labelled by a sequence 
$s=(s_1,s_2,s_3)$ where $s_1\in[\overline{5}]^3$ and $s_2,s_3\in[\overline{5}]^2$.
%
%
	To give some examples, we list $\Omega(s)$ when $s_1=(0,0,0)$ in Table~\ref{table:n=175} below.\hfill$\lozenge$
	
	\begin{table}[h]
		\centering
		\begin{small}
			\[ \def\arraystretch{1.2}
			\begin{array}{|c|c|c|c|}
			\hline
			s_2,\ s_3 & \tau(s_i)_{i=1,2,3} & N(s_i)_{i=1,2,3}; N(\phi) & \Omega(s)\\
			\hline
			(0,0), (0,0) & 1,1,1 & 125,25,25; 175 & \mathcal{P}(175)\\
			(0,0), (*,0) & 1,1,2 & 125,25,20; 170 & \mathcal{B}_{175}(169)\sqcup \{(170,\mu)\mid \mu\in\mathcal{P}'(5) \}^\circ\\
			(0,0), (0,*) & 1,1,3 & 125,25,24; 174 & \mathcal{B}_{175}(174)\\
			(*,0),(*,0) & 1,2,2 & 125,20,20; 165 & \mathcal{B}_{175}(165)\\
			(*,0),(0,*) & 1,2,3 & 125,20,24; 169 & \mathcal{B}_{175}(169)\\
			(0,*),(0,*) & 1,3,3 & 125,24,24; 173 & \mathcal{B}_{175}(173)\\
			(0,0), (*,*) & 1,1,4 & 125,25,19;169 & \mathcal{B}_{175}(169)\sqcup \{(170,\mu)\mid\mu\in\mathcal{B}_5(4)\}^\circ\\ 
			(*,0),(*,*) & 1,2,4 & 125,20,19; 164 & \mathcal{B}_{175}(164) \sqcup \{(165,\mu)\mid\mu\in\mathcal{B}_{10}(9) \}^\circ\\ 
			(0,*),(*,*) & 1,3,4 & 125,24,19; 168 & \mathcal{B}_{175}(168)\sqcup \{(169,\mu)\mid\mu\in\mathcal{B}_6(5) \}^\circ\\
			(*,*),(*,*) & 1,4,4 & 125,19,19; 163 & \mathcal{B}'\ \text{(defined below)} \\
			\hline
			\end{array}\]
			\[ \mathcal{B}' := \mathcal{B}_{175}(163) \sqcup \{(164,\mu)\mid\mu\in\mathcal{B}_{11}(10) \}^\circ \sqcup \{(165,\nu)\mid\nu\in\mathcal{B}_{10}(8) \}^\circ. \]
		\end{small}
		\caption{Data on $\Omega(\phi)$ for $\phi=\phi(s_1,s_2,s_3)\in\Lin(P_{175})$ with $s_1=(0,0,0)$. This follows from Theorem~\ref{thm:10.6} when $\tau(s_i)\ne 4$ for all $i$, and Lemma~\ref{lem: omega star omega} otherwise.}\label{table:n=175}
	\end{table}
\end{eg}

\end{document}